\numberwithin{equation}{section}
\tikzstyle{mybox} = [draw=black, very thick, diamond, inner ysep=5pt, inner xsep=5pt]
\tikzstyle{myboxS} = [draw=black, thick, rectangle, rounded corners, inner ysep=5pt, inner xsep=5pt]
\tikzstyle{legenda} = [draw=black, thick, rectangle, inner ysep=5pt, inner xsep=5pt]
\renewcommand{\email}[2][]{
\ifx\emails\@empty\relax\else{\g@addto@macro\emails{,\space}}\fi
\@ifnotempty{#1}{\g@addto@macro\emails{\textrm{(#1)}\space}}
\g@addto@macro\emails{#2}
}
\def\namedlabel#1#2{\begingroup
    #2%
    \def\@currentlabel{#2}%
    \phantomsection\label{#1}\endgroup
}
\theoremstyle{plain}
\newtheorem{theorem}{Theorem}[section]
\newtheorem{proposition}[theorem]{Proposition}
\newtheorem{corollary}[theorem]{Corollary}
\newtheorem{lemma}[theorem]{Lemma}
\theoremstyle{definition}
\newtheorem{assumption}[theorem]{Assumption}
\newtheorem{remark}[theorem]{Remark}
\newtheorem{example}[theorem]{Example}
\newtheorem{definition}[theorem]{Definition}
\newcommand{\la}{\langle}
\newcommand{\ra}{\rangle}
\newcommand{\mf}{\mathfrak}
\newcommand{\R}{\mathbb{R}}
\newcommand{\QW}{\mathbb{Q}}
\newcommand{\PW}{\mathbb{P}}
\newcommand{\Q}{\mathbb{Q}}
\newcommand{\E}{\mathbb{E}}
\newcommand{\tn}{\textnormal}
\newcommand{\ind}{\mathbf{1}}
\renewcommand{\P}{\mathbb{P}}
\newcommand{\Norm}{{\|\cdot\|}}
\newcommand{\N}{\mathbb{N}}
\newcommand{\Linfty}{{L^{\infty}_{\mathbf c}}}
\newcommand{\Linftyplus}{{L^{\infty}_{{\mathbf c}{\tiny +}}}}
\newcommand{\sca}{{sca_{\mathbf c}}}
\newcommand{\om}{\omega}
\newcommand{\CF}{\mathcal F}
\newcommand{\CA}{\mathcal A}
\newcommand{\ca}{{ca_{\mathbf c}}}
\newcommand{\CG}{\mathcal G}
\newcommand{\CE}{\mathfrak E}
\newcommand{\CX}{\mathcal X}
\newcommand{\CP}{\mathfrak P}
\newcommand{\CC}{\mathcal C}
\newcommand{\CY}{\mathcal Y}
\newcommand{\CL}{\mathcal L}
\newcommand{\CH}{\mathcal H}
\newcommand{\CB}{\mathcal B}
\newcommand{\CQ}{\mathfrak Q}
\newcommand{\CR}{\mathfrak R}
\newcommand{\CS}{\mathcal S}
\newcommand{\w}{\widehat}
\newcommand{\peq}{\preceq}
\newcommand{\mbf}{\mathbf}
\renewcommand{\la}{\langle}
\renewcommand{\ra}{\rangle}
\newcommand{\Lzeroplus}{L^0_{\mathbf{c}+}}
\newcommand{\mbb}{\mathbb}
\newcommand{\tle}{\trianglelefteq}
\newcommand{\Lzero}{{L^0_{\mbf c}}}
\title[Model Uncertainty: A reverse approach]{Model Uncertainty: A reverse approach}
\author[F.-B.~Liebrich]{Felix-Benedikt Liebrich}
\address{Institute of Actuarial and Financial Mathematics \& House of Insurance,\\
Leibniz Universit\"at Hannover, Germany}
\email{felix.liebrich@insurance.uni-hannover.de}
\author[M.~Maggis]{Marco Maggis}
\address{Department of Mathematics,\\
University of Milan, Italy}
\email{marco.maggis@unimi.it}
\author[G.~Svindland]{Gregor Svindland}
\address{Institute of Actuarial and Financial Mathematics \& House of Insurance,\\
Leibniz Universit\"at Hannover, Germany}
\email{\\gregor.svindland@insurance.uni-hannover.de}
\date{January 1, 2022}
\begin{document}

\parindent 0em \noindent

\begin{abstract} Robust models in mathematical finance replace the classical single probability measure by a sufficiently rich set of probability measures on the future states of the world to capture (Knightian) uncertainty about the ``right" probabilities of future events. If this set of measures is \textit{nondominated}, many results known from classical \textit{dominated} frameworks cease to hold as probabilistic and analytic tools crucial for the handling of dominated models fail. We investigate the consequences for the robust model when prominent results from the mathematical finance literature are postulate. In this vein, we categorise the Kreps-Yan property, robust variants of the Brannath-Schachermayer Bipolar Theorem, Fatou representations of risk measures,  and aggregation in robust models.

\smallskip

\noindent{\bf 2020 Mathematics Subject Classification:} 28A12, 46E30, 91G80

\smallskip

\noindent\textbf{Keywords:} Robust mathematical finance, quasi-sure analysis, supported uncertainty, aggregation of random variables in robust models
\end{abstract}

\maketitle

\section{Introduction}

This paper studies questions related to model uncertainty in nondominated frameworks. Model uncertainty or \textit{Knightian uncertainty} refers to situations in which the mechanism behind the realisation of economic or financial outcomes is ambiguous or impossible to be known.
Mathematically, these are usually modelled by a nonempty set $\CP$ of probability measures on a measurable space $(\Omega,\CF)$ of relevant future states which can be classified as follows: 
\begin{enumerate}[(a)]
\item \textsf{Pure risk:} $\CP=\{\P\}$ is a singleton. 
\item\textsf{Dominated uncertainty:} The set $\CP$ is dominated by a single reference probability measure $\P$ on $(\Omega,\CF)$, i.e.\ each $\P$-null event $N\in\CF$ satisfies $\sup_{\Q\in\CP}\Q(N)=0$.
\item\textsf{Nondominated uncertainty:} There is no dominating probability measure for $\CP$.
\end{enumerate}
Often models of nondominated uncertainty appear under the label {\em robust}; see \cite{BN, Burzoni, Nutz, STZ} and the references therein.
They are subject to crucial limitations though; as Bouchard \& Nutz \cite{BN} remark: 
\begin{quote}{\footnotesize The main difficulty in our endeavor is that [$\CP$] can be nondominated which leads to the failure of various tools of probability theory and functional analysis ... As a consequence, we have not been able to reach general results by using separation arguments in appropriate function spaces ...\quad\cite[p.\ 824]{BN}}\end{quote}
The mentioned probabilistic and analytic toolbox under dominated uncertainty tends to strongly rely on the continuity of a dominating probability measure $\PW$. Arguments based thereon facilitate many studies in financial mathematics. 
To give examples, law-invariant financial metrics such as Value-at-Risk fall in category (a) of pure risk, while financial applications that require a change of measure---as for instance in the context of the Fundamental Theorem of Asset Pricing (FTAP)---fall in category (b). In the latter case, the exhaustion principle is still applicable (see, e.g., the proof of the Halmos-Savage Theorem).  

The literature knows a number of prominent \textit{ad hoc} circumventions of the lack of a dominating measure in situation (c).
\begin{itemize}
\item Cohen's~\cite{Cohen} \textit{Hahn property}, a specific structure of the
underlying measure space in relation to the occurring probabilities; 
\item The assumption that the uncertainty structure is tree-like and that $\Omega$ is Polish, which in turn admits dynamic
programming and measurable selection arguments, see
\cite{Ba19,BZ,Car,BN,BN14}; 
\item Focusing on particularly well-behaved state spaces like the Wiener space, cf.\ \cite{NV,STZ11,STZ}.
\end{itemize}
This strand of literature imposes \textit{sufficient} conditions to guarantee that some nondominated robust model is a tractable setting for particular applications. In stark contrast, the present manuscript takes the opposite mathematical standpoint. Its \textit{reverse} perspective tackles the question which structural consequences for the triplet $(\Omega,\CF,\CP)$ can be derived from assuming that certain desired results from the mentioned probabilistic and analytic toolbox are available. In other words, the paper addresses the usual mathematical question of \textit{necessity} of sufficient conditions. 
We shall see that this leads to an additional subcategory (c') of nondominated uncertainty (c) which in this explicitness seems to be missing in the mathematical finance literature so far.
\begin{enumerate}
\item[(c')]\textsf{Supported uncertainty:} There is an alternative set $\CQ$ of probability measures on $(\Omega,\CF)$ which is equivalent to $\CP$ (an event $N\in\CF$ is {\em $\CP$-polar}, i.e.\ $\sup_{\P\in\CP}\P(N)=0$, if and only if it is $\CQ$-polar, i.e.\ $\sup_{\Q\in\CQ}\Q(N)=0$), and such that each measure $\Q\in\CQ$ is {\em supported}.
\end{enumerate}
The supports required in (c') are not to be understood in a statewise or topological, but in an order sense (cf.\ Definition~\ref{def:support}), and they are only unique up to $\CP$-polar events. We shall say that a set $\CP$ as described in (c') is \textit{of class }(S), while the alternative set of supported probability measures $\CQ$ is a \textit{supported alternative} to $\CP$. 

Results that are well known in situations (a)--(b) above and whose robust counterparts we shall study from a reverse perspective encompass the 
\begin{itemize}
    \item \textsf{Kreps-Yan property} (essential for the FTAP and studied in Section~\ref{sec:dominated})
    \item \textsf{Brannath-Schachermayer Bipolar Theorem} (see Section~\ref{sec:BS}, used in the literature for utility maximisation problems) \item \textsf{feasibility of aggregation procedures} (important for superhedging, dynamic arbitrage pricing, and utility maximization, see Sections~\ref{sec:aggregation:appl} and \ref{sec:logic:cor})
    \item \textsf{Fatou representations of convex monetary risk measures} (also relevant for the FTAP, see Section~\ref{sec:Grothendieck}).\end{itemize} 
This classification is an attempt at an axiomatic reply to the quoted question of \cite{BN} and can inform debates of specific models in robust finance.

Mathematically, our approach is closely related to \cite{surplus,Fatou} in its use of the so-called {\em $\CP$-quasi-sure ($\CP$-q.s.)\ order} as key. This order on function spaces is defined using the  upper envelope $\mbf c:=\sup_{\P\in\CP}\P(\cdot)$ of $\CP$.
While both $L^\infty_\PW$ equipped with the usual $\PW$-almost-sure order and its robust counterpart $\Linfty$ equipped with the $\CP$-q.s.\ order are Banach lattices, they differ substantially on the order level. 
A not overly deep, but crucial observation is the following. Consider \textit{any} measure $\mu\colon\CF\to[0,\infty]$ whose null sets are precisely the $\CP$-polar events. Such a measure always exists, for instance $\mu:=\sum_{\P\in\CP}\P$.\footnote{~$\mu:=\sum_{\P\in\CP}\P$ is defined by $\mu(A)=\sup\{\sum_{\P\in\CP'}\P(A)\mid \CP'\subset\CP\tn{ finite}\}$, $A\in\CF.$}
Then the $\CP$-q.s.\ and the \emph{$\mu$-almost-everywhere order} agree, and the robust space $\Linfty$ can always be seen as a classical $L^\infty_\mu$-space, where the measure $\mu$ is not $\sigma$-finite  if $\CP$ does not belong to category (b). While this point of view tends to be missing in the extant mathematical finance literature (whose natural objects of interest are probability measures), it has proved fruitful in robust statistics if the measure $\mu$ can be chosen to be reasonably well-behaved; cf.\ \cite{Luschgy}. 
Indeed, category (c') realises if $\mu$ and the alternative $\CQ$ to $\CP$ can be chosen such that each $\Q\in \CQ$ has a density with respect to $\mu$; cf.\ Lemma~\ref{lem:Luschgy}. 
The support of $\Q\in \CQ$ is the domain of positivity of that density. 
Being an alternative to $\CP$, the $\CQ$-q.s.\ and $\CP$-q.s.\ orders coincide. 

Intuitively, it is clear that the supportedness of each $\Q\in \CQ$ makes the $\CQ$-q.s.\ order, and thus the $\CP$-q.s.\ order, tractable in many ways. For instance, 
Section~\ref{sec:product} explains that function spaces defined over a set $\CP$ of class (S) necessarily are an infinite Cartesian product.
 Section~\ref{vol:uncertainty} relates it to several economically motivated case studies drawn from the literature.
Moreover, it will turn out to be {\em necessary} for the validity of the aforementioned theorems in a robust setting. Not least, the class (S) property serves as a bracket for parts of the extant literature on robust models: For instance, the models in \cite{Cohen,STZ} are of class (S). 
Section~\ref{sec:aggregation:appl} will demonstrate that indeed any model $\CP$ which admits {\em aggregation}, as is the case in \cite{Cohen,STZ}, must be of class (S). Aggregation means that any consistent family of random variables $(X^\PW)_{\PW\in \CP}$, can be aggregated to a single random variable $X$ in the sense that $\PW(X=X^\PW)=1$, $\PW\in \CP$.
Aggregation is also closely related to the existence of essential suprema, i.e.\ \textit{Dedekind completeness} of $\Linfty$. 
We will also show that the standing assumption of 
\cite{surplus,Fatou} that $\ca^*=\Linfty$ (where $\ca$ denotes the space of finite signed measures whose total variation measures do not charge $\CP$-polar events) is unnatural in {\bf ZFC} and better replaced by $\CP$ being of class (S) and $\Linfty$ being Dedekind complete; cf.\ Section~\ref{sec:logic:cor}.

Concluding this discussion and anticipating our results in a nutshell, we shall establish the following equivalences:

\medskip

\begin{tabular}{ccc}
 Kreps-Yan property &$\iff$&$\CP$ dominated\\
  Robust Brannath-Schachermayer Bipolar Theorem &$\iff$&$\CP$ of class (S)\\
Essential suprema and aggregation&$\iff$&$\CP$ of class (S) \& $\Linfty$ Dedekind complete\\
Fatou representation of risk measures&$\iff$&$\CP$ of class (S) \& $\Linfty$ Dedekind complete
\end{tabular}

\medskip

In view of the importance of Dedekind completeness/aggregation, Appendix~\ref{sec:enlargements} focuses on the question whether aggregation can be made possible by \textit{relaxing} the notion of measurability and \textit{enlarging} the underlying $\sigma$-algebra $\CF$. It turns out that this is only the case under specific circumstances.

Finally, we like to draw attention to a strong link between the fields of robust finance and \textit{robust statistics} which to our knowledge has not been sufficiently addressed in the extant literature. The consideration of nondominated sets of probabilistic models relevant for a statistical experiment goes back at least to \cite{Burkholder}, 
extending the theory of sufficiency developed by Halmos \& Savage. 
Beside the groundbreaking contribution of \cite{Sufficiency}, we refer to a rich strand of literature given by, for instance, \cite{Ghoshetal,Perlman,LeCam,Pitcher,Roy,Torgersen}. In particular, \cite{Luschgy} and the monographs~\cite{Fremlin2,Fremlin3} serve as important references for the present paper.

The paper is structured as follows: Section~\ref{sec:motivation} and Appendices~\ref{sec:prelim}--\ref{sec:appendix} contain explanatory and technical material. Section~\ref{sec:classS} discusses the class (S) property thoroughly, and Section~\ref{vol:uncertainty} illustrates its occurence in several economic case studies. 
Sections \ref{sec:BS} and \ref{sec:weak*} contain the characterisation of the robust counterparts of the well-known tools from mathematical finance mentioned above.  

\medskip

\section{Some notation and quasi-sure orders}\label{sec:motivation}

We shall rely heavily on lattice theory. A very brief overview is given in Appendix~\ref{sec:prelim}. For more information we refer to the monographs \cite{Ali,AliBurk, AliBurk2,MeyNie}.

\smallskip

\textsf{Set functions:} Throughout the paper $(\Omega,\CF)$ denotes an arbitrary measurable space. By $ba$ we denote the real vector space of all additive set functions $\mu\colon\CF\to\R$ with bounded total variation norm denoted by $TV$. $ba$ is a vector lattice when endowed with the setwise order: for $\mu,\nu\in ba$, $\mu\peq_\CF\nu$ holds if, for all $A\in\CF$, $\mu(A)\leq \nu(A)$. The triplet $(ba, \peq_\CF,TV)$ is a Banach lattice.

Given nonempty subsets $\mf S$ and $\mf T$ of $ba$, we say $\mf T$ \emph{dominates} $\mf S$ ($\mf S\ll\mf T$) if for all $N\in\CF$ satisfying $\sup_{\nu\in\mf T}|\nu|(N)=0$ we have $\sup_{\mu\in\mf S}|\mu|(N)=0$.
Here and in the following, $|\mu|\in ba_+$ denotes the total variation of $\mu$ with respect to $\peq_\CF$,
\[|\mu|(A)=\sup\{\mu(B)-\mu(A\backslash B)\mid B\in\CF,~B\subset A\}.\]
$\mf S$ and $\mf T$ are \emph{equivalent} ($\mf T\approx \mf S$) if $\mf S\ll\mf T$ and $\mf T\ll\mf S$.
For the sake of brevity, for $\mu\in ba$ we shall write $\mf S\ll\mu$, $\mu\ll\mf T$, and $\mu\approx\mf S$ instead of $\mf S\ll\{\mu\}$, $\{\mu\}\ll\mf T$, and $\{\mu\}\approx\mf S$, respectively.
 
The vector space of all countably additive signed measures with finite total variation, $ca$, is the space of all $\mu\in ba$ such that, for any sequence $(A_i)_{i\in\N}\subset\CF$ of pairwise disjoint events, 
\[\mu\Big(\bigcup_{i=1}^\infty A_i\Big)=\sum_{i=1}^\infty\mu(A_i).\]
$\Delta(\CF)$ denotes the set of probability measures on $(\Omega,\CF)$ and the letters $\mf P$, $\mf Q$, and $\mf R$ are henceforth used to denote nonempty subsets of $\Delta(\CF)$. Fixing such a set $\mf P$ we write $\mbf c$ for the induced upper probability $\mbf c\colon\CF\to[0,1]$ defined by $\mbf c(A)=\sup_{\P\in\CP}\P(A)$. An event $A\in \CF$ is {\em $\CP$-polar} if $\mbf c(A)=0$.  A property holds {\em $\CP$-quasi surely} (q.s.) if it holds outside a $\CP$-polar event. We set ${ba_{\mbf c}}:=\{\mu\in ba\mid \mu\ll\CP\}$ and $\ca:=ca\cap{ba_{\mbf c}}$.
$\ca_+:=\{\mu\in\ca\mid 0\peq_\CF\mu\}$ denotes the set of all measures $\mu\ll\CP$. Both $ca$ and $\ca$ are $TV$-closed bands within $ba$. Hence, $(\ca, \peq_\CF,TV)$ is a Banach lattice in its own right.

\smallskip

\textsf{Function spaces:} Consider the $\R$-vector space $\CL^0:=\mathcal L^0(\Omega,\CF)$ of all real-valued random variables $f\colon\Omega\to\R$ as well as its subspace $\mathcal N:=\{f\in\CL^0\mid\mbf c(|f|>0)=0\}$. The quotient space $\Lzero:=\CL^0/\mathcal N$ contains equivalence classes $X$ of random variables up to $\CP$-q.s.\ equality  comprising \emph{representatives} $f\in X$. The space $\Lzero$ carries the \emph{$\CP$-quasi-sure order} as natural vector space order: $X,Y\in\Lzero$ satisfy $X\peq Y$ if, for all $f\in X$ and all $g\in Y$, $f\le g$ $\CP$-q.s.\ 
In order to facilitate notation we suppress the dependence of $\peq$ on $\mbf c$ or $\CP$.
Each measurable function $f$ induces an equivalence class $[f]\in {L^0_{\mbf c}}$.  $({L^0_{\mbf c}},\peq)$ is a vector lattice, and for $X,Y\in {L^0_{\mbf c}}$, $f\in X$, and $g\in Y$, the minimum $X\wedge Y$ is the equivalence class $[f\wedge g]$ generated by the pointwise minimum $f\wedge g$, whereas the maximum $X\vee Y$ is the equivalence class $[f\vee g]$ generated by the pointwise maximum $f\vee g$. 
For an event $A\in\CF$, $\chi_A$ denotes the indicator of the event while $\ind_A:=[\chi_A]$ denotes the generated equivalence class in ${L^0_{\mbf c}}$.

An important subspace of ${L^0_{\mbf c}}$ which we will study thoroughly is the space $\Linfty$ of equivalence classes of $\CP$-q.s.\ bounded random variables, i.e.
\[\Linfty:=\{X\in {L^0_{\mbf c}}\mid \exists\, m>0:~|X|\peq m\ind_\Omega\}.\]
It is an ideal in ${L^0_{\mbf c}}$ and even a Banach lattice when endowed with the norm
\[\|X\|_\Linfty:=\inf\{m>0\mid |X|\peq m\ind_\Omega\},\quad X\in\Linfty.\]
If $\CP=\{\P\}$ is given by a singleton, we write $L^0_\P$ and $L^\infty_\P$ instead of $L^0_{\mbf c}$ and $\Linfty$. Also, the quasi-sure order in this case is as usual called \textit{almost-sure} order, and properties hold $\P$-almost surely ($\P$-a.s.). The spaces $L^0_\mu$ and $L^\infty_\mu$ for general measures $\mu$ are defined analogously. $\Lzeroplus$ and $\Linftyplus$ denote the positive cones of $\Lzero$ and $\Linfty$, respectively. 
At last, for $\emptyset\neq\CC\subset {L^0_{\mbf c}}$ and $A\in\CF$, we write
\[\ind_A\CC:=\{\ind_AX=[\chi_Af]\mid X\in\CC,\, f\in X\}.\]

\smallskip

\textsf{Supported measures:} The existence of a \textit{support} of a measure $\mu\in \ca$ will play a key role in the development of the present paper.  This concept is also known in statistics, see \cite[Definition 1.1]{Ghoshetal}.

\begin{definition}\label{def:support}Let $\CP\subset\Delta(\CF)$ be nonempty. 
\begin{enumerate}[(1)]
\item
A measure $\mu$ on $(\Omega,\CF)$ is \emph{supported} if there is an event $S(\mu)\in\CF$ such that 
\begin{enumerate}[(a)]
\item $\mu(S(\mu)^c)=0$;
\item whenever $N\in\CF$ satisfies $\mu(N\cap S(\mu))=0$, then $N\cap S(\mu)$ is $\CP$-polar.   
\end{enumerate}
The set $S(\mu)$ is called the \textit{(order) support} of $\mu$.
\item A signed measure $\mu\in\ca$ is supported if its modulus $|\mu|$ with respect to the setwise order $\peq_\CF$ is supported. 
\item We set 
\[
\sca:=\{\mu\in{ca_{\mbf c}}\mid\mu\tn{ supported}\},
\]
the vector lattice of all supported signed measures in $\ca$, and $\sca_+:=\sca\cap\ca_+$. 
\end{enumerate}
\end{definition}

It is easy to verify that if two events $S, S'\in\CF$ satisfy conditions (a) and (b) in Definition~\ref{def:support}(1), then $\chi_S=\chi_{S'}$ $\CP$-q.s., i.e.\ the symmetric difference $S\triangle S'$ is $\CP$-polar. 
The order support $S(\mu)$ is usually not unique as an event, but only unique up to $\CP$-polar events. In the following, $S(\mu)$ therefore denotes an arbitrary version of the order support. We emphasise that order supports are an order-theoretic concept and may not agree with the topological support of a measure as defined in~\cite[p.\ 441]{Ali}.

As illustration, consider a dominated set $\CP\subset\Delta(\CF)$. Let $\P^*\approx\CP$ be a probability measure. In particular, the $\CP$-q.s.\ and the $\P^*$-a.s.\ order agree. Then each $\P\in\CP$ is supported with $S(\P)=\{f>0\}$, where $f\in\tfrac{d\P}{d\P^*}$ is an arbitrary version of the Radon-Nikodym derivative $\frac{d\P}{d\P^*}$.  We shall later see that the existence of supports is in fact related to a generalisation of domination called \textit{majorisation}, cf.\ Definition~\ref{def:majorised}.

\smallskip

As mentioned in the introduction, we freely use results from mathematical statistics. There, the triplet $\CE:=(\Omega,\CF,\CP)$ is referred to as \textit{experiment}. Nondominated statistical experiments are comprehensively studied in \cite{Burkholder,Ghoshetal,Luschgy}, the references therein, or in the monographs~\cite{LeCam,Torgersen}, and we shall draw from these findings throughout the paper. 

\medskip

\section{Supported uncertainty}\label{sec:classS}

In this section we will introduce supported uncertainty, the so-called class \tn{(S)} property of $\CP$; see Definition~\ref{def:classS} below. 
Note, however, that usually not all measures are supported in the nondominated case. \begin{example}\label{ex:Lebesgue}
Consider the unit interval $\Omega=[0,1]$ equipped with the Borel-$\sigma$-algebra $\CF={\mathcal B}([0,1]))$. Let $\CP=\{\delta_\om\mid \om\in [0,1]\}$ be the set of all Dirac measures. Then the $\CP$-q.s.\ order is simply the pointwise order on $\Linfty$. However, the Lebesgue measure $\lambda$ is not supported: for any $S\in {\mathcal B}([0,1])$ with $\lambda(S^c)=0$ and for any countable subset $\emptyset\neq N\subset S$ we have $\lambda(N)=0$, but $\ind_N\neq 0$ in ${L^0_{\mbf c}}$.
\end{example}

\subsection{The class \tn{(S)} property}
We begin with the following simple fact about supportedness.

\begin{lemma}\label{lem:simple}Let $\CP\subset\Delta(\CF)$ be nonempty and let $\mu\in ca_+$. 
\begin{itemize}
\item[\tn{(1)}]$\mu$ is supported if and only if there is an event $S\in\CF$ such that $\mu(S^c)=0$ and $\ind_S\peq \ind_A$ in ${L^0_{\mbf c}}$ holds for all $A\in\CF$ with the property $\mu(A^c)=0$. In that case, $S$ is a version of $S(\mu)$.
\item[\tn{(2)}]Set $\CC:=\{\ind_A\mid A\in\CF,\, \mu(A^c)=0\}\subset \Linfty$. If $\inf\CC$ exists in $\Linfty$, then there is an event $S\in\CF$ such that 
\[\ind_{S}=\inf\CC.\]
In that case, $\mu$ is supported if and only if $S$ satisfies $\mu(S^c)=0$. 
\end{itemize}
\end{lemma}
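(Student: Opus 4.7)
For part (1), I would directly unpack Definition~\ref{def:support} via the dictionary ``$\ind_S\peq\ind_A$ in $\Lzero$ if and only if $S\setminus A$ is $\CP$-polar.'' In the ``only if'' direction, take $S:=S(\mu)$: condition (a) gives $\mu(S^c)=0$, and for any $A\in\CF$ with $\mu(A^c)=0$, setting $N:=A^c$ yields $\mu(N\cap S)\le\mu(A^c)=0$, so condition (b) forces $N\cap S=S\setminus A$ to be $\CP$-polar. For the ``if'' direction, (a) is inherited from $\mu(S^c)=0$; for (b), given $N\in\CF$ with $\mu(N\cap S)=0$, the choice $A:=(N\cap S)^c$ satisfies $\mu(A^c)=0$, so $\ind_S\peq\ind_A$ translates to $S\cap N=S\setminus A$ being $\CP$-polar. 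Uniqueness of $S(\mu)$ up to $\CP$-polar symmetric difference, recorded immediately after Definition~\ref{def:support}, identifies the produced $S$ as a version of $S(\mu)$.

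For part (2), the substantive step is to show that the infimum, whenever it exists in $\Linfty$, is the class of an indicator. First observe that $\CC$ is closed under finite meets: for $\ind_A,\ind_B\in\CC$, the identity $\ind_A\wedge\ind_B=\ind_{A\cap B}$ combined with $\mu((A\cap B)^c)\le\mu(A^c)+\mu(B^c)=0$ yields $\ind_{A\cap B}\in\CC$. Set $X:=\inf\CC$; since $\ind_\Omega\in\CC$ and $0$ is a lower bound of the non-negative family $\CC$, one has $0\peq X\peq\ind_\Omega$, so I may pick a representative $f\in X$ with $0\le f\le 1$ everywhere (truncating on a $\CP$-polar exceptional set if necessary). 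Let $S:=\{f>0\}\in\CF$ and $g:=\chi_S$. Since $f\le 1$ pointwise, $g\ge f$ pointwise, hence $X\peq\ind_S$. Conversely, for each $\ind_A\in\CC$ the inequality $f\le\chi_A$ $\CP$-q.s.\ forces $S\setminus A$ to be $\CP$-polar, so $\ind_S\peq\ind_A$; thus $\ind_S$ is a lower bound of $\CC$, giving $\ind_S\peq X$. Combining, $X=\ind_S$.

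The equivalence in the second assertion of (2) then follows from (1). If $\mu$ is supported, part (1) ensures that $\ind_{S(\mu)}\in\CC$ is a lower bound of $\CC$, whence $\ind_{S(\mu)}=\inf\CC=\ind_S$; since $\mu\in\ca_+$, the $\CP$-polarity of $S\triangle S(\mu)$ entails $\mu(S^c)=\mu(S(\mu)^c)=0$. Conversely, if $\mu(S^c)=0$ then $\ind_S\in\CC$ with $\ind_S\peq\ind_A$ for every $\ind_A\in\CC$, so part (1) gives that $\mu$ is supported with support $S$. The only mildly delicate point in the entire argument is the extraction of a $\{0,1\}$-valued representative of $\inf\CC$ in part (2), and this is resolved cleanly by the level-set construction $S=\{f>0\}$; beyond that, both assertions are routine translations between Definition~\ref{def:support} and the order on $\Lzero$.
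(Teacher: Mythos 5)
Your argument is correct and follows essentially the same route as the paper's. Part (1) is the paper's proof in substance: both directions reduce to the dictionary between $\ind_S\peq\ind_A$ and $\CP$-polarity of $S\setminus A$, with only a cosmetically different choice of test set in the converse (you take $A=(N\cap S)^c$ where the paper works with $S\setminus N$). In part (2) you differ in one respect: the paper outsources the fact that $\inf\CC$, when it exists, is an indicator to the auxiliary Lemma~\ref{lem:sets}, which is proved there by a supremum/complementation argument using the identity $U=nU\wedge\ind_\Omega$; your direct level-set construction $S=\{f>0\}$ for a $[0,1]$-valued representative $f$ of $\inf\CC$ is a clean, slightly more elementary substitute (the observation that $\CC$ is closed under finite meets is never used and can be dropped). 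The one point to flag is your appeal to ``$\mu\in\ca_+$'': the hypothesis is only $\mu\in ca_+$, so $\mu$ need not vanish on $\CP$-polar sets, and for a \emph{fixed} version $S$ of $\inf\CC$ the implication ``$\mu$ supported $\Rightarrow\mu(S^c)=0$'' can genuinely fail (take $\Omega=\{1,2\}$, $\CP=\{\delta_1\}$, $\mu=\delta_2$, $S=\emptyset$). The statement must be read existentially in $S$, and the fix is immediate: having shown $\ind_{S(\mu)}=\inf\CC$, the event $S(\mu)$ itself is an admissible choice of $S$ and satisfies $\mu(S(\mu)^c)=0$ by Definition~\ref{def:support}. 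The paper's own proof is equally terse at this point, so this is a refinement of the formulation rather than a defect specific to your argument.
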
 
\begin{proof}\
\begin{enumerate}[(1)]
\item Suppose that $\mu$ is supported as defined in Definition~\ref{def:support}. Then any version $S(\mu)$ of that support satisfies $\mu(S(\mu)^c)=0$. Let $A\in\CF$ such that $\mu(A^c)=0$. Then $\mu(A^c\cap S(\mu))=0$ and thus $A^c\cap S(\mu)$ is $\CP$-polar. Therefore $$\ind_{S(\mu)} = \ind_{A \cap S(\mu)} + \ind_{A^c\cap S(\mu)} = \ind_{A \cap S(\mu)} \peq  \ind_{A}.$$ Conversely, let $S\in\CF$ be as described. Suppose $N\in\CF$ satisfies $\mu(N\cap S)=0$. 
As $\mu(N\cup S^c)=0$ and thus $\ind _S\peq \ind_{S\setminus N}$ by assumption on $S$ we infer that $\ind_{S}=\ind_{S\backslash N}$. Hence,  $\ind_{N\cap S}=\ind_{S}-\ind_{S\backslash N}=0$. By Definition~\ref{def:support}(1), $S$ is a version of the support.
\item The existence of the set $S\in\CF$ is a direct consequence of Lemma~\ref{lem:sets}. 
If $\mu$ is supported and $S(\mu)$ is its support, then $\ind_{S(\mu)}\peq \inf \CC=\ind_S$ holds by (1). As $\ind_{S(\mu)}\in\CC$, we also have $\ind_S\peq\ind_{S(\mu)}$. Hence, $\ind_S=\ind_{S(\mu)}$ has to hold. Conversely, if $\mu(S^c)=0$, $S$ is the support of $\mu$ by (1).
\end{enumerate}
\end{proof}

Example \ref{ex:Lebesgue} suggests that asking for supportedness of all measures in $\ca$ is a requirement which is too strong in the context of nondominated models. We thus introduce the weaker class (S) property of $\CP$.

\begin{definition}\label{def:classS}
Let $\CP\subset\Delta(\CF)$ be nonempty. 
$\CP$ is \emph{of class \tn{(S)}} if there is an equivalent set of probability measures $\CQ\approx\CP$ such that each $\QW\in \CQ$ is supported, i.e.\ $\CQ\subset \sca$. In that case we call $\CQ$ a \textit{supported alternative} to $\CP$.
\end{definition}

As mentioned above, the class (S) property is closely related to a generalised form of domination called majorisation: 

\begin{definition}\label{def:majorised} Let $\CP\subset\Delta(\CF)$ be nonempty.
$\CP$ is \emph{majorised} by a (not necessarily $\sigma$-finite) measure $\mu$ on $(\Omega,\CF)$ if each $\P\in\CP$ has a $\mu$-density, i.e.\ for all $\P\in\CP$ there is a non-negative measurable function $f\colon\Omega\to[0,\infty)$ such that 
\[\forall\,A\in\CF:~\P(A)=\int_A f\,d\mu.\]
\end{definition}

\begin{remark}
 The class (S) property is stable under equivalence. Indeed, if $\CP$ is of class (S) and another set $\CP'\subset\Delta(\CF)$ satisfies $\CP\approx\CP'$, then $\CP'$ is of class (S) as well.
 This implication does not hold for the property of majorisation: $\CP$ being majorised does {\em not} imply that $\CP'$ is majorised. 
 This observation is also motivated by the tendency of mathematical finance applications to work on equivalence classes of probability measures.
\end{remark}

The link between the class (S) property and majorisation is the following: 

\begin{lemma}\label{lem:Luschgy}
For nonempty $\CP\subset\Delta(\CF)$, the following are equivalent: 
\begin{itemize}
\item[\tn{(1)}] $\CP$ is of class \tn{(S)}.
\item[\tn{(2)}] There is an equivalent set of probability measures $\CQ\approx\CP$ which is majorised. 
\end{itemize}
If $\CQ\subset\Delta(\CF)$ is as in \tn{(2)}, $\sca$ is the band generated by $\CQ$ both in ${ca_{\mbf c}}$ and in $ca$. 
\end{lemma}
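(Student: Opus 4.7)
I plan to treat the equivalence and the identification of $\sca$ with $\band(\CQ)$ separately.

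Direction $(2) \Rightarrow (1)$ is a direct verification. Given $\CQ \approx \CP$ majorised by $\mu$ with $\Q$-densities $g_\Q$, I set $S(\Q) := \{g_\Q > 0\}$. Then $\Q(S(\Q)^c) = \int_{\{g_\Q = 0\}} g_\Q\,d\mu = 0$ gives Definition~\ref{def:support}(1)(a). For (b), if $\Q(N \cap S(\Q)) = 0$ then $\int_{N \cap \{g_\Q > 0\}} g_\Q\,d\mu = 0$ forces $\mu(N \cap S(\Q)) = 0$, whence $\Q'(N \cap S(\Q)) = \int_{N \cap S(\Q)} g_{\Q'}\,d\mu = 0$ for every $\Q' \in \CQ$; the equivalence $\CQ \approx \CP$ then yields $\CP$-polarity of $N \cap S(\Q)$.

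Direction $(1) \Rightarrow (2)$ is the main work, and I expect the subtlety to lie here. Starting from a class~\tn{(S)} alternative $\CQ$, I would invoke Zorn's lemma on the family of all collections $\mathcal{R} \subseteq \sca_+ \cap \Delta(\CF)$ of supported probabilities whose supports are pairwise essentially disjoint, i.e.\ $\ind_{S(\nu)} \wedge \ind_{S(\nu')} = 0$ in $\Linfty$ for distinct $\nu, \nu' \in \mathcal{R}$. Let $\mathcal{R}^*$ be maximal. The crucial step is to show $\mathcal{R}^* \approx \CP$: suppose, for contradiction, that some $N \in \CF$ is not $\CP$-polar yet $\mathcal{R}^*$-polar. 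Choose $\Q \in \CQ$ with $\Q(N) > 0$ and set $T := N \cap S(\Q)$; then $\Q_T := \Q|_T / \Q(T)$ is a supported probability with support $T$ (by an argument mirroring Lemma~\ref{lem:simple}). $\mathcal{R}^*$-polarity of $N$ gives $\nu(N \cap S(\nu)) = 0$ for each $\nu \in \mathcal{R}^*$, and supportedness of $\nu$ promotes $N \cap S(\nu)$, hence $T \cap S(\nu)$, to $\CP$-polarity. So $\mathcal{R}^* \cup \{\Q_T\}$ still lies in the disjointness family, contradicting maximality. Finally, set $\theta := \sum_{\nu \in \mathcal{R}^*} \nu$. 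For $\nu \neq \nu_0$ in $\mathcal{R}^*$, disjointness of supports combined with $\nu \in \ca$ yields $\nu(S(\nu_0)) = \nu(S(\nu) \cap S(\nu_0)) = 0$, so $\int_A \chi_{S(\nu_0)}\,d\theta = \theta(A \cap S(\nu_0)) = \nu_0(A)$. Thus $\chi_{S(\nu_0)}$ is a $\theta$-density of each $\nu_0 \in \mathcal{R}^*$, and $\CQ' := \mathcal{R}^*$ furnishes the required majorised alternative.

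For the band identification, assume $\CQ$ is as in (2), so $\CQ \subseteq \sca$ by $(2) \Rightarrow (1)$. I first show that $\sca$ is an ideal in $\ca$ (hence in $ca$): if $\mu \in \sca$ has support $S(\mu)$ and $|\nu| \leq |\mu|$, Radon-Nikodym on the finite measure $|\mu|$ produces a density $h$ of $|\nu|$, and $S(\nu) := \{h > 0\} \subseteq S(\mu)$ serves as a support for $\nu$ by a computation parallel to $(2) \Rightarrow (1)$. Sequential order-closedness follows by taking $S(\nu) := \bigcup_n S(\nu_n)$ for $\nu_n \uparrow \nu$ in $\sca_+$; the countable union is $\CF$-measurable, and the verification of Definition~\ref{def:support} is routine. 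Since $\ca$ and $ca$ are KB-spaces, this suffices for $\sca$ to be a band, and $\band(\CQ) \subseteq \sca$.

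For the reverse inclusion, I exploit Dedekind-completeness of $ca$: band-decompose any $\nu \in \sca$ as $\nu = \nu_1 + \nu_2$ with $\nu_1 \in \band(\CQ)$ and $\nu_2 \in \CQ^d$. The ideal property forces $\nu_2 \in \sca$. For two supported measures, disjointness reduces to the support condition $\nu_2 \perp \Q \iff \Q(S(\nu_2)) = 0$; hence $S(\nu_2)$ is $\CQ$-polar, and the equivalence $\CQ \approx \CP$ makes it $\CP$-polar. Since $\nu_2 \in \ca$ is concentrated on $S(\nu_2)$, this forces $\nu_2 = 0$ and $\nu \in \band(\CQ)$. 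The same statement for $\band(\CQ)$ computed in $\ca$ follows from $\ca$ being a band in $ca$ with $\sca \subseteq \ca$. The chief obstacle is the Zorn construction in $(1) \Rightarrow (2)$, which must simultaneously secure pairwise disjoint supports and preserve equivalence with $\CP$; the auxiliary construction of $\Q_T$ is what makes this possible.
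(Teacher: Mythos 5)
Your proof is correct, but it takes a genuinely different route from the paper's. The paper disposes of the equivalence and the band identity almost entirely by citation: the fact that a family of probability measures is majorised if and only if each member is supported is taken from Ghosh--Morimoto--Yamada and Luschgy--Mussmann, and the identity $\sca=\band(\CQ)$ is read off from \cite[Theorem 1]{Luschgy}. You instead reconstruct the underlying arguments from scratch: your Zorn construction of a maximal pairwise-disjoint system $\mathcal R^*$ and the majorant $\theta=\sum_{\nu\in\mathcal R^*}\nu$ is essentially the content of the cited statistics results (and anticipates the paper's own Lemma~\ref{lem:disjoint:sup:alt}), while your band argument --- $\sca$ is a $\sigma$-order-closed ideal, hence a band in the AL-space $ca$, plus the band decomposition $\nu=\nu_1+\nu_2$ with $\nu_2\perp\CQ$ forced to vanish --- replaces the appeal to \cite[Theorem 1]{Luschgy}. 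What your approach buys is self-containedness; what it gives up is that you do not show $\CQ$ itself is majorised (only that the possibly different set $\mathcal R^*$ is), but statement (2) only asks for \emph{some} equivalent majorised set, so this is harmless. Two small points of hygiene: in the ideal step you should take $S(\nu):=\{h>0\}\cap S(\mu)$ (or fix the version of $h$ to vanish off $S(\mu)$), since for an arbitrary version of $h$ the set $\{h>0\}\cap S(\mu)^c$ is $|\mu|$-null but need not be $\CP$-polar, so condition (b) of Definition~\ref{def:support} could fail for $\{h>0\}$ itself; and the passage from sequential order closedness to bandhood does rely on the order continuity of the $TV$-norm on $ca$, which you correctly flag via the KB-space property.
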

\begin{proof}
The equivalence of (1) and (2) follows from the fact that $\CQ$ is majorised if and only if each $\QW\in\CQ$ is supported, see \cite[Remark 1.1(a)]{Ghoshetal} or \cite[Theorem 1]{Luschgy}. 
We now prove the last assertion. To this end, consider the ambient space $ca$. Let $\mbf{\widetilde c}$ denote the upper probability generated by $\CQ$. By \cite[Theorem 1]{Luschgy},  
\[\tn{band}(\CQ)=\{\mu\in{ca_{\mbf{\widetilde c}}}\mid|\mu|\tn{ has an order support}\}=sca_{\mbf{\widetilde c}},\]
where band$(\CQ)$ denotes the band generated by $\CQ$ in $ca$. 
As $\CQ\approx\CP$, we have both $ca_{\mbf{\widetilde c}}=ca_{\mbf{c}}$ and  $sca_{\mbf{\widetilde c}}=\sca$, and the claimed identity is proved. Now ${ca_{\mbf c}}$ is a band in $ca$ itself (\cite[Lemma 1]{Luschgy}) and $\CQ\subset\sca\subset{ca_{\mbf c}}$. Hence, the same equality holds for the band in ${ca_{\mbf c}}$ generated by $\CQ$.
\end{proof}

Given nonempty $\CP\subset\Delta(\CF)$,a subset $\mf T\subset\sca_+$ is a \emph{maximal disjoint system} if 
\begin{enumerate}[(i)]
\item $0\notin\mf T$, 
\item for all $\mu,\nu\in\mf T$ with $\mu\neq \nu$ we have that $\mu\wedge\nu=0$,
\item $\nu=0$ whenever $\nu\in\sca_+$ satisfies $\mu\wedge\nu=0$ for all $\mu\in\mf T$.
\end{enumerate}
If $\sca$ is nontrivial---for instance, if $\CP$ is of class (S) (Lemma~\ref{lem:Luschgy})---a maximal disjoint system in $\sca_+$ exists by Zorn's Lemma. 
Such maximal disjoint systems play a fundamental role for the results of \cite{Luschgy}, and we will use them in the guise of the next lemma.

\begin{lemma}\label{lem:disjoint:sup:alt}
Suppose $\CP$ is of class \tn{(S)}. Then there exists a supported alternative $\CQ\approx\CP$ such that, for all $\QW\neq\QW'$, $\QW\wedge\QW'=0$ holds in $\ca$.
Equivalently, $\ind_{S(\QW)}\wedge\ind_{S(\QW')}=0$ in $\Linfty$. 
\end{lemma}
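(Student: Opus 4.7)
The plan is to construct $\CQ$ by Zorn's Lemma applied to the ordered cone $\sca_+$. Since $\CP$ is of class (S), Lemma~\ref{lem:Luschgy} guarantees $\sca\neq\{0\}$, so a maximal disjoint system $\mathfrak{T}\subset\sca_+$ exists. Each $\mu\in\mathfrak{T}$ is nonzero and positive, hence $\mu(\Omega)>0$, and $\tilde\mu:=\mu/\mu(\Omega)$ is a supported probability measure (supportedness and pairwise disjointness are both preserved under positive scaling). I would then set $\CQ:=\{\tilde\mu\mid \mu\in\mathfrak{T}\}\subset\sca\cap\Delta(\CF)$.

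The main task is to verify $\CQ\approx\CP$. Since $\CQ\subset\sca\subset\ca$, every $\QW\in\CQ$ satisfies $\QW\ll\CP$, so any $\CP$-polar event is $\CQ$-polar. For the converse, let $N\in\CF$ be $\CQ$-polar, i.e.\ $\mu(N)=0$ for every $\mu\in\mathfrak{T}$. For an arbitrary $\sigma\in\sca_+$, define the restriction $\sigma|_N\in ca_+$ by $\sigma|_N(A):=\sigma(A\cap N)$. Then $0\peq_\CF\sigma|_N\peq_\CF\sigma$, and since $\sca$ is a band in $\ca$ by Lemma~\ref{lem:Luschgy}, $\sigma|_N\in\sca_+$. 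For each $\mu\in\mathfrak{T}$, the measure $\sigma|_N$ is concentrated on $N$ while $\mu(N)=0$; hence $\sigma|_N$ and $\mu$ are mutually singular, so $\sigma|_N\wedge\mu=0$ in $\ca$. By maximality of $\mathfrak{T}$, this forces $\sigma|_N=0$, i.e.\ $\sigma(N)=0$. Applying this to every $\sigma\in\CQ_0$, where $\CQ_0\approx\CP$ is any supported alternative, shows that $N$ is $\CQ_0$-null and thus $\CP$-polar.

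Finally, the asserted equivalence between disjointness in $\ca$ and in $\Linfty$ is read off the support definition. If $\QW\wedge\QW'=0$ for distinct $\QW,\QW'\in\CQ$, mutual singularity yields $\QW(S(\QW'))=0$, in particular $\QW(S(\QW)\cap S(\QW'))=0$; Definition~\ref{def:support}(1)(b) then forces $S(\QW)\cap S(\QW')$ to be $\CP$-polar, so $\ind_{S(\QW)}\wedge\ind_{S(\QW')}=\ind_{S(\QW)\cap S(\QW')}=0$ in $\Linfty$. Conversely, if $\ind_{S(\QW)}\wedge\ind_{S(\QW')}=0$, then up to $\CP$-polar modification the supports can be taken disjoint; as each $\QW$ is concentrated on its support, the two measures become mutually singular and $\QW\wedge\QW'=0$.

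The main obstacle is the backward implication $\CQ\text{-polar}\Rightarrow\CP\text{-polar}$ in the middle paragraph. Everything hinges on recognising $\sigma|_N$ as an element of $\sca_+$ (which requires the band property established in Lemma~\ref{lem:Luschgy}) and confirming its disjointness from every element of the maximal system, at which point maximality finishes the argument. The remaining passages are essentially bookkeeping from definitions.
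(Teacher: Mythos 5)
Your proposal is correct and rests on the same construction as the paper: take a maximal disjoint system $\mf T\subset\sca_+$ (which exists by Zorn's Lemma since $\sca\neq\{0\}$ under class (S)) and normalise its elements to probability measures. Where you diverge is in what you choose to verify. The paper's proof is terse: it takes the equivalence $\CQ\approx\CP$ essentially for granted and spends its effort on the final assertion, deducing the equivalence of $\QW\wedge\QW'=0$ in $\ca$ with $\ind_{S(\QW)}\wedge\ind_{S(\QW')}=0$ in $\Linfty$ from carrier theory (order continuity of $\E_\QW[\cdot]$, the identification $C(\E_\QW[\cdot])=\ind_{S(\QW)}\Linfty$ from Proposition~\ref{prop:(A1)-(A3)}, and the disjointness criterion for order continuous functionals in Aliprantis--Burkinshaw). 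You instead prove $\CQ\approx\CP$ in full: the restriction $\sigma|_N$ of an arbitrary $\sigma\in\sca_+$ to a $\CQ$-polar set $N$ lies in $\sca_+$ by the band property from Lemma~\ref{lem:Luschgy}, is disjoint from every member of $\mf T$ by mutual singularity, and hence vanishes by maximality; running this over a supported alternative to $\CP$ forces $N$ to be $\CP$-polar. This is exactly the missing step, and your argument for it is sound. Your hands-on derivation of the final equivalence directly from Definition~\ref{def:support}(1)(b) (mutual singularity gives $\QW(S(\QW)\cap S(\QW'))=0$, the support property upgrades this to $\CP$-polarity of the intersection, and conversely) is an elementary substitute for the carrier-theoretic citation and is equally valid. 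In short: same skeleton, but your write-up closes a gap the paper leaves to the reader and trades an external lattice-theory reference for a direct computation.
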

\begin{proof}
Let $\mf T\subset\sca_+$ be a maximal disjoint system. Set $\CQ:=\{\mu(\Omega)^{-1}\mu\mid\mu\in\mf T\}$. For $\Q\in\CQ$, we consider the $\Q$-expectation $\E_\Q[\cdot]$ on $\Linfty$, which is order continuous by Proposition~\ref{prop:(A1)-(A3)}(3) and whose so-called \textit{carrier} is given by $C(\E_\Q[\cdot])=\ind_{S(\Q)}\Linfty$; cf.\ Appendix~\ref{sec:prelim} and Proposition~\ref{prop:(A1)-(A3)}(2). 
For $\Q,\Q'\in\CQ$ with $\Q\neq\Q'$, the fact that $\Q\wedge\Q'=0$ now is equivalent to $\ind_{S(\Q)}\wedge\ind_{S(\Q')}=0$ by 
 \cite[Theorem 1.81]{AliBurk}. 
\end{proof}
We will refer to $\CQ$ from Lemma~\ref{lem:disjoint:sup:alt} as a  \textit{disjoint supported alternative}. 
The existence of a disjoint supported alternative has appeared in \cite{Cohen} and studies based thereon as the so-called \textit{Hahn property} of $\CP$. 
A set $\CP$ of probability measures has the \textit{Hahn property} if $\CP$ is of class \tn{(S)} and there is a disjoint supported alternative $\CQ$ to $\CP$ with the property $\CF(\CQ)=\CF(\CP)$ which admits a family $(S(\Q))_{\Q\in\CQ}$ of {\em pairwise disjoint} versions of the associated supports. Here, we use the following notation: Given a set $\mf S$ of measures on $(\Omega,\CF)$, the \emph{$\mf S$-completion} of $\CF$ is the $\sigma$-algebra
\begin{equation}\label{eq:defenlarge}\CF(\mf S):=\bigcap_{\mu\in\mf S}\sigma(\CF\cup\mf n(\mu)),\end{equation}
where $\mf n(\mu):=\{A\subset\Omega\mid \exists\,N\in\CF:~\mu(N)=0,\, A\subset N\}$ denotes the $\mu$-negligible sets, $\mu\in\mf S$. 
A set $A\subset\Omega$ belongs to $\CF(\mf S)$ if and only if for all $\mu\in\mf S$ there is $B_\mu\in\CF$ such that $A\triangle B_\mu=(A\backslash B_\mu)\cup (B_\mu\backslash A)\in\mf n(\mu)$. 

The Hahn property of $\CP$ implies that $\CP$ is of class \tn{(S)} by definition. Conversely, Lemma~\ref{lem:disjoint:sup:alt} shows that if $\CP$ is of class \tn{(S)}, then $\CP$ satisfies a weak kind of Hahn property. However, $\CP$ being of class (S) does not imply the Hahn property (cf.\ \cite[216E]{Fremlin2}).
In the statistics literature, the Hahn property is closely related to \textit{decomposable, $\Sigma$-finite}, or \textit{$\Sigma$-dominated} experiments; cf.\ \cite[p.\ 185]{Luschgy} and \cite[p.\ 7]{Torgersen}.

\smallskip
 
\subsection{Examples for the class (S) property}\label{vol:uncertainty}

We shall now discuss and verify the class (S) property in a number of prominent case studies from mathematical finance. While the purpose is illustration, let us once again emphasise that we do not endorse the class (S) property as an axiom robust models {\em should} satisfy. We mostly identify it as a {\em consequence} of the validity of robust counterparts of well-known tools in mathematical finance. 

\smallskip

\subsubsection{Class (S), product spaces, and  financial models}\label{sec:product}
In two recent papers the FTAP and the pricing-hedging duality under uncertainty are approached both in discrete-time markets \cite{Ch20} and continuous-time markets with frictions \cite{CFR21}. 
Given a filtered probability space $(\Omega,\CF, (\CF_t)_{t\in\mathbb T},\PW)$, the price process at time $t\in\mathbb T$ is modelled as a potentially infinite-dimensional vector 
$$\mathbf{S}_t=(S_t^{\theta})_{\theta\in\Theta}\in \prod_{\theta\in \Theta} L^0_\P,$$
where $S_t^{\theta}$ is $\CF_t$-measurable for any $\theta$. 
$\Theta$ represents the set of parameters describing the uncertainty in the market model.  

We deem important to illustrate that this construction from \cite{Ch20,CFR21} indeed falls in the class (S) framework. To this end, one enlarges the underlying measurable structure and considers
$$\begin{array}{l}\widetilde\Omega:=\Omega\times\Theta\\
\Pi_{t}=\big\{A^{\theta}\times\{\theta\}\subset\widetilde\Omega\mid\theta\in\Theta,\, A^{\theta}\in \CF_{t}\big\}\cup\{\emptyset\}\quad(t\in\mathbb T),
\\ \Pi=\big\{A^{\theta}\times\{\theta\}\subset\widetilde\Omega\mid\theta\in\Theta,\, A^{\theta}\in \CF\big\}\cup\{\emptyset\}.\end{array}$$
$\Pi_{t},\Pi$ are $\pi$-systems generating $\sigma$-algebras $\widetilde{\CF}_{t}$ and $\widetilde{\CF}$, respectively.
Using Dynkin's $\pi$-$\lambda$ Theorem, we can define 
\[\P^\theta(B):=\P(\{\om\in\Omega\mid (\om,\theta)\in B\}),\quad B\in\widetilde\CF.\]
\begin{lemma} 
The family $\CP=\{\PW^{\theta}\mid \theta\in\Theta\}$ on $(\widetilde{\Omega},\widetilde{\CF})$ is of class \tn{(S)}. 
\end{lemma}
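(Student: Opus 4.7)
The plan is to take $\CQ := \CP$ itself as the supported alternative to $\CP$, which makes the equivalence $\CQ \approx \CP$ trivial and reduces the task to verifying that each $\P^\theta$ is supported on $(\widetilde{\Omega},\widetilde{\CF})$. The natural candidate for the support of $\P^\theta$ is the slice $S(\P^\theta) := \Omega\times\{\theta\}$. First I would observe that this set lies in $\widetilde{\CF}$: since $\Omega\in\CF$, the set $\Omega\times\{\theta\}$ belongs to $\Pi$ by definition, hence to $\widetilde{\CF}$.

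Condition (a) of Definition~\ref{def:support}(1) is automatic: the preimage $\{\omega\in\Omega\mid(\omega,\theta)\notin \Omega\times\{\theta\}\}$ is empty, so $\P^\theta((\Omega\times\{\theta\})^c)=\P(\emptyset)=0$. The main step is to verify condition (b). I would pick an arbitrary $N\in\widetilde{\CF}$ satisfying $\P^\theta(N\cap(\Omega\times\{\theta\}))=0$ and show that $N\cap(\Omega\times\{\theta\})$ is $\CP$-polar, i.e.\ $\P^{\theta'}(N\cap(\Omega\times\{\theta\}))=0$ for every $\theta'\in\Theta$. The case $\theta'=\theta$ is simply the hypothesis. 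For $\theta'\neq\theta$, the defining slice $\{\omega\in\Omega\mid(\omega,\theta')\in N\cap(\Omega\times\{\theta\})\}$ is empty, because any such pair $(\omega,\theta')$ would have to satisfy $\theta'=\theta$, a contradiction. Hence $\P^{\theta'}(N\cap(\Omega\times\{\theta\}))=\P(\emptyset)=0$, which gives $\mbf c(N\cap(\Omega\times\{\theta\}))=0$.

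There is essentially no obstacle: the argument is driven by the basic fact that each $\P^\theta$ is concentrated on a single slice $\Omega\times\{\theta\}$, and the slices corresponding to distinct parameters are pairwise disjoint. As a byproduct, the family $(S(\P^\theta))_{\theta\in\Theta}$ of supports is already pairwise disjoint, so $\CP=\CQ$ is in fact a \emph{disjoint} supported alternative in the sense of Lemma~\ref{lem:disjoint:sup:alt} (and, provided $\widetilde\CF=\widetilde\CF(\CP)$, even witnesses the Hahn property). The only implicit ingredient used is the well-definedness of each $\P^\theta$ on $\widetilde{\CF}$, but this is exactly what the Dynkin $\pi$-$\lambda$ construction preceding the statement of the lemma provides.
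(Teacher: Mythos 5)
Your proof is correct and follows exactly the route the paper takes (which it only sketches): the slices $\Omega\times\{\theta\}$ are the order supports, condition (a) is immediate, and condition (b) follows because the slices are pairwise disjoint, so every $\P^{\theta'}$ with $\theta'\neq\theta$ assigns measure zero to any subset of $\Omega\times\{\theta\}$. Your filled-in verification of (b) and the observation that $\CP$ is its own disjoint supported alternative are precisely what the paper's one-line argument relies on.
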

The proof of the previous lemma is straightforward and follows the idea adopted in~\cite[Corollary 5.7.14]{Torgersen}: indeed the measurable sets $\Omega^{\theta}=\Omega\times \{\theta\}$ are the supports of each measure $\PW^{\theta}$, as by definition $\PW^{\theta}(\Omega^{\theta})=1$ and $\Omega^{\theta}\cap \Omega^{\theta'}=\emptyset$ for $\theta\neq \theta'\in\Theta$.   

Hence, the financial model given by the filtered probability space $(\Omega,\CF,(\CF_t)_{t\in\mathbb T},\P)$, the parameter set $\Theta$, and the price process $\mathbf{S}$ can equivalently be replaced by the new  measurable space $(\widetilde\Omega,\widetilde\CF)$, the filtration $(\widetilde{\CF}_t)_{t\in\mathbb T}$, and the set $\CP=\{\P^\theta\mid\theta\in\Theta\}$ of relevant probability measures (and of course the price process is easily redefined on this new structure). 
The preceding construction is a special case of a procedure that is always possible in the class (S) framework. Recall from Lemma~\ref{lem:disjoint:sup:alt} that there is a disjoint supported alternative $\CQ$ to $\CP$. Consider the vector space
\begin{equation}\label{product space}\mathcal Y:=\big\{\mathbf X=(X_\Q)_{\Q\in\CQ}\big| \sup_{\Q\in\CQ}\|X_\Q\|_{L^\infty_\Q}<\infty\big\}\subset \prod_{\Q\in\CQ}L^\infty_\Q
\end{equation}
and set $\mathbf X\tle\mathbf Y$ if $X_\Q\le Y_\Q$ $\Q$-a.s.\ holds for all $\Q\in\CQ$. Then $(\CY,\tle)$ is in fact a vector lattice. 
If we define $j_\Q\colon\Linfty\to L^\infty_\Q$ by setting $j_\Q(X)$ to be the equivalence class generated in $L^\infty_\Q$ by any representative $f\in X$, we obtain a strictly positive lattice homomorphism
\[J\colon \Linfty\to\CY,\quad X\mapsto(j_\Q(X))_{\Q\in\CQ}.\]
Given that $\CQ$ is a disjoint supported alternative, one can verify that $J(\Linfty)$ is order dense and majorising in $\CY$, notions introduced in Appendix~\ref{sec:prelim}. 
In sum, if $\CP$ is of class (S), $\Linfty$ is lattice isomorphic to a subspace of a product space. If $\CP$ is not dominated, the latter has uncountably many coordinates. 

\smallskip

\subsubsection{Volatility uncertainty}\label{sec:voluncertainty}
In continuous-time financial models one of the most relevant sources of uncertainty is related to the estimation of the volatility of price processes. 
We illustrate here that the model of uncertain volatility discussed in \cite{Cohen,STZ} falls in the class (S) framework. To this end, let $\P_0$ be the Wiener measure on the Wiener space $\Omega$ of continuous functions $\omega\colon\R_+\to\R$ with $\om(0)=0$, so that the canonical process  $B:=(B_t)_{t\in\R_+}$ defined by $B_t(\om)=\om(t)$, $t\in\R_+$, $\om\in\Omega$, is a standard Brownian motion under $\P_0$ with respect to the natural filtration $\mathbb F=(\CF_t)_{t\ge 0}:=(\sigma(B_s\mid 0\leq s\leq t))_{t\ge 0}$. 
\cite{Ka95} proves that there is an $\mathbb F$-adapted process $\langle B\rangle$ such that under each probability measure $\P$ on $(\Omega,\CF)$ with respect to which $B$ is a local martingale, $\langle B\rangle$ agrees with the usual $\P$-quadratic variation of $B$ $\P$-a.s.\ 
Further, let $\CP^{obs}$ denote the set of all probability measures $\P$ under which the canonical process is a local martingale and for which $\P$-a.s.\ $\langle B\rangle_\cdot$ is absolutely continuous in $t$ and takes positive values. 
The set $\CP^{obs}$, however, is too large for the considerations made in \cite{STZ} for various reasons. 
In particular, when working under $\CP^{obs}$, it is impossible to  establish a one-to-one correspondence between volatility processes and probability measures. Since the uncertainty in this case stems from uncertainty about the right volatility process such an identification is needed.
This problem is overcome by considering  a nonempty set $\mathcal V$ of volatility processes $\sigma$ such that the stochastic differential equation under the Wiener measure $\P_0$
\begin{equation}\label{eq:SDE}dX_t=\sigma_t(X)dB_t\end{equation}
has \textit{weak uniqueness} in the sense of \cite[Definition 4.1]{STZ}. Given $\sigma$, this admits the selection of a unique $\P^\sigma$ such that $dB_t= \sigma_t(B)dW_t^{\sigma}$ $\P^\sigma$-a.s., where $W^{\sigma}$ is a $\P^\sigma$-standard Brownian motion. 
The set of probability measures $\CP\subset\CP^{obs}$ we obtain from such a set $\mathcal V$ is usually nondominated. For each such $\sigma\in\mathcal V$, the event 
\begin{equation}\label{eq:S(Psigma)}S(\P^\sigma):=\{\om\in\Omega\mid \forall\,t\in\QW_+:~\langle B\rangle_t(\om)=\int_0^t\sigma_s^2(\om)ds\}\end{equation}
satisfies $\P^\sigma(S(\P^\sigma))=1$. 
However, note that these events generally fail condition (b) in Definition~\ref{def:support}(1) if $\mathcal V$ contains more complex  $\omega$-dependent volatility processes $\sigma$. The main difficulty is that control of their intersections is tedious. 
This is precisely the reason why in \cite{STZ} the authors further thin out the set of admissible volatility processes to sets $\mathcal V$ of \textit{separable diffusion coefficients}; see \cite{STZ} for the details. 
As already noticed in \cite{Cohen} the resulting set $\CP:=\{\PW^\sigma\mid \sigma\in\mathcal V\}$ satisfies the Hahn property discussed above. Hence, $\CP$ is of class (S). More precisely, the events $S(\P^\sigma)$ in \eqref{eq:S(Psigma)} can be shown to be order supports of the $\PW^\sigma$, $\sigma\in\mathcal V$, so that $\CP$ is in fact its own supported alternative. The main setting of interest in \cite{STZ} thus embeds in our framework of probabilities of class (S). 

\smallskip

\subsubsection{Innovation}  Our next case study deals with innovation economics and its relation to Knightian uncertainty. We consider a model suggested in \cite[p.\ 2247]{Contracts}, which posits that innovation adds newly explored states given by a measurable space $(S_n,\CF_n)$ to a set of known states given by the measurable space $(S_o,\CF_o)$, $S_o,S_n$ being two nonempty sets. The combined state space $\Omega$ has the shape
\[\Omega=S_o\times S_n\]
 and is endowed with a product-$\sigma$-algebra $\CF=\CF_o\otimes\CF_n$.
 While the agents in the model are subject to mere risk on the known states, the newly discovered states add uncertainty. 
In order to capture this phenomenon with a set $\CP$ of relevant probability measures, one uses the projection $X_i\colon \omega=(s_1,s_2)\mapsto s_i$, $i=1,2$, and a probability measure $\pi$ on $(S_o, \CF_o)$ that we interpret as the first marginal. One then sets 
 \[\CP:=\{\P\in\Delta(\CF)\mid \P\circ X_1^{-1}=\pi\}.\]
 Suppose first that $S_o$ is discrete, that $\pi$ has full support on $S_o$, and that $\CF_n$ contains all singletons.
 Consider
 \begin{equation}\label{eq:defQ}\CQ:=\{\pi\otimes\delta_{s_2}\mid s_2\in S_n\}\subset\CP.\end{equation}
 Note that, for fixed $s_*\in S_n$ and $\Q:=\pi\otimes \delta_{s_*}$, $\Q(N)=0$ holds if and only if there is no $s_1\in S_o$ such that $(s_1,s_*)\in A$. 
 Hence, $\sup_{\Q\in\CQ}\Q(N)=0$ holds for $N\in\CF$ if and only if $N=\emptyset$. 
 This implies $\CP\approx\CQ$. One verifies that $\pi\otimes \delta_{s_*}\in \CQ$ is supported by $S_o\times \{s_\ast\}$, and thus $\CP$ is of class (S) with supported alternative $\CQ$ in this case. 
 
Similarly, we can consider a case in which $S_o$ is not necessarily discrete and $\pi$ is arbitrary, but $\CF_n$ still contains all singletons. To a certain degree, we limit uncertainty by postulating independence of coordinate projections $X_1$ and $X_2$. 
This leads to a redefinition of the set of relevant probability measures as
\[\CP:=\{\P\in\Delta(\CF)\mid \mbox{$\P$ is a product measure and}\, \P\circ X_1^{-1}=\pi\}.\] 
$\CQ$ from \eqref{eq:defQ} again satisfies  
$\CP'\approx \CQ$. Indeed, $\sup_{\Q\in\CQ}\Q(N)=0$ if and only if all sections $N_{s_2}:=\{s_1\in S_o\mid (s_1,s_2)\in N )\}$ are $\pi$-nullsets. Hence, Fubini's Theorem implies that $\P(N)=0$ for any $\P\in \CP$. Thus also in this case $\CP$ is of class (S). 
 
 \smallskip
 
 \subsubsection{Typical paths} The third case study in which we can---rather trivially---verify the class (S) property follows a recent strand of literature \cite{Prediction,B+al16,HouObloj} on superhedging that is inspired by \cite{Mykland}. It hinges on the very definition of a superhedge. A notion which is independent of any concrete choice of probability model is usually referred as ``pointwise": the superhedging strategy has to dominate in every state, or under every realisation of a concrete path. 
 As discussed in \cite{Prediction,HouObloj}, such a price tends to be unreasonably high and ignores more precise information an investor might have.
  
 As an alternative, one considers an event $\Xi\in\CF$ within a state space $(\Omega,\CF)$ (usually a path space) of ``typical" or relevant states/paths. The superhedging is then only demanded in states $\om\in\Xi$ belonging to the prediction set. As elaborated in \cite{Prediction}, a canonical probabilistic description of this situation is given by 
 \[\CP:=\{\P\in\Delta(\CF)\mid \P(\Xi)=1\}.\]
Let $\CA$ be the collection of all atoms of $\CF$, i.e.\ all events $A\subset\Xi$ such that each measurable subset $B\subset A$ satisfies $B\in\{\emptyset,A\}$. Atoms are identical or disjoint. Let us also impose the following technical assumption: 
 \begin{assumption}
    For all $\emptyset\neq B\in\CF$ there is $\emptyset \neq A\in\CA$ such that $A\subset B$.
 \end{assumption}
The assumption is satisfied if, for instance, all singletons are measurable. A disjoint supported alternative to $\CP$ is now given by selecting $\om_A\in A$, $A\in\CA$, and setting $\CQ:=\{\delta_{\om_A}\mid A\in\CA,~A\subset\Xi\}$.

\smallskip

\subsubsection{Robust binomial model}\label{sec:BN}

One of the most prominent models for robust discrete-time financial markets is the structure proposed in \cite{BN} whose basics we briefly recall in the following. Consider a finite time horizon $T\in\N$, time points $t\in\mathbb T:=\{0,...,T\}$, and a space $\Omega=\widetilde\Omega^T$ of paths within a Polish space $\widetilde\Omega$. 
The uncertainty structure is tree-like; relevant probability measures are prescribed for each time point $t\in\mathbb T$ and each historical path up to time $t$. 
More precisely, set $\Omega_0$ to be a singleton and 
$\Omega_t:=\widetilde\Omega^t$, $0\neq t\in\mathbb T$. 
Let $\CF_t$ denote the universal completion of the Borel $\sigma$-algebra $\CB(\Omega_t)$.\footnote{~The universal completion of $\CB(\Omega_t)$ is the $\sigma$-algebra obtained by choosing $\CF=\CB(\Omega_t)$ and $\mf S=\Delta\left(\CB(\Omega_t)\right)$ in \eqref{eq:defenlarge}.} 
For $t=0,\dots,T-1$ and $\om\in\Omega_t$ fixed,  $\CP_t(\omega)\subset\Delta\big(\mathcal B(\widetilde \Omega)\big)$ is a prescribed convex set of probability measures on the node $(t,\omega)$. 
In this context the main difficulty is proving that \[\text{graph}(\CP_t)=\{(\omega,\P)\mid \omega\in \Omega_t,\; \P\in \CP_t(\omega)\}\]
is analytic in order to apply measurable selection techniques. To our knowledge, this requirement is verified only in few concrete examples in the literature. We concentrate our attention to one of them, namely the robust binomial model presented in \cite{Car}.\footnote{~We conjecture that it is not possible to show that the general structure in \cite{BN} is of class (S). In any case, this question is beyond the scope of the present paper.}

In this example, we set $\widetilde\Omega=(0,\infty)$, $\mathbb T= \{0,\ldots, T\}$, $\Omega=(0,\infty)^T$, $\Omega_0=\{1\}$, and  $\Omega_t=(0,\infty)^t$, $0\neq t\in\mathbb T$.
Any $\omega\in \Omega$ (resp.\ $\omega\in\Omega_t$) is represented by a tuple $\omega=(x_1,\ldots,x_T)$ (resp.\ $\omega=(x_1,\ldots,x_t)$). We introduce a price process $(S_t)_{t\in \mathbb T}$ defined by $S_0=1$ and $S_{t+1}=S_t\cdot Y_{t+1}$. Here $Y_{t+1}:(0,\infty)\to (0,\infty)$ is a bijective map for all $t\in \{0,\ldots,T-1\}$. $\Delta:=\Delta\left(\mathcal B((0,\infty))\right)$ abbreviates the set of Borel probability measures on $(0,\infty)$. For any $t\in \{0,\ldots,T-1\}$, let $u_t,U_t,d_t,D_t,\pi_t,\Pi_t:\Omega_t\to[0,\infty)$ be Borel measurable random variables such that for any $\omega\in \Omega_t$ the following inequalities hold: 
\begin{enumerate}[(i)]
\item $ 0<\pi_t(\omega)\leq \Pi_t(\omega)< 1$; 
\item $d_t(\omega)\leq D_t(\omega)$ and $u_t(\omega)\leq U_t(\omega)$; 
\item $0<d_t(\omega) < 1 < U_t(\omega)$.
\end{enumerate}
These requirements allow to define a random set $E_t(\cdot):=[u_t(\cdot),U_t(\cdot)]\times [d_t(\cdot),D_t(\cdot)]\times [\pi_t(\cdot),\Pi_t(\cdot)]$, $t\in\mathbb T$. In turn, the probability measures in question are constructed as follows. 
For any $\omega\in\Omega_t$, consider the set of binomial laws $\mathcal{L}_{t+1}(\omega)=\{\pi \delta_u+(1-\pi)\delta_d\mid (u,d,\pi)\in E_t(\omega)\}$ respecting the constraints given by $E_t(\omega)$.
Define 
\begin{center}$\CQ_{t+1}(\omega)=\{\QW\in \Delta\mid \QW\circ Y_{t+1}^{-1} \in \mathcal{L}_{t+1}(\omega))\}$,\end{center}
i.e.\ the probabilities under which $Y_{t+1}$ has a Bernoulli-like distribution from  $\mathcal{L}_{t+1}(\om)$. Next let $\CP_{t+1}(\omega) =\textnormal{conv}(\CQ_{t+1}(\omega))$. 
\cite[Lemma 4.3]{Car} shows that  both sets $\textnormal{graph}(\CQ_{t+1})$ and $\textnormal{graph}(\CP_{t+1})$ are analytic, $t=0,\dots,T-1$, matching the main requirement necessary to apply the results in \cite{BN}. In particular $\CQ_{t+1},\CP_{t+1}$ admit measurable selectors, i.e., there are universally measurable stochastic kernels $Q_{t+1}:\Omega_t\to \Delta$ and  $P_{t+1}:\Omega_t\to \Delta$ such that $Q_{t+1}(\omega)\in \CQ_{t+1}(\omega)$, $P_{t+1}(\omega)\in \CP_{t+1}(\omega)$ for any $\omega\in \Omega_{t}$. At last, the family of multiperiod probabilities $\CP$ or $\CQ$ on $\Omega:=\Omega_T$ are introduced as 
\begin{align*}
\CP&:=\{\PW=P_1\otimes P_2\otimes \ldots \otimes P_{T}\mid P_t(\cdot)\in \CP_{t}(\cdot),\, t\in\mathbb T\},\\
\CQ&:=\{\QW=Q_1\otimes Q_2\otimes \ldots \otimes Q_{T}\mid Q_t(\cdot)\in \CQ_{t}(\cdot),\, t\in\mathbb T\}, 
\end{align*}
where the measures $\PW=P_1\otimes P_2\otimes \ldots \otimes P_{T}$ are defined as
\[\PW(A)=\int_{0}^{\infty}\ldots \int_{0}^{\infty}\ind_{A}(x_1,\ldots,x_T)P_{T}(x_1,\ldots,x_{T-1}; dx_T)\dots\cdot P_{2}(x_1; dx_2)\cdot P_1(dx_1),\quad A\in \mathcal B(\Omega).\]
The following proposition illustrates how the robust binomial model falls in the setup of the present paper (see Appendix \ref{proof:bin} for a proof).
\begin{proposition}\label{rob:bin}
Let $\CF:=\CF_T$ be the universal completion of the Borel $\sigma$-algebra $\mathcal B(\Omega)$. The family $\CP$ on $(\Omega,\CF)$ is of class \tn{(S)} with $\CQ$ being its supported alternative.
\end{proposition}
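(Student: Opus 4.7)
The proposition asserts two things: (i) each $\QW\in\CQ$ is supported in the sense of Definition~\ref{def:support}, and (ii) $\CP\approx\CQ$. My plan addresses them in turn.

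For (i), I would first observe that each $\QW=Q_1\otimes\cdots\otimes Q_T\in\CQ$ is a \emph{finitely supported} probability measure on $(\Omega,\CF)$. Indeed, every kernel value $Q_{t+1}(\omega')$ is a Bernoulli measure with at most two atoms, so an induction on $t$ yields that the time-$t$ marginal of $\QW$ lives on at most $2^t$ points of $\Omega_t$. Let $S(\QW)\subset\Omega$ denote the (finite, hence Borel) set of $\QW$-atoms. Property (a) of Definition~\ref{def:support}(1) is immediate. For property (b), if $N\in\CF$ satisfies $\QW(N\cap S(\QW))=0$, then $N\cap S(\QW)$ contains no $\QW$-atom; being a subset of the atom set $S(\QW)$, it must be empty---hence trivially $\CP$-polar.

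For (ii), the direction $\CQ\ll\CP$ follows from $\CQ\subset\CP$, which is immediate since $\CQ_t(\omega)\subset\textnormal{conv}(\CQ_t(\omega))=\CP_t(\omega)$ turns every $\CQ$-product into a valid $\CP$-product. For the converse direction $\CP\ll\CQ$, I would fix $\PW=P_1\otimes\cdots\otimes P_T\in\CP$. Since each $P_{t+1}(\omega_t)$ is a finite convex combination of two-point Bernoulli measures in $\CQ_{t+1}(\omega_t)$, the same induction as above shows that $\PW$ is finitely supported on some Borel set $\Sigma\subset\Omega$. The claim now reduces to: \emph{for every $\omega^*=(x_1,\dots,x_T)\in\Sigma$ there exists $\QW\in\CQ$ with $\QW(\{\omega^*\})>0$}. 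Granted this, a $\CQ$-polar $N$ cannot meet $\Sigma$, so $\PW(N)=0$.

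The main obstacle is to construct such a witnessing $\QW$. Membership $\omega^*\in\Sigma$ forces $P_{t+1}(\omega_t^*;\{x_{t+1}\})>0$ and hence $Y_{t+1}(x_{t+1})\in[u_t(\omega_t^*),U_t(\omega_t^*)]\cup[d_t(\omega_t^*),D_t(\omega_t^*)]$, where $\omega_t^*:=(x_1,\dots,x_t)$. I would begin from the ``default'' universally measurable kernel
\[
Q_{t+1}^{\mathrm{def}}(\omega'):=\pi_t(\omega')\delta_{Y_{t+1}^{-1}(u_t(\omega'))}+(1-\pi_t(\omega'))\delta_{Y_{t+1}^{-1}(d_t(\omega'))},
\]
which lies in $\CQ_{t+1}(\omega')$ for every $\omega'\in\Omega_t$ and is Borel measurable by the Borel measurability of $u_t,d_t,\pi_t$ together with $Y_{t+1}^{-1}$ (the latter by Lusin--Souslin). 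Next I would modify $Q_{t+1}^{\mathrm{def}}$ at the \emph{single} point $\omega_t^*$ to force $x_{t+1}$ into the support: if $Y_{t+1}(x_{t+1})\in[u_t(\omega_t^*),U_t(\omega_t^*)]$, replace the $u$-atom by $x_{t+1}$, yielding $\pi_t(\omega_t^*)\delta_{x_{t+1}}+(1-\pi_t(\omega_t^*))\delta_{Y_{t+1}^{-1}(d_t(\omega_t^*))}\in\CQ_{t+1}(\omega_t^*)$; proceed symmetrically if $Y_{t+1}(x_{t+1})\in[d_t(\omega_t^*),D_t(\omega_t^*)]$. Since $\{\omega_t^*\}$ is Borel in the Polish space $\Omega_t$, this pointwise modification preserves universal measurability and $\CQ_{t+1}$-membership. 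The resulting $\QW=Q_1\otimes\cdots\otimes Q_T\in\CQ$ then satisfies $\QW(\{\omega^*\})\geq\prod_{t=0}^{T-1}\min\{\pi_t(\omega_t^*),\,1-\pi_t(\omega_t^*)\}>0$ by the strict bounds $0<\pi_t(\cdot)\le\Pi_t(\cdot)<1$ in assumption~(i), completing the reduction.
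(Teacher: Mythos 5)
Your proposal is correct, but it follows a genuinely different route from the paper's. For supportedness, the paper constructs $S(\QW)$ through the measurable-selection machinery of \cite[Lemma 4.3]{BN}: it extracts the universally measurable maps $m_t,M_t$ filtering the two-point topological support of $Q_{t+1}(\omega;\cdot)\circ Y_{t+1}^{-1}$, assembles $S(\QW)$ as the ``binomial tree'' event $\bigcap_t (W^t_d)^{-1}(\{0\})\cup(W^t_u)^{-1}(\{0\})$, and verifies $\QW(S(\QW))=1$ by an iterated-integral computation. You instead observe that each $\QW\in\CQ$ is purely atomic with at most $2^T$ atoms, so the finite atom set is automatically Borel and condition (b) of Definition~\ref{def:support}(1) holds vacuously (a $\QW$-null subset of the atom set is empty). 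Since every point of the paper's $S(\QW)$ carries positive mass, the two support sets coincide up to the obvious identification; your version simply bypasses the selection argument, which is a real simplification. Conversely, where the paper dispatches $\CP\approx\CQ$ with ``by construction'', you supply the substantive content: $\CQ\subset\CP$ gives one direction, and for $\CP\ll\CQ$ you correctly reduce to finding, for each atom $\omega^*$ of a given finitely supported $\PW\in\CP$, a witnessing $\QW\in\CQ$ with $\QW(\{\omega^*\})>0$, built by a one-point modification of a default kernel. The only implicit ingredient is that the bijections $Y_{t+1}$ are Borel (needed for Lusin--Souslin and the Borel measurability of your default kernel), but this is already tacit in the paper's definition of the adapted price process, so it is not a gap. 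Both the elementary atomic argument and the explicit equivalence proof would be worthwhile additions to the paper's terser exposition.
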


\subsection{$\bf \sca\neq\ca$ is often the case}\label{sec:implications:axioms} 

In this section we demonstrate that $\sca\neq\ca$ is the case in a broad class of examples over Polish spaces. We 
also discuss the relation of this observation to the procedure of aggregating Bayesian experts in \cite{Amarante}. 
The assumption that the underlying $\Omega$ is Polish is common in financial applications, a fact also illustrated by Section~\ref{sec:BN}. 
Recall that if $\Omega$ is Polish and $\CF$ denotes its Borel-$\sigma$-algebra, then also  $\Delta(\CF)$ is Polish (\cite[Theorem 15.15]{Ali}).
As a preparation for Proposition~\ref{prop:Polish} below, recall that a subset $P$ of a Polish space is \textit{perfect} if it is closed and if, for every $\sigma\in P$, the closure of $P\setminus\{\sigma\}$ is $P$.

\begin{proposition}\label{prop:Polish}
Let $\Omega$ be Polish, $\CF$ be the Borel-$\sigma$-algebra on $\Omega$, and $\CP\subset\Delta(\CF)$ be nonempty. Suppose there is a set $\emptyset\neq\CR\subset\Delta(\CF)\cap\sca$ which is  perfect in $\Delta(\CF)$, and which satisfies
\[\forall\,\QW,\QW'\in\CR:~\QW\neq\QW'~\Longrightarrow~\ind_{S(\QW)}\wedge\ind_{S(\QW')}=0.\]
Then there is a probability measure $\mu\in{ca_{\mbf c}}\backslash \sca$.
The assertion also holds if perfectness of $\CR$ is replaced by the assumption that $\CR$ is an uncountable Borel or analytic set, respectively.
\end{proposition}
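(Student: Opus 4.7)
The plan is to construct the desired $\mu\in{ca_{\mbf c}}\setminus\sca$ as a direct-integral mixture of the measures in a perfect subset of $\CR$ against an atomless Borel probability measure. First I would reduce the uncountable Borel and analytic cases to the perfect case by invoking the classical perfect set theorem: any uncountable analytic subset of a Polish space contains a perfect subset, and the disjoint-support hypothesis on $\CR$ is manifestly hereditary. Hence we may without loss of generality assume that $\CR$ itself is perfect in $\Delta(\CF)$. Being a perfect subset of a Polish space, $\CR$ is an uncountable Polish subspace in its own right and contains a homeomorphic copy of the Cantor set; transporting the standard coin-flip measure through such an embedding supplies an atomless Borel probability measure $\nu$ on $\CR$.

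Next I would define $\mu\colon\CF\to[0,1]$ by the direct integral
\[\mu(A):=\int_{\CR}\QW(A)\,d\nu(\QW),\quad A\in\CF.\]
The evaluation map $\QW\mapsto\QW(A)$ is Borel measurable on $\Delta(\CF)$ (this being the defining property of its Borel structure, cf.\ \cite[Ch.\ 15]{Ali}), so the integrand is well defined and bounded. Monotone convergence then yields countable additivity, and $\mu(\Omega)=1$ makes $\mu$ a probability measure. Since each $\QW\in\CR\subset\sca\subset\ca$ satisfies $\QW\ll\CP$, any $\CP$-polar event $N\in\CF$ obeys $\mu(N)=\int_{\CR}\QW(N)\,d\nu(\QW)=0$, so indeed $\mu\in\ca$.

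The heart of the argument is showing that $\mu$ is not supported. I would first verify that $\mu(S(\QW_0))=0$ for every $\QW_0\in\CR$: for $\QW\in\CR\setminus\{\QW_0\}$ the hypothesis $\ind_{S(\QW_0)}\wedge\ind_{S(\QW)}=0$ forces $S(\QW_0)\cap S(\QW)$ to be $\CP$-polar, and since $\QW\ll\CP$ is concentrated on $S(\QW)$ one obtains $\QW(S(\QW_0))=0$; for $\QW=\QW_0$ the integrand equals $1$, but $\nu(\{\QW_0\})=0$ by atomlessness, so the integral still vanishes. Assume now, toward a contradiction, that $\mu$ is supported by some $S_\mu\in\CF$. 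For each $\QW_0\in\CR$, the inequality $\mu(S(\QW_0)\cap S_\mu)\le\mu(S(\QW_0))=0$ combined with Definition~\ref{def:support}(1)(b) forces $S(\QW_0)\cap S_\mu$ to be $\CP$-polar, whence
\[\QW_0(S_\mu)=\QW_0(S_\mu\cap S(\QW_0))+\QW_0(S_\mu\cap S(\QW_0)^c)=0+0=0,\]
using that $\QW_0$ does not charge $\CP$-polar events and is concentrated on $S(\QW_0)$. Integrating against $\nu$ yields $\mu(S_\mu)=0$, which contradicts $\mu(S_\mu^c)=0$ and $\mu(\Omega)=1$.

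The main potential obstacle is technical rather than conceptual: carefully justifying the joint measurability needed for $\mu$ to be a bona fide countably additive probability in ${ca_{\mbf c}}$, and invoking the existence of an atomless Borel probability on a perfect Polish space. Both are well-documented in the descriptive set theory and measure theory literature, and once they are in hand the contradiction driving non-supportedness is a transparent consequence of the pairwise disjointness of supports and the atomlessness of $\nu$.
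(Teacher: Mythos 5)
Your proposal is correct and follows essentially the same route as the paper: embed a Cantor set into the perfect set $\CR$, push an atomless measure onto it, mix the measures of $\CR$ against it, and derive a contradiction with supportedness from the pairwise $\CP$-polarity of the support intersections and the atomlessness of the mixing measure. The only cosmetic difference is that you place the atomless measure directly on $\CR$ and first establish $\mu(S(\QW_0))=0$ before intersecting with a putative support $S_\mu$, whereas the paper works with a parametrisation by $\{0,1\}^\N$ and computes $\mu(S(\mu)\cap S(\QW_\sigma))$ directly.
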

\begin{proof}
Let $\CR$ be the mentioned subset of $\Delta(\CF)\cap \sca$. Then there is a continuous injective map $\QW_\bullet:\{0,1\}^\N\to\Delta(\CF)$ with $\{\QW_\sigma\mid\sigma\in\{0,1\}^\N\}\subset\CR$; cf.\ \cite[Theorem 6.2]{Kechris}.
The Cantor space $\{0,1\}^\N$ is tacitly assumed to be endowed with the discrete product topology which is Polish.
For any $E\in\CF$, the function $\{0,1\}^\N\ni\sigma\mapsto\QW_\sigma(E)$, is Borel measurable as a composition of the continuous map $\QW_\bullet$ and the Borel measurable function $\Delta(\CF)\ni \mu\mapsto \mu(E)$ (\cite[Lemma 15.16]{Ali}). Let $\pi$ be any non-atomic Borel probability measure on $\{0,1\}^\N$, whose existence is guaranteed by \cite[Theorem 12.22]{Ali}. Consider 
\begin{equation}\label{eq:measure}\mu\colon\CF\to[0,1],\quad E\mapsto\int_{\{0,1\}^\N}\QW_\sigma(E)\pi(d\sigma).\end{equation}
$\mu$ is a probability measure dominated by $\CP$. Assume for contradiction that $\mu$ is supported. Let $\sigma\in\{0,1\}^\N$ be arbitrary. For all $\{0,1\}^\N\ni\sigma'\neq \sigma$, we have $\ind_{S(\QW_\sigma)}\wedge\ind_{S(\QW_{\sigma'})}=0$ in $\Linfty$, whence $\QW_{\sigma'}(S(\QW_\sigma))=0$ follows. By \eqref{eq:measure},
\[\mu(S(\mu)\cap S(\QW_{\sigma}))=\QW_{\sigma}\left(S(\mu)\cap S(\QW_\sigma)\right)\pi(\{\sigma\})=0.\]
The definition of a support shows that $\ind_{S(\mu)\cap S(\QW_{\sigma})}=0$ in $\Linfty$. As $\sigma$ was chosen arbitrarily, $\QW_{\sigma}(S(\mu))=\QW_{\sigma}(S(\mu)\cap S(\QW_\sigma))=0$ for all $\sigma\in\{0,1\}^\N$, contradicting $\mu(S(\mu))=1$.
At last, any uncountable Borel or analytic subset of $\Delta(\CF)$ contains a nonempty perfect set; see \cite[Theorems 13.6 \& 29.1]{Kechris}. 
\end{proof}

We now illustrate Proposition~\ref{prop:Polish} in the context of volatility uncertainty as in Section~\ref{sec:voluncertainty}.

\begin{example}\label{ex:big}
Assume that the set $\mathcal V$ of volatility processes in Section~\ref{sec:voluncertainty} contains  
the constant volatility processes $\kappa:\R_+\ni t\mapsto \kappa$, $0<\kappa_1\le \kappa\le \kappa_2$. 
Set $\CP:=\{\P^\sigma\mid \sigma\in\mathcal V\}$. 
We recall that each $\P^\sigma$ is supported with order support
\begin{center}$S(\P^\sigma):=\{\om\in \Omega\mid\forall\, t\in\QW_+:~\langle B\rangle_t(\om)=\int\limits_0^t\sigma_s(\om)^2ds\}\in\CF$,~$\sigma\in\mathcal V$,\end{center} so that $S(\P^\kappa):=\{\om\in \Omega\mid\forall\, t\in\QW_+:~ \langle B\rangle_t(\om)=\kappa^2t \}$, $\kappa\in [\kappa_1,\kappa_2]$.
One easily verifies that the set $\CR:=\{\P^\kappa\mid\kappa\in [\kappa_1,\kappa_2]\}\subset\Delta(\CF)\cap\sca$
is uncountable and closed in the topology of weak convergence on $\Delta(\CF)$. 
Moreover, for $\kappa_1\leq\kappa<\kappa'\leq\kappa_2$, $\ind_{S(\P^\kappa)}\wedge\ind_{S(\P^{\kappa'})}=0$ $\CP$-q.s.\ We thus are precisely in the situation of Proposition~\ref{prop:Polish}. Let $\pi$ be any non-atomic probability measure on $[\kappa_1,\kappa_2]$.
Arguing as in the proof that the measure in \eqref{eq:measure} is not supported and using Proposition~\ref{prop:(A1)-(A3)}, one shows \textit{inter alia} that for any choice of a strictly increasing {\em utility function} $u\colon\R\to\R$ the functional 
\begin{equation}\label{eq:amarante}\phi\colon\begin{array}{l}\Linfty\to\R,\\
X\mapsto\int_{\kappa_1}^{\kappa_2}\E_{\P^\kappa}[u(X)]\pi(d\kappa),\end{array}\end{equation}
is not order (semi)continuous.
\end{example}

Functionals of shape~\eqref{eq:amarante} have a natural interpretation in the framework of \cite{Amarante}, where a set of probability measures $\CP$ models the opinions of {\em Bayesian experts} concerning the probabilities of relevant events. These experts are consulted by a decision maker (DM) with utility function $u\colon \R\to\R$
about a given {\em alternative} $X\in\Linfty$. In combination, each expert opinion leads to an expected utility evaluation, the set of all such evaluations being $\{\E_\P[u(X)]\mid\P\in\CP\}$.
Faced with the problem of how to process this set, the DM aggregates it with a weighted average. Special cases of such weighted averages are integrating $u(X)$ with respect to a measure as in \eqref{eq:measure}, or the expression $\int_{\kappa_1}^{\kappa_2}\E_{\P^\kappa}[u(X)]\pi(d\kappa)$ in \eqref{eq:amarante} with weighting probability $\pi$.
Such aggregation procedures can, e.g., underpin risk measurement procedures; cf.\ \cite[Section 5]{Amarante}.

Our discussion in the present case study raises an important issue. {\em A priori}, one cannot distinguish whether expected utility with respect to a probability measure as in \eqref{eq:measure} is an {\em aggregation} of the opinions of other experts, or the evaluation of the opinion of a single expert.
The two perspectives can be distinguished though if the demand that expert opinions be supported is imposed. 
It is not too far-fetched that relevant expertise leads to a set of relevant scenarios as collected by an order support. 
While the evaluation $X\mapsto\E_\P[u(X)]$ for a single expert is then order continuous and therefore regular, the aggregation functional may unsurprisingly lose this property.

\medskip

\section{Brannath-Schachermayer Bipolar Theorem under uncertainty}\label{sec:BS}

Bipolar theorems play an important role in utility optimisation, see the seminal paper by \cite{Kramkov} under dominated uncertainty and the recent approach in \cite{BartlKupper2021} for non-dominated uncertainty. In this section we classify robust variants of the {\em Brannath-Schachermayer Bipolar Theorem} \cite[Theorem 1.3]{Brannath}. Its application to utility maximization in nondominated models is part of ongoing research. 

Consider the space ${L^0_{\mbf c}}$ as well as a convex and {\em solid} set $\emptyset\neq\CC\subset {L^0_{\mbf c}}_+$. $\CC$ being solid means that $X\in\CC$ and $0\peq Y\peq X$ together imply $Y\in\CC$. In a dual approach to utility maximization $\CC$ would equal the set nonnegative investment opportunities which may be superhedged at the cost of one unit of currency. 
The one-sided polar of $\CC$ is
\[\CC^\diamond:=\big\{\mu\in\ca_+\big|\,\forall\,X\in\CC:~\int X\,d\mu\le 1\big\}.\]
Classically, $\CC^\diamond$ corresponds to the set of arbitrage-free pricing rules in the context of utility maximisation.
Now consider the following properties of $\CC$:
\begin{itemize}
\item[\namedlabel{BS1}{\tn{\textbf{(BS1)}}}] $\CC$ is order closed; cf.\ Appendix~\ref{sec:prelim}. 
\item[\namedlabel{BS2}{\tn{\textbf{(BS2)}}}]
$\CC=\{X\in {L^0_{\mbf c}}_+\mid \forall\,\mu\in\CC^\diamond:~\int X\,d\mu\leq 1\}$.
\item[\namedlabel{BS3}{\tn{\textbf{(BS3)}}}]$\CC=\{X\in {L^0_{\mbf c}}_+\mid \forall\,\mu\in\CC^\diamond\cap\sca:~\int X\,d\mu\leq 1\}$.
\end{itemize}
If $\CP\approx\P^*$ for a probability measure $\P^*\in\Delta(\CF)$, the Brannath-Schachermayer Bipolar Theorem \cite[Theorem 1.3]{Brannath} states that a convex and solid set $\CC\subset\Lzeroplus$ satisfies \ref{BS2} if and only if it is closed with respect to convergence in probability under $\P^*$. The following lemma is straightforward to prove though.
\begin{lemma}
For $\P^*\in\Delta(\CF)$ and a solid subset $\CC\subset L^0_{\P^*+}$, the following are equivalent:
\begin{itemize}
    \item[(1)]$\CC$ is closed with respect to convergence in probability under $\P^*$.
    \item[(2)]$\CC$ is order closed, i.e., satisfies \ref{BS1}.
\end{itemize}
\end{lemma}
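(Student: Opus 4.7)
The plan is to prove both implications by exploiting the standard interplay between order convergence in $L^0_{\P^*}$ and convergence in $\P^*$-probability, together with the solidness of $\CC$.

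For (1)$\Rightarrow$(2), I would suppose $\CC$ is closed under convergence in probability and take a sequence $(X_n)\subset\CC$ that order converges to some $X \in L^0_{\P^*+}$. Sequential order convergence in $L^0_{\P^*}$ admits the standard characterisation that there exists a sequence $Y_n \downarrow 0$ $\P^*$-a.s.\ with $|X_n - X| \leq Y_n$ $\P^*$-a.s. In particular $X_n \to X$ $\P^*$-a.s., hence in $\P^*$-probability, so $X \in \CC$ by the assumed closedness.

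For (2)$\Rightarrow$(1), I would suppose $\CC$ is order closed and take $(X_n) \subset \CC$ with $X_n \to X$ in $\P^*$-probability. A standard subsequence extraction yields $(X_{n_k})$ converging $\P^*$-a.s.\ to $X$. The key construction is
\[ Y_k := \inf_{j\geq k} X_{n_j}, \]
which is well defined in $L^0_{\P^*+}$ by the nonnegativity of the $X_{n_j}$. Three observations then conclude the argument: first, $0 \leq Y_k \leq X_{n_k}$ $\P^*$-a.s., so solidness of $\CC$ gives $Y_k \in \CC$; second, $(Y_k)$ is monotone increasing; third, $Y_k \uparrow \liminf_k X_{n_k} = X$ $\P^*$-a.s. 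Hence $Y_k \uparrow X$ is an order-convergent sequence in $\CC$, and order closedness forces $X \in \CC$.

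The main---admittedly minor---obstacle is reconciling the paper's notion of order closedness, which in the preliminaries is phrased in terms of nets, with the sequential arguments above. This is handled by noting that for solid subsets of $L^0_{\P^*}$, order closedness under nets is equivalent to sequential order closedness, so verifying the sequential version is enough.
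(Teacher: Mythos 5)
Your argument is correct, and it is precisely the ``straightforward'' proof the paper omits: (1)$\Rightarrow$(2) because an order-convergent sequence in $L^0_{\P^*}$ converges $\P^*$-a.s.\ and hence in probability, and (2)$\Rightarrow$(1) via an a.s.-convergent subsequence and the solid minorants $Y_k=\inf_{j\ge k}X_{n_j}\uparrow X$. The one assertion you leave unjustified---that for solid subsets of $L^0_{\P^*}$ net order closedness reduces to sequential order closedness---is indeed valid, and since the paper defines order closedness via nets it deserves a line: if $(X_\alpha)\subset\CC$ order converges to $X$ with dominating net $Y_\alpha\downarrow 0$, then $0\le (X-Y_\alpha)^+\le X_\alpha$, so $(X-Y_\alpha)^+\in\CC$ by solidity and this net increases with supremum $X$; super Dedekind completeness of $L^0_{\P^*}$ (the countable sup property) extracts an increasing sequence in $\CC$ with supremum $X$, which is exactly the configuration your construction in (2)$\Rightarrow$(1) already treats. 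With that sentence added, the proof is complete.
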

Following the formulation of \cite[Theorem 14]{surplus}, equivalences between \ref{BS1} and \ref{BS2} or \ref{BS3} can therefore be rightly called robust variants of the Brannath-Schachermayer Bipolar Theorem, the focus of the present section. \cite[Theorem 14]{surplus} to our knowledge provides the first such robust extension of the Brannath-Schachermayer Bipolar Theorem under the assumption that $\ca^*=\Linfty$; cf.\ \cite[p.\ 1361]{surplus}.
 Also, \ref{BS3} \textit{a priori} implies \ref{BS2} because of the inclusion $\sca\subset\ca$.

\begin{theorem}\label{thm:BS}
Let $\CP\subset\Delta(\CF)$ be nonempty. 
\begin{itemize}
\item[\tn{(1)}]The following are equivalent:
\begin{itemize}
\item[\tn{(i)}]$\CP$ is of class \tn{(S)}. 
\item[\tn{(ii)}]For all convex and solid sets $\emptyset\neq\CC\subset {L^0_{\mbf c}}_+$, \ref{BS1} and \ref{BS3} are equivalent.
\end{itemize}
\item[\tn{(2)}]The following are equivalent:
\begin{itemize}
\item[\tn{(i)}]$\sca=\ca$.
\item[\tn{(ii)}]For all convex and solid sets $\emptyset\neq\CC\subset {L^0_{\mbf c}}_+$, \ref{BS1} and \ref{BS2} are equivalent.
\end{itemize}
\end{itemize}
\end{theorem}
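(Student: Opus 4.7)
The trivial implications \ref{BS3}$\Rightarrow$\ref{BS1} and \ref{BS2}$\Rightarrow$\ref{BS1} both follow from order-lower-semicontinuity of $X\mapsto\int X\,d\mu$ on $L^0_{\mathbf c+}$: on $\Linfty$ this is Proposition~\ref{prop:(A1)-(A3)}(3), and monotone convergence extends it to $L^0_{\mathbf c+}$, so \ref{BS2} and \ref{BS3} each present $\CC$ as an intersection of order-closed half-spaces. Moreover $\CC^\diamond\cap\sca\subseteq\CC^\diamond$ gives \ref{BS3}$\Rightarrow$\ref{BS2} via bipolar containment. For part~(2)(i)$\Rightarrow$(ii) this already suffices: $\sca=\ca$ makes \ref{BS2} identical to \ref{BS3}, and $\CP\subseteq\ca=\sca$ makes $\CP$ its own supported alternative, so part~(1) supplies the equivalence with \ref{BS1}. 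For part~(1)(ii)$\Rightarrow$(i) contrapositively, assume $\CP$ fails class~(S); a Zorn-based maximal-disjoint-system argument in $\sca_+$ (together with normalisation) shows that there must exist a non-polar $N\in\CF$ with $\mu(N)=0$ for every $\mu\in\sca_+$, for otherwise $\sca$ would furnish a supported alternative. Then $\CC:=\{0\}$ is convex, solid, and order closed, satisfying \ref{BS1}, while $\mathbf 1_N\in\CC^{\diamond\diamond}_{\sca}\setminus\CC$ violates \ref{BS3}.

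The main content is part~(1)(i)$\Rightarrow$(ii). Fix a disjoint supported alternative $\CQ$ (Lemma~\ref{lem:disjoint:sup:alt}) with pairwise disjoint supports $\{S(\Q)\}$, a convex, solid, order-closed $\CC$, and $X_0\in\CC^{\diamond\diamond}_{\sca}$; the goal is $X_0\in\CC$. For each $\Q\in\CQ$ form $\CC_\Q:=j_\Q(\CC)\subseteq L^0_{\Q+}$ via the natural projection $j_\Q\colon L^0_{\mathbf c+}\to L^0_{\Q+}$; it is convex, solid, and, by combining solidity of $\CC$ (giving $Y\cdot\mathbf 1_{S(\Q)}\in\CC$ for $Y\in\CC$), disjointness of the $\{S(\Q)\}$, and the identification of $\Q$-a.s.\ with $\CP$-q.s.\ agreement on $S(\Q)$ from Definition~\ref{def:support}(1)(b), closed under convergence in $\Q$-probability. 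Any $\nu\in(\CC_\Q)^{\diamond_\Q}$ with density $g:=d\nu/d\Q$ lifts via $\hat\nu(A):=\int_A g\,d\Q$ to a measure in $\CC^\diamond\cap\sca$: $\hat\nu$ is supported by $\{g>0\}\cap S(\Q)$ because $\Q$ is, and $\int Y\,d\hat\nu=\int j_\Q(Y)\,d\nu\le 1$ for $Y\in\CC$. The hypothesis $X_0\in\CC^{\diamond\diamond}_{\sca}$ therefore yields $\int j_\Q(X_0)\,d\nu\le 1$ for every such $\nu$, so the classical Brannath--Schachermayer Theorem \cite[Theorem~1.3]{Brannath} in $L^0_\Q$ delivers $j_\Q(X_0)\in\CC_\Q$. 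Solidity combined with the support identification then upgrades this to $X_0\cdot\mathbf 1_{S(\Q)}\in\CC$ for every $\Q\in\CQ$.

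The concluding aggregation forms the net $Y_F:=X_0\cdot\mathbf 1_{\bigcup_{\Q\in F}S(\Q)}$ indexed by finite $F\subseteq\CQ$. Because $\CQ\approx\CP$ one has $\sup_F\mathbf 1_{\bigcup_{\Q\in F}S(\Q)}=\mathbf 1_\Omega$ in $L^\infty_{\mathbf c}$, whence $Y_F\uparrow X_0$ in order; provided each $Y_F\in\CC$, order-closedness of $\CC$ yields $X_0\in\CC$. The inductive verification of $Y_F\in\CC$ on $|F|$ is the \emph{main obstacle}: disjoint sums of members of a convex solid set need not stay inside. The plan is to recurse by applying the previous substep with $\CQ$ replaced by the finite subfamily $F$ (on whose union of supports $\Linfty$-classes identify with the finite product $\prod_{\Q\in F}L^0_\Q$), where the local bipolar argument on this product directly pastes the disjoint pieces. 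For part~(2)(ii)$\Rightarrow$(i) contrapositively, given $\mu_0\in\ca_+\setminus\sca$ one exploits the support-failure dichotomy of Lemma~\ref{lem:simple} to build a convex, solid, order-closed $\CC$ with $\CC^{\diamond\diamond}\supsetneq\CC$: the defect of $\mu_0$ contributes an element of the bipolar that the supported side of $\CC^\diamond$ cannot detect, witnessing the failure of \ref{BS2}.
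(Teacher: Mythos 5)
Your argument for part (2)(ii)$\Rightarrow$(i) rests on a false premise and cannot be repaired along the lines you sketch. You open by asserting that \ref{BS2}$\Rightarrow$\ref{BS1} is ``trivial'' because $X\mapsto\int X\,d\mu$ is order lower semicontinuous for every $\mu\in\ca_+$; but by Proposition~\ref{prop:(A1)-(A3)}(3) this functional is order continuous on $\Linfty$ exactly when $\mu\in\sca$, and for $\mu\in\ca_+\setminus\sca$ it is not even order-lsc (in Example~\ref{ex:Lebesgue}, $\ind_F\uparrow\ind_{[0,1]}$ in order over finite $F$ while $\int\ind_F\,d\lambda=0$ and $\int\ind_{[0,1]}\,d\lambda=1$). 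So the half-spaces appearing in \ref{BS2} need not be order closed, and indeed the whole point of part (2) is that \ref{BS2}$\Rightarrow$\ref{BS1} characterises $\sca=\ca$. Consequently, your contrapositive plan for (2)(ii)$\Rightarrow$(i) --- produce an order-closed convex solid $\CC$ whose \ref{BS2}-bipolar is strictly larger --- is impossible in the cases that matter: $\sca\neq\ca$ is compatible with $\CP$ being of class (S) (again Example~\ref{ex:Lebesgue}), and in that situation part (1) together with the inclusion $\sca\subset\ca$ forces \emph{every} order-closed convex solid $\CC$ to satisfy \ref{BS3} and hence \ref{BS2}. The failure of (2)(ii) must instead be witnessed in the opposite direction, by a set satisfying \ref{BS2} but not \ref{BS1}; the paper does this by taking a net $0\peq X_\alpha\uparrow X$ in $\Linfty$ along which a non-supported $\mu$ fails order continuity and setting $\CC:=\{Y\in\Lzeroplus\mid\int Y\,d\mu\le s\}$ with $s=\sup_\alpha\int X_\alpha\,d\mu$, so that hypothesis (ii) forces $\int X\,d\mu=s$ and thus order continuity, i.e.\ supportedness, of $\mu$.

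The remainder is essentially sound but partly programmatic. Your (1)(ii)$\Rightarrow$(i) with $\CC=\{0\}$ is a legitimate variant of the paper's argument (which uses $\CC=\{X\in\Lzeroplus\mid X\ind_A=0\}$ for a fixed non-polar $A$), both reducing to Proposition~\ref{prop}(1). For the core implication \ref{BS1}$\Rightarrow$\ref{BS3} under class (S) you take a genuinely different route from the paper: you fibre over a disjoint supported alternative and invoke the classical Brannath--Schachermayer theorem in each $L^0_\Q$, whereas the paper truncates to $\mathcal D=\CC\cap\Linfty$, applies \cite[Lemma~2]{surplus} to pass from order closedness to $\sigma(\Linfty,\sca)$-closedness, and then uses the abstract bipolar theorem. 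Your self-identified obstacle (pasting $X_0\ind_{S(\Q)}\in\CC$ over finitely many $\Q$) is real --- convexity alone only yields the average, not the disjoint sum --- but it can be closed as you suggest: for finite $F\subset\CQ$ the measure $|F|^{-1}\sum_{\Q\in F}\Q$ is supported with support $U_F=\bigcup_{\Q\in F}S(\Q)$, the solid set $\ind_{U_F}\CC\subset\CC$ is closed in probability under it, and every polar element there lifts into $\CC^\diamond\cap\sca$, so the classical bipolar theorem applied on $U_F$ yields $X_0\ind_{U_F}\in\CC$ directly; order closedness then finishes the aggregation. As written, however, this step remains a plan rather than a proof, and you should also record explicitly that the polars in \cite[Theorem~1.3]{Brannath} are taken over all of $L^0_+$ while yours range over finite measures, with the gap bridged by truncating densities and monotone convergence.
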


For the sake of transparency, we single out one implication in the following lemma.

\begin{lemma}\label{lem:BS}\
Suppose $\CP$ is of class \tn{(S)}. Then \ref{BS1} implies \ref{BS3}.
\end{lemma}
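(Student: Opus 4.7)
The inclusion of $\CC$ in the set on the right of \ref{BS3} is automatic from the definition of the polar; the content is the reverse inclusion. My plan is to reduce to the classical Brannath--Schachermayer Bipolar Theorem, applied separately on each component of a sufficiently rich family of supported probability measures. By Lemma~\ref{lem:disjoint:sup:alt}, let $\CQ$ be a disjoint supported alternative, so that $\ind_{S(\QW)}\wedge\ind_{S(\QW')}=0$ in $\Linfty$ whenever $\QW\neq \QW'$. For each finite $F\subset\CQ$, set $S_F:=\bigcup_{\QW\in F}S(\QW)$ and $\mu_F:=|F|^{-1}\sum_{\QW\in F}\QW$. A direct verification using the support property of each $\QW\in F$ shows that $\mu_F\in \sca$ is a probability measure with order support $S_F$. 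Moreover (repeating the argument sketched just before Lemma~\ref{lem:disjoint:sup:alt} in the discussion of the isomorphism $X\mapsto X|_{\QW}$), the restriction map $\pi_F\colon \ind_{S_F}{L^0_{\mbf c}}\to L^0_{\mu_F}$ is an order isomorphism, since $\mu_F$-null subsets of $S_F$ are $\CP$-polar.

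Fix $X_0$ in the right-hand side of \ref{BS3}; the goal is $X_0\in\CC$. The image $\CC_F:=\pi_F(\CC)\subset L^0_{\mu_F+}$ is convex and solid (solidity follows from the solidity of $\CC$ by lifting and truncating representatives pointwise). Every $\nu\in L^1_{\mu_F+}$ corresponds to a measure $\nu\cdot\mu_F$, which is $\ll\mu_F\ll\CP$ and supported (on $\{\nu>0\}\cap S_F$), hence $\nu\cdot\mu_F\in\sca_+$. Under this correspondence, $\nu\cdot\mu_F\in\CC^\diamond$ iff $\nu\in\CC_F^\diamond$, since $\int X\,d(\nu\cdot\mu_F)=\int \pi_F(X)\nu\,d\mu_F$ for every $X\in\CC$ (using $\mu_F(S_F^c)=0$). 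Since $X_0$ lies in the right-hand side of \ref{BS3}, in particular $\int \pi_F(X_0)\,\nu\,d\mu_F=\int X_0\,d(\nu\cdot\mu_F)\leq 1$ for every $\nu\in\CC_F^\diamond$. The classical Brannath--Schachermayer Bipolar Theorem \cite[Theorem~1.3]{Brannath} applied in $L^0_{\mu_F}$ therefore places $\pi_F(X_0)$ in the closure of $\CC_F$ in $\mu_F$-probability.

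Pick $X_n\in\CC$ with $\pi_F(X_n)\to\pi_F(X_0)$ in $\mu_F$-probability. Replacing $X_n$ by $(X_n\wedge X_0)\ind_{S_F}$ (which remains in $\CC$ by solidity, is concentrated on $S_F$, and is bounded above by $X_0\ind_{S_F}$), then passing to a subsequence, we may assume $\pi_F(X_n)\to\pi_F(X_0\ind_{S_F})=\pi_F(X_0)$ $\mu_F$-almost surely, dominated by $\pi_F(X_0)$. Via the order isomorphism $\pi_F^{-1}$, dominated a.s.\ convergence in $L^0_{\mu_F}$ transfers to order convergence $X_n\to X_0\ind_{S_F}$ in $\ind_{S_F}{L^0_{\mbf c}}\subset {L^0_{\mbf c}}$. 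Order closedness \ref{BS1} of $\CC$ therefore yields $X_0\ind_{S_F}\in\CC$ for every finite $F\subset\CQ$.

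Finally we patch. The net $(X_0\ind_{S_F})_{F\subset\CQ\text{ finite}}$, directed by inclusion, is increasing and dominated by $X_0$. To see its order supremum equals $X_0$, let $Y\in{L^0_{\mbf c}}$ satisfy $Y\geq X_0\ind_{S_F}$ for every finite $F$; specialising to $F=\{\QW\}$ gives $Y\geq X_0\ind_{S(\QW)}$, whence $Y|_{\QW}\geq X_0|_{\QW}$ $\QW$-a.s.\ for every $\QW\in\CQ$. Since $\CQ\approx\CP$, the event $\{Y<X_0\}$ is $\CP$-polar, so $Y\geq X_0$. Thus $X_0\ind_{S_F}\uparrow X_0$ is order convergence in ${L^0_{\mbf c}}$, and \ref{BS1} once more gives $X_0\in\CC$. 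The main technical hurdle throughout is making precise and exploiting the order isomorphism $\pi_F$: it is exactly the support property of $\mu_F$ that turns the local $\mu_F$-a.s.\ approximation into genuine order convergence in the ambient space ${L^0_{\mbf c}}$, which is the only input feeding into the order closedness of $\CC$.
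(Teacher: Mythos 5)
Your proof is correct, and it takes a genuinely different route from the paper's. The paper argues topologically: it passes to $\mathcal D=\CC\cap\Linfty$, invokes \cite[Lemma 2]{surplus} to show that an order closed convex solid subset of $\Linfty$ is closed for the absolute weak topology $|\sigma|(\Linfty,\sca)$ (Hausdorff precisely because of class (S)), applies the abstract bipolar theorem for the dual pair $\langle\Linfty,\sca\rangle$ together with a reduction to positive functionals, and finally recovers $\CC$ from $\mathcal D$ by truncation and monotone convergence. You instead localise: for each finite $F\subset\CQ$ the measure $\mu_F$ is supported with support $S_F$ (your verification is right and, notably, uses finiteness of $F$ so that the exceptional sets form a finite union of polar events --- disjointness of $\CQ$ is not actually needed), the band $\ind_{S_F}\Lzero$ is order isomorphic to the dominated space $L^0_{\mu_F}$, the classical Brannath--Schachermayer theorem applies there, and order closedness \ref{BS1} is used twice, once per block to get $X_0\ind_{S_F}\in\CC$ and once to patch along the increasing net $X_0\ind_{S_F}\uparrow X_0$. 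Two small points you leave implicit but which do hold: the BS polar taken in $L^0_{\mu_F+}$ and the polar taken among $L^1_{\mu_F+}$-densities yield the same bipolar (truncate a polar element by $n$ and use monotone convergence --- this also matches how the paper itself states the dominated BS theorem via $\CC^\diamond\subset\ca_+$); and order convergence inside the band $\ind_{S_F}\Lzero$ implies order convergence in $\Lzero$ because a positive decreasing net in an ideal with infimum zero there has infimum zero in the ambient lattice. Your approach is more elementary and self-contained in that it reduces everything to the classical dominated theorem and avoids the locally convex-solid topology machinery and the external closedness lemma; the paper's argument is shorter given that machinery and feeds directly into the weak-closure statements of Corollary~\ref{prop:Grothendieck}.
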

\begin{proof}
Under the class (S) property, $\sca$ separates the points of $\Linfty$ (Proposition~\ref{prop}(1)). In particular, $\tau:=|\sigma|(\Linfty,\sca)$ is an order-continuous locally convex-solid Hausdorff topology.\footnote{~For the definition of locally-convex solid topologies, see \cite[p.\ 172]{AliBurk2}. Given a vector lattice $\CX$ and an ideal $\CY\subset\CX^\sim$, the associated absolute weak topology $|\sigma|(\CX,\CY)$-topology is generated by the lattice seminorms $\CX\ni x\mapsto\langle |x|,|y|\rangle$, and the dual space of $(\CX,|\sigma|(\CX,\CY))$ is $\CY$ (\cite[Definition 2.32 \& Theorem 2.33]{AliBurk}). In particular, if $\CP$ is of class (S) so that $\langle\Linfty,\sca\rangle$ is a dual pair, the topologies $|\sigma|(\Linfty,\sca)$ and $\sigma(\Linfty,\sca)$ have the same closed convex sets.} Suppose $\emptyset\neq\CC\subset\Lzeroplus$ is order closed. The inclusion
\begin{center}$\CC\subset\{X\in {L^0_{\mbf c}}_+\mid \forall\,\mu\in\CC^\diamond\cap\sca:~\int X\,d\mu\leq 1\}$\end{center}
holds by the definition of $\CC^\diamond$. Conversely, consider the set $\mathcal D:=\CC\cap\Linfty$. $\mathcal D$ is nonempty (because for each $X\in\CC$ and $k\in\N$, $X\wedge k\ind_\Omega\in\mathcal D$ by solidity), convex, solid, and order closed. By \cite[Lemma 2]{surplus}, $\mathcal D$ is $\tau$-closed and therefore also $\sigma(\Linfty,\sca)$-closed. If we set $\sigma_{\mathcal D}(\mu):=\sup_{Y\in\mathcal D}\int Y\,d\mu$, $\mu\in\sca$, the Bipolar Theorem \cite[Theorem 5.103]{Ali} implies 
\[\mathcal D=\{X\in\Linfty\mid \forall\,\mu\in\sca:~\sigma_{\mathcal D}(\mu)\le 1~\Rightarrow~\int X\,d\mu\le 1\}.\]
Fix $\mu\in\sca$ with $\sigma_{\mathcal D}(\mu)\le 1$. Let $f$ be a version of $\tfrac{d\mu}{d|\mu|}$ and set $B:=\{f>0\}$, i.e.\ the positive part $\mu^+$ of $\mu$ satisfies $d\mu^+=f\ind_B\,d|\mu|$. Then 
\begin{equation}\label{eq:supportfn}\sigma_{\mathcal D}(\mu^+)=\sup_{X\in\mathcal D}\int Xf\ind_B\,d|\mu|=\sup_{X\in\mathcal D}\int (X\ind_B)d\mu\le\sigma_\mathcal D(\mu)\le 1.\end{equation}
Here we have used that $\mathcal D$ is solid. Now, for $Y\in\Linftyplus$, we have $\int Y\,d\mu\le 1$ for all $\mu\in\sca$ with $\sigma_{\mathcal D}(\mu)\le 1$ if and only if $\int Y\,d\mu\le 1$ for all $\mu\in\sca_+$ with $\sigma_{\mathcal D}(\mu)\le 1$. 
We have proved
\begin{equation}\label{eq:onlypos}\mathcal D=\big\{Y\in\Linftyplus \big| \forall\,\mu\in\mathcal D^\diamond\cap \sca:~\int Y\,d\mu\le 1\big\}.\end{equation}
Finally, $\CC^\diamond=\mathcal D^\diamond$ and hence $\mathcal \CC^\diamond\cap\sca=\mathcal D^\diamond\cap\sca$ follows with monotone convergence.
Using order closedness and solidity of $\CC$ in the last equality, we infer 
\begin{align*} 
\{X\in {L^0_{\mbf c}}_+\mid \forall\,\mu\in\CC^\diamond\cap\sca:~\int X\,d\mu\leq 1\}&=\{X\in {L^0_{\mbf c}}_+\mid \forall\,k\in\N\,\forall\,\mu\in\CC^\diamond\cap \sca:~\int (X\wedge k\ind_\Omega)\,d\mu\leq 1\}\\
&=\{X\in {L^0_{\mbf c}}_+\mid\forall k\in\N:~X\wedge k\ind_\Omega\in\mathcal D\}=\CC.
\end{align*}
This is \ref{BS3}.
\end{proof}

\begin{proof}[Proof of Theorem~\ref{thm:BS}]
For statement (1), let $\emptyset\neq\CC\subset\Lzeroplus$ be convex and solid. If (i) holds and $\CC$ satisfies \ref{BS1}, $\CC$ satisfies \ref{BS3}  by Lemma~\ref{lem:BS}. Conversely, suppose $(X_\alpha)_{\alpha\in I}\subset \CC$ is a net such that $X_\alpha\overset o\to X\in\Lzeroplus$; $\overset o\to$ denotes order convergence, cf.\ Appendix~\ref{sec:prelim}. For $k>0$ arbitrary, we infer $(X_\alpha\wedge k\ind_\Omega)_{\alpha\in I}\subset\CC$ and $X_\alpha\wedge k\ind_\Omega\overset{o}\to X\wedge k\ind_\Omega$. By Proposition~\ref{prop:(A1)-(A3)}(3), $\sca$ can be identified with the order-continuous dual $(\Linfty)^\sim_n$ of $\Linfty$. Hence, we obtain for all $\mu\in\CC^\diamond\cap\sca$ that 
\[\int(X\wedge k\ind_\Omega)d\mu=\lim_{\alpha\in I}\int (X_\alpha\wedge k\ind_\Omega)d\mu\le 1.\]
Taking the limit $k\uparrow\infty$ on the left-hand side, $\int X\,d\mu\le 1$ holds for all $\mu\in\CC^\diamond\cap\sca$. Hence, if $\CC$ has property \ref{BS3}, it also has property \ref{BS1}.\\
Now assume that (ii) holds. In order to show that $\CP$ is of class (S), it suffices to verify that for all $A\in\CF$ with $\mbf c(A)>0$ there is $\mu\in\sca_+$ such that $\mu(A)>0$. 
Consider the convex, solid, and order-closed set \begin{center}$\CC:=\{X\in {L^0_{\mbf c}}_+\mid X\ind_A=0\}$.\end{center}
By \ref{BS3}, $\mathcal C=\{X\in\Lzeroplus\mid \forall\,\mu\in\CC^\diamond:~\int X\,d\mu\le 1\}$. Suppose we find $0\neq \mu_0\in\CC^\diamond$. Then 
\[\sup_{t>0}t\mu_0(A^c)=\sup_{t>0}\int t\ind_{A^c}\,d\mu_0\le1,\]
which is only possible if $\mu_0(A^c)=0$. Hence, $\mu_0(A)=\mu_0(\Omega)>0$. Otherwise, if $\CC^\diamond=\{0\}$, $\CC=\Lzeroplus$ has to hold and $\mbf c(A)=0$, a case that we have excluded. 

\smallskip

We now focus on the equivalence in (2). 
Suppose first that (i) holds. As $\ca$ separates the points of $\Linfty$, Proposition~\ref{prop}(1) implies that $\CP$ is of class (S). By (1), \ref{BS1} and \ref{BS3} are equivalent properties of convex and solid sets $\emptyset\neq\CC\subset\Lzeroplus$. \ref{BS3} is in turn equivalent to~\ref{BS2} under (i).

Now assume that (ii) holds. In order to show that $\sca={ca_{\mbf c}}$, it suffices to show that each $\mu\in\ca_+$ is supported. To this end, consider a net $(X_\alpha)_{\alpha\in\,I}$ such that $0\peq X_\alpha\uparrow X\in\Linfty$ in order. Clearly, $\int X_\alpha\,d\mu\uparrow\sup_{\alpha\in I}\int X_\alpha\,d\mu=:s\in\R_+$. Define \begin{center}$\CC:=\{Y\in {L^0_{\mbf c}}_+\mid \int Y\,d\mu\le s\}$,\end{center}
a solid and convex subset of ${L^0_{\mbf c}}_+$. Moreover, one verifies that $\CC$ has property \ref{BS2}, which means that $\CC$ is order closed as well \ref{BS1}. As $(X_\alpha)_{\alpha\in I}\subset\CC$, we infer that that $X\in\CC$ and that $\int X\,d\mu=s$. This shows order continuity of $\mu$ or equivalently supportedness of $\mu$ by Proposition~\ref{prop:(A1)-(A3)}(3).
\end{proof}

The following concluding corollary can be understood as a bridge to our discussion of the Fatou property of risk measures below. Recall that a subset $\emptyset\neq\CC\subset\Linfty$ is \emph{solid} if, for all $X,Y\in\Linfty$, $Y\in\CC$ and $|X|\peq |Y|$ together imply $X\in\CC$.

\begin{corollary}\label{prop:Grothendieck}
Let $\CP\subset\Delta(\CF)$ be nonempty.
\begin{itemize}
\item[\tn{(1)}]The following are equivalent:
\begin{itemize}
\item[\tn{(i)}]$\CP$ is of class \tn{(S)}. 
\item[\tn{(ii)}]A convex and solid set $\emptyset\neq\CC\subset\Linfty$ is order closed iff it is $\sigma(\Linfty,\sca)$-closed.
\end{itemize}
\item[\tn{(2)}]The following are equivalent:
\begin{itemize}
\item[\tn{(i)}]$\sca=\ca$. 
\item[\tn{(ii)}]A convex and solid set $\emptyset\neq\CC\subset\Linfty$ is order closed iff it is $\sigma(\Linfty,\ca)$-closed.
\end{itemize}
\end{itemize}
\end{corollary}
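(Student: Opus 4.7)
The plan is to prove both equivalences by relating order closure in $\Linfty$ to weak closure in $\sigma(\Linfty,\sca)$ (resp.\ $\sigma(\Linfty,\ca)$), using Proposition~\ref{prop:(A1)-(A3)}(3) to identify $\sca$ with the order-continuous dual $(\Linfty)^\sim_n$ (and the same for $\ca$ under the hypothesis $\sca=\ca$). The forward implication (i)$\Rightarrow$(ii) in both parts will run as in Lemma~\ref{lem:BS}: if $\emptyset\neq\CC\subset\Linfty$ is convex, solid, and order closed, I will invoke \cite[Lemma~2]{surplus} to conclude that $\CC$ is $|\sigma|(\Linfty,\sca)$-closed, hence also $\sigma(\Linfty,\sca)$-closed (the two topologies share the dual pair $\langle\Linfty,\sca\rangle$ and therefore the same closed convex sets). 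Conversely, if $\CC$ is $\sigma(\Linfty,\sca)$-closed, the order continuity of each $\mu\in\sca$ turns any order-convergent net in $\CC$ into a $\sigma(\Linfty,\sca)$-convergent one, giving order closure. Part (2)(i)$\Rightarrow$(ii) is this same argument verbatim with $\ca$ replacing $\sca$.

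For (1)(ii)$\Rightarrow$(i), I plan to invoke the ``it suffices'' step extracted in the proof of Theorem~\ref{thm:BS}(1)(ii)$\Rightarrow$(i): to show $\CP$ is of class (S), it is enough to produce, for each $A\in\CF$ with $\mbf c(A)>0$, a measure $\mu\in\sca_+$ with $\mu(A)>0$. The probe is the band $\CC:=\{Y\in\Linfty\mid Y\ind_A=0\}$, which is convex, symmetric-solid (as an ideal), and order closed, hence $\sigma(\Linfty,\sca)$-closed by hypothesis. Since $\ind_A\notin\CC$, the bipolar theorem applied to this linear subspace yields $\mu\in\sca$ vanishing on $\CC$ with $\int\ind_A\,d\mu>0$, so $|\mu|(A^c)=0$; because $\sca$ is a vector lattice (Definition~\ref{def:support}(3)), $\mu^+\in\sca_+$ then satisfies $\mu^+(A)\ge\mu(A)>0$.

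The main obstacle is (2)(ii)$\Rightarrow$(i). Fixing $\mu\in\ca_+$, I must show that $\mu$ is order continuous on $\Linfty$. Given a monotone net $0\peq X_\alpha\uparrow X$ in $\Linfty$ with $s:=\sup_\alpha\int X_\alpha\,d\mu$, the natural candidate $\tilde\CC:=\{Y\in\Linfty\mid\int|Y|\,d\mu\le s\}$ is convex and solid, but its order closedness is precisely the conclusion I seek, so a direct appeal to hypothesis (ii) would be circular. To break the deadlock, I will establish $\sigma(\Linfty,\ca)$-closedness of $\tilde\CC$ \emph{independently}, via the representation
\[\tilde\CC=\bigcap_{\nu\in\ca,\,|\nu|\peq_\CF\mu}\left\{Y\in\Linfty\,:\,\left|\int Y\,d\nu\right|\le s\right\},\]
an intersection of $\sigma(\Linfty,\ca)$-closed convex sets. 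The nontrivial inclusion $\supset$ will be handled by choosing $d\nu:=(\chi_{\{f\ge 0\}}-\chi_{\{f<0\}})\,d\mu$ for any representative $f$ of $Y$, so that $|\nu|=\mu\in\ca$ and $\int Y\,d\nu=\int|Y|\,d\mu$. Hypothesis (ii) then upgrades $\sigma(\Linfty,\ca)$-closedness to order closedness, yielding $X\in\tilde\CC$ and hence $\int X\,d\mu=s$. By Proposition~\ref{prop:(A1)-(A3)}(3) this certifies supportedness of $\mu$, and linearity gives $\sca=\ca$.
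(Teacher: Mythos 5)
Your proof is correct and follows essentially the same route as the paper's: the forward implications rest on \cite[Lemma 2]{surplus} together with the identification $\sca=(\Linfty)^\sim_n$ from Proposition~\ref{prop:(A1)-(A3)}(3), and the backward implications on separating a point from a convex, solid, order-closed probe set, exactly as in the proofs of Theorem~\ref{thm:BS} and the corollary itself. The one place you add substance is (2)(ii)$\Rightarrow$(i), which the paper only delegates to the proof of Theorem~\ref{thm:BS}: your representation of $\{Y\in\Linfty\mid\int|Y|\,d\mu\le s\}$ as an intersection of $\sigma(\Linfty,\ca)$-closed half-spaces indexed by signed measures $\nu$ with $|\nu|\peq_\CF\mu$ correctly establishes the weak closedness needed to invoke the hypothesis without circularity.
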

\begin{proof}
(1)(i) implies (1)(ii): $\CC$ is order closed iff $\mathcal D:=\{|X|\mid X\in\CC\}$ is order closed. By Theorem~\ref{thm:BS}, $\mathcal D$ is order closed if and only if 
\[\mathcal D=\{X\in \Lzeroplus\mid \forall\,\mu\in\mathcal D^\diamond\cap\sca:~\int X\,d\mu\le 1\}.\]
Since $\int |X|\,d\mu\le 1$ if and only if $\int XY\,d\mu\le 1$ holds for all $Y\in\Linfty$ with $\|Y\|_\Linfty\le 1$ we obtain
\[\CC=\{X\in \Linfty\mid |X|\in\mathcal D\}=\bigcap_{\mu\in\mathcal D^\diamond}\bigcap_{Y\in\Linfty:~\|Y\|_\Linfty\le 1}\{X\in \Linfty\mid \smallint XY\,d\mu\le 1\}.\]
The latter set is $\sigma(\Linfty,\sca)$-closed. Conversely, every $\sigma(\Linfty,\sca)$-closed convex and solid set is order closed because $\sca$ corresponds to the order continuous dual of $\Linfty$.\\
(1)(ii) implies (1)(i): Suppose $A\in\CF$ satisfies $\mu(A)=0$ for all $\mu\in\sca_+$. In particular, for each $\mu\in\sca$ and each $s\in\R$, 
\[\int s\ind_A\,d\mu=\int s\ind_A\tfrac{d\mu}{d|\mu|}d|\mu|=0.\]
We have to show that $\mbf c(A)=0$. To this end, consider the convex, solid, and order-closed set 
\[\CC:=\{X\in\Linfty\mid |X|\ind_{A}=0\}.\]
By (1)(ii) $\CC$ is $\sigma(\Linfty,\sca)$-closed. 
The Separating Hyperplane Theorem allows to represent $\CC$ as 
\begin{center}$\{X\in \Linfty\mid\forall\,\mu\in\sca:~\sigma_\CC(\mu)<\infty\,\implies\,\int X\,d\mu\le\sigma_\CC(\mu)\}$,\end{center}
where $\sigma_\CC(\mu):=\sup_{X\in\CC}\int X\,d\mu\ge 0$. Thus we infer that 
$\R\cdot\ind_A\subset\CC$. Recalling the definition of $\CC$, $\mbf c(A)=0$ follows. This entails that $\CP$ is of class (S).

\smallskip

(2)(i) implies (2)(ii): If (i) holds, $\CP$ is again of class (S). By (1), a convex and solid set $\emptyset\neq\CC\subset\Linfty$ is order closed iff it is $\sigma(\Linfty,\sca)$-closed. The assertion follows because $\sigma(\Linfty,\sca)=\sigma(\Linfty,\ca)$.\\
(2)(ii) implies (2)(i): That each $\mu\in\ca$ is supported given that assertion (2)(ii) holds is derived like in the proof of Theorem~\ref{thm:BS}. 
\end{proof}

\begin{remark}
A remarkable model-free bipolar theorem in the spirit of Brannath-Schachermayer has recently been shown and applied in~\cite{BartlBipolar}.
It shares certain traits with the results of this section (the focus on the positive cone of a function space, solidity and convexity of the sets in question), but there are also substantial differences.
Firstly, the functions in question may attain the value $+\infty$. The authors therefore have in mind the ambient set of extended real-valued measurable functions that are bounded below. 
This is not a real vector lattice.
Secondly, the underlying measurable space has a topological structure. Thirdly, while the concept of ``$\liminf$-closedness" in \cite{BartlBipolar} is seemingly related to sequential order closedness (i.e., a set contains every limit of its order-convergent sequences; see also \cite[Remark 6]{BartlBipolar}), a full comparison to \ref{BS1} is not possible because of the lacking vector lattice structure.
\end{remark}

\medskip

\section{Superhedging, aggregation, and the Fatou property under uncertainty}\label{sec:weak*}

The present section is devoted to issues around superhedging, aggregation, and various aspects of the Fatou property of convex risk measures on $\Linfty$. All those topics are closely related, and the central question whose answer turns out to be crucial for dealing with the mentioned topics is under which condition $\Linfty$ carries a weak* topology.

\subsection{When does $\Linfty$ carry a weak* topology?}\label{sec:weak:star}

First and foremost, we need to recall that a vector lattice $(\CX,\peq)$ is {\em Dedekind complete} if every nonempty $\CC\subset\CX$ which has an upper bound $y\in\CX$, i.e.\ $x\peq y$ holds for all $x\in\CC$, has a supremum, i.e.\ a least upper bound denoted by $\sup\CC$. If $\CX$ also has the \textit{countable sup property}, that is, for every set $\CC\subset\CX$ whose supremum exists, there is a countable subset $\mathcal D\subset\CC$ such that $\sup\mathcal D=\sup\CC$, $\CX$ is said to be \textit{super Dedekind} complete.
For a moment consider a single probability measure $\P^*$ on $(\Omega,\CF)$. It is well known that $L^\infty_{\P^*}$ is super Dedekind complete, see e.g.\ \cite[Theorem 1.2.10]{Torgersen}. This fact is often used when dealing with (essential) suprema as it allows to realise such a supremum along a monotone sequence provided the underlying set of random variables is (upwards) directed; e.g., see the proofs of \cite[Theorems 9.9, 9.22, 11.2]{FS}.
More generally, by \cite[Lemma 8]{surplus}, Dedekind completeness of $\Linfty$ is equivalent to Dedekind completeness of $({L^0_{\mbf c}},\peq)$, which in turn is equivalent to Dedekind completeness of any ideal $\CX\subset {L^0_{\mbf c}}$ with the property $\Linfty\subset\CX$. The same equivalences hold for super Dedekind completeness. 

\begin{example}\label{ex:Dedekind:complete}
\begin{enumerate}
    \item Consider the lattice homomorphism $J\colon \Linfty\to\CY\subset\prod_{\Q\in\CQ}L^\infty_\Q$ defined in Section~\ref{sec:product}, where $\CP$ is of class (S) and $\CQ\approx\CP$ is a disjoint supported alternative. Using super Dedekind completeness of $L^\infty_\Q$, $\Q\in\CQ$, $\CY$ is shown to be Dedekind complete. Hence $\Linfty$ is also Dedekind complete if $J$ is onto and $\Linfty$ therefore lattice isomorphic to a product space. Otherwise, we can invoke \cite[Theorem 1.41]{AliBurk} to observe that $\CY$ is the so-called {\em Dedekind completion} of $\Linfty$; cf.\ Appendix~\ref{sec:prelim}. 
   \item Consider the context of volatility uncertainty introduced in Section~\ref{sec:voluncertainty}. In light of our discussion in Section~\ref{sec:aggregation:appl}, \cite[Section 5]{STZ} shows that $\Linfty$ is Dedekind complete if the underlying $\sigma$-algebra is sufficiently enriched. This procedure is also commented on in the additional Appendix~\ref{sec:enlargements}.
\end{enumerate}
\end{example}

As a first step, we prove a result of potential independent interest which we could not find in the literature. Note that the primal space $\CY$ in the following Proposition~\ref{prop:onlyDC} is \textit{not} assumed to be a lattice. 
The notion of archimedeanity is introduced in Appendix~\ref{sec:prelim}.

\begin{proposition}\label{prop:onlyDC}
Suppose $(\CY,\Norm_{\CY})$ is a normed space whose dual space is given by an Archimedean Banach lattice 
$(\CX,\peq,\Norm_\CX)$. If $\CX_+$ is weak* closed, then $\CX$ is Dedekind complete.
\end{proposition}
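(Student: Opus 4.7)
The plan is a classical weak*-compactness argument: reduce the problem to finding the supremum of a norm-bounded upward-directed family, extract it as a weak* cluster point, and use the weak* closedness of $\CX_+$ to verify both the upper-bound and the least-upper-bound property.

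First I would set up the reduction. Let $A \subset \CX$ be nonempty and bounded above by some $y \in \CX$. Fix $x_0 \in A$ and replace $A$ by $A' := \{x_0 \vee a : a \in A\}$. Since $x_0 \in A$, the sets $A$ and $A'$ share their upper bounds, and in particular the existence of $\sup A$ is equivalent to that of $\sup A'$. Now $A' \subset [x_0, y]$. Let $D$ denote the collection of finite suprema of elements of $A'$; it is upward directed under $\peq$, lies in the order interval $[x_0, y]$, and has the same upper bounds as $A$. Because order intervals in a Banach lattice are norm bounded, $D$ is norm bounded.

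Next I would apply Banach--Alaoglu. As $\CX = \CY^*$ with $\CY$ a normed space, every norm-bounded subset of $\CX$ has weak*-compact closure. Viewing $D$ as a net indexed by itself (with its order), compactness delivers a weak* cluster point $x^* \in \CX$. For each $d_0 \in D$, the tail $\{d \in D : d_0 \peq d\}$ is cofinal in $D$ and sits inside the weak*-closed affine set $d_0 + \CX_+$; a net eventually in a closed set has all its cluster points there, whence $x^* - d_0 \in \CX_+$, i.e.\ $d_0 \peq x^*$. The identical reasoning applied to $\{z - d : d \in D\} \subset \CX_+$ for any upper bound $z \in \CX$ of $A$ yields $x^* \peq z$. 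Therefore $x^* = \sup A$, establishing Dedekind completeness.

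The only point requiring care is the cluster-point bookkeeping: the directed family $D$ of finite suprema, though potentially uncountable and treated merely as a net rather than a sequence, must be recognised as lying in a norm-bounded order interval, and the weak* closedness of $\CX_+$ must be propagated cleanly through cluster-point limits of its tails. The Archimedean hypothesis on $\CX$ plays no prominent additional role here beyond what is built into the Banach lattice structure; it ensures the coherence between order and norm that underlies, among other things, the norm-boundedness of $[x_0, y]$.
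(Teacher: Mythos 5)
Your argument is correct, but it follows a genuinely different route from the paper's. The paper's proof embeds $\CX$ into its Dedekind completion $\CX^\delta$ (this is where the Archimedean hypothesis is invoked), forms the supremum $\ell^*$ of a bounded increasing net there, and pulls it back into $\CX$ by constructing the functional $T(\ell^*)(\mu)=\inf\{\mu(X)\mid X\in\CX,\ \ell^*\tle J(X)\}$ on $\CX^*$ and identifying its restriction to $\CY$ with an element $X^*\in\CY^*=\CX$; the weak* closedness of $\CX_+$ enters through a Hahn--Banach separation producing a set $\mathcal Z\subset\CY$ of positive functionals that determines the order, against which $X^*$ is verified to be the least upper bound. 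You bypass the completion entirely: after reducing to an upward-directed family $D$ inside a norm-bounded order interval, Banach--Alaoglu yields a weak* cluster point, and weak* closedness of $\CX_+$ (applied to the translates $d_0+\CX_+$ and to $\CX_+$ itself) identifies that cluster point as the supremum. All the steps check out: the upper bounds of $A$, $A'$ and $D$ coincide, order intervals are norm bounded by the lattice-norm property, and a cluster point of a net that is eventually in a weak* closed set lies in that set. Your remark that the Archimedean hypothesis carries no independent weight is also accurate---every Banach lattice is automatically Archimedean, and your argument only uses the order--norm compatibility. What the paper's longer route buys is an explicit description of the supremum as an infimum over majorants together with the extension map $T\colon\CX^\delta\to\CX$, which matches the Dedekind-completion machinery used elsewhere in the paper (e.g.\ in Proposition~\ref{prop:(A1)-(A3)} and Appendix~\ref{sec:enlargements}); your route is shorter and more self-contained.
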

\begin{proof}
We can always assume $\CY\subset \CX^*=\CX^\sim$ using the canonical isometry. In particular, we may view $\CY$ as a subset of a vector lattice. 
As $\CX$ is Archimedean, it admits a Dedekind completion $(\CX^\delta,\tle)$, and there is an injective lattice homomorphism $J\colon\CX\to\CX^\delta$ such that the image $J(\CX)$ is an order dense and majorising vector sublattice of $\CX^\delta$; cf.\ \cite[Theorem 1.41]{AliBurk}. It can be verified that the function
\[\Norm_{\CX^\delta}\colon{\CX^\delta}\to\R_+,\quad \ell\mapsto\inf\{\|X\|_\CX\mid X\in\CX,\,|\ell|\tle J(X)\},\]
is a lattice norm on $({\CX^\delta},\tle)$.\\
Now, for all $\ell\in{\CX^\delta}$ we would like to construct a linear functional $T(\ell)\in\CY^*=\CX$. To this end, we first consider  
\[T(\ell)(\mu):=\inf_{X\in\CX:\,\ell\tle J(X)}\mu(X),\quad \ell\in\CX^\delta_+,\,\mu\in\CX^*_+,\]
and note that $T(\ell)(\mu)\in\R_+$ due to $J(\CX)$ being majorising in ${\CX^\delta}$. Moreover, this map is positively homogeneous in $\mu$. Setting $\CC_\ell:=\{X\in\CX\mid \ell\tle J(X)\}$, we have for $\mu,\nu\in\CX^*_+$ that 
\[T(\ell)(\mu+\nu)=\inf_{X\in\CC_\ell}\mu(X)+\nu(X)\ge \inf_{X,Y\in\CC_\ell}\mu(X)+\nu(Y)=T(\ell)(\mu)+T(\ell)(\nu).\]
Furthermore,
\[T(\ell)(\mu)+T(\ell)(\nu)\ge \inf_{X,Y\in\CC_\ell}(\mu+\nu)(X\wedge Y)\ge \inf_{Z\in\CC_\ell}(\mu+\nu)(Z)=T(\ell)(\mu+\nu).\]
In conclusion, $T(\ell)(\mu+\nu)=T(\ell)(\mu)+T(\ell)(\nu)$. 
Observe also that 
\begin{equation}\label{eq:bound1}|T(\ell)(\mu)|=\inf_{X\in\CC_\ell}\mu(X)\le \inf_{X\in\CC_\ell}\|X\|_\CX\cdot \|\mu\|_{\CX^*}=\|\ell\|_{\CX^\delta}\|\mu\|_{\CX^*},\quad \ell\in\CX^\delta_+,\,\mu\in\CX^*_+.\end{equation}
Now, for $\ell\in\CX^\delta_+$ and $\mu\in\CX^*$, we set $T(\ell)(\mu)=T(\ell)(\mu_1)-T(\ell)(\mu_2)$ for arbitrary $\mu_1,\mu_2\in\CX^*_+$ with $\mu=\mu_1-\mu_2$. Using the additivity of $T(\ell)$ on $\CX^*_+$ for $\ell\in\CX^\delta_+$, one proves that $T(\ell)$ is well defined and linear on $\CX^*$. 
At last, considering $\ell\in{\CX^\delta}$, we define a linear functional by $T(\ell)(\mu):=T(\ell^+)(\mu)-T(\ell^-)(\mu)$, $\mu\in\CX^*$, and obtain the following generalisation of \eqref{eq:bound1}:
\[\label{eq:bound2}|T(\ell)(\mu)|\le|T(\ell^+)(\mu)|+|T(\ell^-)(\mu)|\le T(\ell^+)(|\mu|)+T(\ell^-)(|\mu|)\le \|\ell\|_{\CX^\delta} \|\mu\|_{\CX^*}.\]
In particular, $T(\ell)|_\CY$ can be identified with an element in $\CX$. \\
Next, note that the assumption that $\CX_+$ is weak* closed together with the Hahn-Banach Separation Theorem implies the existence of a set $\mathcal Z\subset\CY$ such that 
\begin{equation}\label{eq:cone}\CX_+=\bigcap_{\mu\in\mathcal Z}\{X\in\CX\mid\mu(X)\ge 0\}.\end{equation}
In particular, $\mathcal Z\subset\CX^*_+$ has to hold and $X\succeq Y$ is equivalent to $\mu(X)\ge \mu(Y)$ for all $\mu\in\mathcal Z$.\\
For Dedekind completeness of $\CX$, it suffices to show that a nondecreasing and order bounded net $(X_\alpha)_{\alpha\in I}\subset\CX$ has a supremum in $\CX$. Without loss of generality, we may assume $(X_\alpha)_{\alpha\in I}\subset\CX_+$. As $(J(X_\alpha))_{\alpha\in I}$ is also nondecreasing and order bounded, we can set  $\ell^*:=\sup_{\alpha\in I}J(X_\alpha)$ in ${\CX^\delta}$. $T(\ell^*)|_\CY$ defines an element of $\CY^*$. 
As $\CY^*=\CX$ by assumption, there exists $X^*\in\CX$ such that
\begin{center}$\forall\,\mu\in\CY:~T(\ell^*)(\mu)=\mu(X^*)$.\end{center}
Also, for all $\mu\in\mathcal Z$, all $\alpha\in I$, and all $Y\in\CC_{\ell^*}$, 
\[\mu(X_\alpha)\le\mu(Y).\]
Taking the infimum over all $Y\in\CC_{\ell^*}$ on the right-hand side and rearranging the inequality, we infer
\[\mu(X^*-X_\alpha)=T(\ell^*)(\mu)-\mu(X_\alpha)\ge 0.\]
By \eqref{eq:cone}, $X^*$ is an upper bound of $\{X_\alpha\mid\alpha\in I\}$. If $Y\in\CX$ is an upper bound of $\{X_\alpha\mid\alpha\in I\}$, $\ell^*\tle J(Y)$ has to hold in ${\CX^\delta}$. We infer that $\mu(X^*)=T(\ell^*)(\mu)\le\mu(Y)$ for all $\mu\in\mathcal Z$. Arguing as before, $X^*\peq Y$ follows, which concludes the proof that $X^*=\sup_{\alpha\in I}X_\alpha$ in $\CX$.
\end{proof}

The following theorem establishes the relation between $\Linfty$ carrying a weak* topology and the conjunction of $\CP$ being of class \tn{(S)} and $\Linfty$ being Dedekind complete.

\begin{theorem}\label{thm:perfect}
Let $\CP$ be a set of probability measures on $(\Omega,\CF)$. Then the following are equivalent:
\begin{itemize}
\item[\tn{(1)}]$\CP$ is of class \tn{(S)} and $\Linfty$ is Dedekind complete. 
\item[\tn{(2)}]$\Linfty$ is the dual space of $\sca$.
\item[\tn{(3)}]$\Linfty$ is the dual space of a normed vector lattice.  
\item[\tn{(4)}]$\Linfty$ is the dual space of a Banach lattice.
\item[\tn{(5)}]$\Linfty$ is perfect, i.e.\ $((\Linfty)^\sim_n)^\sim_n=\Linfty$ via the embedding \eqref{eq:embedding}.
\item[\tn{(6)}]$\Linfty$ carries a locally convex-solid order-continuous Hausdorff topology and is Dedekind complete.
\end{itemize}
\end{theorem}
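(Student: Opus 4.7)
The plan is to establish the cycle (1) $\Rightarrow$ (2) $\Rightarrow$ (4) $\Rightarrow$ (3) $\Rightarrow$ (1), and to derive (1) $\Leftrightarrow$ (5) and (1) $\Leftrightarrow$ (6) as corollaries. The implications (2) $\Rightarrow$ (4) and (4) $\Rightarrow$ (3) are immediate, since $(\sca, TV)$ is a Banach lattice and every Banach lattice is a normed vector lattice. The substantive work lies in (1) $\Rightarrow$ (2) and (3) $\Rightarrow$ (1).

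For (1) $\Rightarrow$ (2), the natural candidate is the integration map $\Phi\colon\Linfty\to\sca^*$, $\Phi(X)(\mu):=\int X\,d\mu$. Under class \tn{(S)}, Proposition~\ref{prop}(1) ensures that $\sca$ separates the points of $\Linfty$, so $\Phi$ is an injective lattice homomorphism; testing against Jordan components of elements of $\sca$ shows that $\Phi$ is norm-preserving. The main obstacle is surjectivity. I would invoke the disjoint supported alternative $\CQ$ from Lemma~\ref{lem:disjoint:sup:alt} together with the embedding $J\colon\Linfty\hookrightarrow\CY$ from Section~\ref{sec:product}, where $J(\Linfty)$ is order dense and majorising in $\CY$. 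Dedekind completeness of $\Linfty$ then forces $J(\Linfty)=\CY$ by uniqueness of Dedekind completions: for $Y\in\CY_+$, order density writes $Y$ as a supremum of elements of $J(\Linfty)$, Dedekind completeness of $J(\Linfty)$ produces that supremum inside $J(\Linfty)$, and a second application of order density excludes any strict inequality. A Radon--Nikodym decomposition of supported measures along the disjoint supports $(S(\QW))_{\QW\in\CQ}$ identifies $\CY$ isometrically with $\sca^*$, delivering (2).

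For (3) $\Rightarrow$ (1), Proposition~\ref{prop:onlyDC} applied to the dual pairing $\langle\CY,\Linfty\rangle$ yields Dedekind completeness of $\Linfty$, since the positive cone of any dual Banach space is weak*-closed. To obtain class \tn{(S)} I would show that the predual $\CY$ embeds canonically into the order-continuous dual $(\Linfty)^\sim_n$: with Dedekind completeness established, every order-bounded increasing net in $\Linfty$ has a supremum to which it converges weak*, so every element of $\CY$ acts order continuously. As $\CY$ separates the points of $\Linfty$ by the predual relation and $(\Linfty)^\sim_n=\sca$ by Proposition~\ref{prop:(A1)-(A3)}(3), $\sca$ separates points, and Proposition~\ref{prop}(1) delivers class \tn{(S)}.

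The remaining equivalences fall out of Proposition~\ref{prop:(A1)-(A3)}(3). For (1) $\Leftrightarrow$ (5): under (1), assertion (2) gives $\Linfty=\sca^*$, and countable additivity of the measures in $\sca$ combined with dominated convergence implies that every $X\in\Linfty$ acts order continuously on $\sca$, whence $\Linfty=\sca^\sim_n=((\Linfty)^\sim_n)^\sim_n$; conversely, perfectness forces $\sca$ to separate the points of $\Linfty$ and provides Dedekind completeness, as $((\Linfty)^\sim_n)^\sim_n$ is Dedekind complete being the order-continuous dual of a vector lattice. For (1) $\Leftrightarrow$ (6): under (1), the absolute weak topology $|\sigma|(\Linfty,\sca)$ is locally convex-solid, order continuous, and Hausdorff; conversely, the continuous dual of any such topology is contained in $(\Linfty)^\sim_n=\sca$, Hausdorffness forces $\sca$ to separate points, which together with the assumed Dedekind completeness of $\Linfty$ gives (1).
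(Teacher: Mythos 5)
Your proposal is correct, but it reorganises the argument and substitutes different key lemmas at the two substantive steps, so a comparison is worthwhile. For \tn{(1)}$\Rightarrow$\tn{(2)} the paper does not use the product space at all: it passes through Luschgy's characterisation of weak domination by a localisable measure $\mu=\sum_{\Q\in\CQ}\Q$, identifies $\sca$ with $L^1_\mu$, and quotes Fremlin's $L^1_\mu$--$L^\infty_\mu$ duality for localisable measures. Your route---forcing $J(\Linfty)=\CY$ from order density, majorisation and Dedekind completeness, and then identifying $\CY$ with $\sca^*$ by decomposing each $\mu\in\sca$ into its components along the disjoint supports---is legitimate (the paper itself uses the ``order dense, majorising, Dedekind complete sublattice is an ideal, hence everything'' argument in Theorem~\ref{thm:disjointsupp}(3)), and it has the merit of staying inside the paper's own constructions; the cost is that the isometry $\CY\cong\sca^*$ silently repackages the same $L^1$--$L^\infty$ duality fibrewise over $\CQ$ and deserves a line of justification (every $\mu\in\sca$ is an $\ell^1$-sum of components $\mu_\Q\ll\Q$, since the band generated by a single $\Q$ is $\{\nu\mid\nu\ll\Q\}$). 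For the return trip, the paper proves \tn{(4)}$\Rightarrow$\tn{(5)}$\Rightarrow$\tn{(6)}$\Rightarrow$\tn{(1)}, obtaining Dedekind completeness from $\CY^*=\CY^\sim$ and the Dedekind completeness of order duals; you instead collapse this into a direct \tn{(3)}$\Rightarrow$\tn{(1)} via Proposition~\ref{prop:onlyDC}, which the paper only deploys later for Theorem~\ref{thm:Grothendieck}. This works, and has the advantage of handling a merely normed (non-complete) predual without the detour through norm completion; but be precise about the hypothesis of Proposition~\ref{prop:onlyDC}: it is not that ``the positive cone of any dual Banach space is weak*-closed'', but that $\Linfty_+$ is the dual cone of $\CY_+$ under the duality induced by the lattice structure of the predual, which is exactly what statement \tn{(3)} asserts. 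Likewise your claim that every element of $\CY$ acts order continuously requires the small Alaoglu/cluster-point argument showing that an increasing order-bounded net converges weak* to its supremum; once that is spelled out, your derivation of class \tn{(S)} from the separation property of $\sca=(\Linfty)^\sim_n$ matches the paper's. The equivalences with \tn{(5)} and \tn{(6)} are handled essentially as in the paper.
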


For the proof of Theorem~\ref{thm:perfect} we will need to introduce the following notions:

\begin{definition}\label{def:localisable}
Let $(\Omega,\CF)$ be a measurable space.
\begin{enumerate}[(1)]
\item A semi-finite\footnote{~A measure $\mu\colon\CF\to[0,\infty]$ is semi-finite if, for all $B\in\CF$ with $\mu(B)=\infty$, there is $A\in\CF$ such that $A\subset B$ and $0<\mu(A)<\infty$.} measure $\mu\colon\CF\to[0,\infty]$ is \emph{localisable} if the Boolean algebra $\{\ind_A\mid A\in\CF\}\subset L^\infty_\mu$ is Dedekind complete: for each nonempty set $\CA\subset \CF$ there is $B\in\CF$ such that $\mu(\ind_A>\ind_B)=0$ for all $A\in\CA$, and $\mu(\ind_B> \ind_C)=0$ holds for every event $C\in\CF$ with that property. 
\item A nonempty set $\CP$ of probability measures on $(\Omega,\CF)$ is \emph{weakly dominated} if it is majorised by a localisable measure $\mu$. 
\end{enumerate}
\end{definition}

\begin{proof}[Proof of Theorem~\ref{thm:perfect}]
(1) implies (2): By \cite[363M, Theorem]{Fremlin3}, Dedekind completeness of $\Linfty$ is equivalent to Dedekind completeness of the Boolean algebra $\{\ind_A\mid A\in\CF\}\subset\Linfty$.
If $\CP$ is of class (S), Lemma~\ref{lem:Luschgy} implies the existence of a majorised set of probability measures $\CQ\approx\CP$. The latter is weakly dominated by the equivalence of (B.6) and (B.1) in \cite[Theorem 2]{Luschgy}. By (B.7) in the same result, the localisable majorising measure can be chosen as 
$\mu=\sum_{\Q\in\CQ}\Q$ for any disjoint supported alternative $\CQ$ to $\CP$. Moreover, using (A.3) in \cite{Luschgy}, $\sca$ my be identified with $L^1_\mu$. Finally, by \cite[243G, Theorem]{Fremlin2}, $\Linfty=L^\infty_\mu=(L^1_\mu)^*=\sca^*$.

(2) implies (3): This follows from the fact that $\sca\subset{ba_{\mbf c}}$ is an ideal and thus a lattice in its own right. 

(3) implies (4): The norm completion of a normed vector lattice is a Banach lattice. Moreover, the dual space is preserved under norm completion. 

(4) implies (5): 
Let $(\CY,\tle,\Norm_\CY)$ be a Banach lattice whose dual is $(\Linfty,\peq,\Norm_\Linfty)$. Using \cite[Proposition 1.3.7]{MeyNie}, we again observe $\CY^*=\CY^\sim$. By \cite[Theorem 1.67]{AliBurk}, this entails Dedekind completeness of $\Linfty$. Moreover, as $\CY$ separates the points of $\Linfty$, it is lattice isomorphic to a sublattice of $(\Linfty)^\sim_n$ by \eqref{eq:embedding}; cf.\ \cite[p.\ 62]{AliBurk2}. Consequently, $(\Linfty)^\sim_n$ separates the points of $\Linfty$. Combine this with the monotonic completeness of $\Linfty$ to deduce that $\Linfty$ is perfect (\cite[Theorem 2.4.22]{MeyNie}).

(5) implies (6): Perfectness of a space is known to imply Dedekind completeness; cf.\ \cite[p.\ 63]{AliBurk2}.
Moreover, as $(\Linfty)^\sim_n$ separates the points of $((\Linfty)^\sim_n)^\sim_n$, the assumption implies that the convex-solid order-continuous topology $|\sigma|(\Linfty,(\Linfty)^\sim_n)$ is Hausdorff. 

(6) implies (1): Let $\tau$ be the locally convex-solid order-continuous Hausdorff topology on $\Linfty$ whose existence is claimed in (6). Then the dual space $(\Linfty,\tau)^*$---which separates the points of $\Linfty$ because of the Hausdorff property---may be identified with an ideal of $(\Linfty)^\sim_n$ by assumption and \cite[Theorem 2.22]{AliBurk}. The latter space therefore also separates the points of $\Linfty$. In conclusion, (6) implies the class (S) property of $\CP$ by Proposition~\ref{prop}(1). 
\end{proof}

\subsection{Superhedging, essential suprema, and aggregation versus class \tn{(S)} and Dedekind completeness}\label{sec:aggregation:appl}

In the following we relate the topic of Section~\ref{sec:weak:star} to superhedging, the existence of essential suprema, and eventually the aggregation of random variables. Given a suitable filtered robust model $(\Omega, (\CF_t)_{t\in \mbb T},\CF, \CP)$ ($\mbb T=\{0,\ldots, T\}$ or $\mbb T=[0,T]$) and a random endowment $X$ at terminal time $T$, a natural attempt is to make sense of the following formula: \begin{equation}\label{eq:superheding}Y_t:=\underset{\Q\in \CP}{\operatorname{ess\, sup}}\, \E_\Q[X|\CF_t],\quad  t\in \mbb T. \end{equation} 
Provided $(Y_t)_{t\in \mbb T}$ is a well-defined process, some version will typically turn out to be a well-behaved $\CP$-super\-martin\-gale which in turn can be decomposed into a type of $\CP$-martingale and a remainder term. The $\CP$-martingale allows the interpretation as the value process of some investment strategy and thus corresponds to a desired superhedge, see for instance \cite{NutzSonerSuper}. \eqref{eq:superheding} is commonly understood as there being a random variable $Y_t$ such that, for each $\P\in \CP$, \begin{equation}\label{eq:superhedging1}Y_t=\underset{\Q\in \CP(t,\P)}{\operatorname{ess\, sup}}\, \E_\Q[X|\CF_t]\quad \mbox{$\P$-a.s.},\end{equation} where $\CP(t,\P):=\{\Q\in \CP\mid\forall\,A\in\CF_t:~\Q(A)=\P(A)\}$. Note that the essential supremum on the right-hand side of \eqref{eq:superhedging1} is defined as usual under the probability $\P$. Thus the existence of $Y_t$ as in \eqref{eq:superhedging1} is a special case of aggregating random variables as described in \cite{STZ}:   
\begin{quote}
\small Since for each probability measure we have a well developed theory, for simultaneous stochastic analysis, we are naturally led to the following problem of aggregation. Given a family of random variables or stochastic processes, $X^\P$, indexed by probability measures $\P$, can one find an aggregator $X$ that satisfies $X=X^\P$, $\P$-almost surely for every probability measure $\P$? ... Once aggregation is achieved, then essentially all classical results of stochastic analysis generalize ...\quad\cite[p.\ 1854]{STZ} \end{quote}
Indeed, the feasibility of aggregating suitably consistent random variables---which motivates \cite{Cohen,STZ}---is essential in many applications of quasi-sure analysis, cf., e.g., \cite{Margarint,NutzSonerSuper, STZ11}. 
The same question is also tackled earlier in robust statistics \cite{Torgersen} following \cite[Definition 2.3]{Perlman}, its most relevant application being the equivalence of pairwise sufficiency and sufficiency (cf.\ \cite[Theorem 1.5.5]{Torgersen}). We adopt the terminology from this line of literature. 
\begin{definition}\label{def:consistent}Let $\CP\subset\Delta(\CF)$ be a nonempty set of probability measures and let $\CR\subset\Delta(\CF)\cap\ca$. 
\begin{enumerate}[(1)]
\item A family $(X^\Q)_{\Q\in\CR}\subset\Lzero$ is \emph{$\CR$-consistent} if, for all $\mf F\subset\CR$ with $|\mf F|=2$, there is $X^{\mf F}\in\Lzero$ such that, for all $\Q\in\mf F$, $\Q(X^\Q=X^{\mf F})=1$. 
\item A family $(X^\Q)_{\Q\in\CR}\subset\Lzero$ is \emph{$\mf R$-coherent} if there is $X\in\Lzero$ such that, for all $\Q\in\CR$, 
$\Q(X=X^\Q)=1$. The equivalence class $X$ is called an \emph{aggregator} of $(X^\Q)_{\Q\in\CR}\subset\Lzero$.
\end{enumerate}
\end{definition}

Clearly, $\CR$-coherence necessitates $\CR$-consistency. The important question is whether and under which condition $\CR$-consistency suffices to ensure $\CR$-coherence. It is known that this question is closely related to the subject of Section~\ref{sec:weak:star}. Indeed, for instance \cite[Lemme 4]{Luschgy2} shows that, for $\mf R\approx\CP$, the statement
\begin{itemize}
\item[\namedlabel{AGR}{\tn{\textbf{(AG)}}}]$(X^\Q)_{\Q\in\CR}\subset\Lzero$ is \emph{$\CR$-consistent} iff $(X^\Q)_{\Q\in\CR}\subset\Lzero$ is \emph{$\CR$-coherent}
\end{itemize}
is equivalent to $\Linfty$ being the dual space of band$(\CR)$, the band generated by $\CR$ in $\ca$. In the latter case, property (3) in Theorem~\ref{thm:perfect} is satisfied. This shows that the aggregation property \ref{AGR} is another equivalent to (1)--(6)  in Theorem~\ref{thm:perfect}.
In particular, aggregation and thus the existence of an essential supremum and of superhedging strategies following \cite{STZ} is equivalent to $\CP$ being of class \tn{(S)} and $\Linfty$ being Dedekind complete. 

\smallskip

\subsection{Super Dedekind completeness and the Kreps-Yan property}\label{sec:dominated}

Recall the definition of super Dedekind completeness in Section~\ref{sec:weak:star}. As mentioned there, super Dedekind completeness is an important feature used in many proofs of results under dominated uncertainty. We know that Dedekind completeness of $\Linfty$ is still possible when $\CP$ is nondominated (see Example~\ref{ex:Dedekind:complete}), and in fact a crucial property in view of superhedging and aggregation of random variables (see Section~\ref{sec:aggregation:appl}). Thus a natural question is whether we might also save the stronger property of super Dedekind completeness of $\Linfty$ for nondominated $\CP$. However, Proposition~\ref{thm:countsup} below shows that this is impossible. Indeed, $\Linfty$ is super Dedekind complete if and only if $\CP\approx\P^*$ for a single probability measure $\P^*$, and hence $\Linfty=L^\infty_{\P^*}$. Moreover, we will study the close relation between the Kreps-Yan property under uncertainty and  super Dedekind completeness of $\Linfty$.

\begin{proposition}\label{thm:countsup}
For a nonempty set $\CP\subset\Delta(\CF)$, the following are equivalent:
\begin{itemize}
\item[\tn{(1)}] $\Linfty$ is super Dedekind complete.
\item[\tn{(2)}] $\CP$ is dominated.
\item[\tn{(3)}] Each measure $\mu\in\ca$ is supported, i.e.\ $\sca=\ca$, and $\Linfty$ has the countable sup property.
\item[\tn{(4)}] There is a strictly positive linear functional $\xi\colon\Linfty\to\R$. 
\end{itemize}
\end{proposition}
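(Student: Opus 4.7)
My plan is to establish all four implications via the hub $(2)$, i.e.\ to prove $(2)\Leftrightarrow(1)$, $(2)\Leftrightarrow(3)$ and $(2)\Leftrightarrow(4)$ separately. The implications starting from $(2)$ are routine: $(2)\Rightarrow(1)$ uses $\Linfty=L^\infty_{\P^*}$ and classical super Dedekind completeness; $(2)\Rightarrow(3)$ follows because Radon--Nikodym gives every $\mu\in\ca$ a density whose positivity set is a version of the support, and $L^\infty_{\P^*}$ has the countable sup property; and $(2)\Rightarrow(4)$ takes $\xi:=\E_{\P^*}[\cdot]$.

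For $(1)\Rightarrow(3)$, given $\mu\in\ca_+$ (the signed case reduces to $|\mu|$), I would consider $\CC:=\{\ind_A\mid A\in\CF,\ \mu(A^c)=0\}$. Dedekind completeness together with Lemma~\ref{lem:sets} furnishes $S\in\CF$ with $\ind_S=\inf\CC$, and the countable sup property provides $(A_n)\subset\CC$ with $\ind_S=\inf_n\ind_{A_n}=\ind_{\bigcap_n A_n}$, whence $\mu(S^c)\le\sum_n\mu(A_n^c)=0$ and Lemma~\ref{lem:simple}(2) identifies $S$ as a support, giving $\sca=\ca$. For $(3)\Rightarrow(2)$, every $\P\in\CP\subset\ca$ is supported by assumption, and I claim that $\sup_{\P\in\CP}\ind_{S(\P)}=\ind_\Omega$ in $\Linfty$: any upper bound $Y$ satisfies $Y\ge 1$ $\P$-a.s.\ for each $\P\in\CP$ and hence $Y\succeq\ind_\Omega$. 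The countable sup property then yields $(\P_n)_{n\in\N}\subset\CP$ with $\sup_n\ind_{S(\P_n)}=\ind_{\bigcup_n S(\P_n)}=\ind_\Omega$, so $(\bigcup_n S(\P_n))^c$ is $\CP$-polar; I claim $\P^*:=\sum_n 2^{-n}\P_n$ is equivalent to $\CP$. Indeed, if $\P^*(N)=0$ then $\P_n(N\cap S(\P_n))=0$, each $N\cap S(\P_n)$ is polar by Definition~\ref{def:support}, the countable union $N\cap\bigcup_n S(\P_n)$ is polar, and adding the already-polar $(\bigcup_n S(\P_n))^c$ makes $N$ polar.

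The main obstacle is $(4)\Rightarrow(2)$, because strict positivity of $\xi$ does \emph{not} by itself imply countable additivity of the set function $\mu(A):=\xi(\ind_A)$---Banach-limit-type functionals on $\ell^\infty$ exhibit strictly positive linear functionals that fail to be order continuous. My approach goes via the Yosida--Hewitt decomposition. Positivity of $\xi$ on the Banach lattice $\Linfty$ forces norm-continuity, placing $\mu$ in $({ba_{\mbf c}})_+$, and strict positivity translates into $\mu(N)=0\Leftrightarrow N$ being $\CP$-polar, so $\CP\ll\mu$. I would decompose $\mu=\mu_c+\mu_p$ with $\mu_c\in\ca_+$ and $\mu_p$ purely finitely additive, and let $C\in\CF$ witness the singularity: $\mu_c(C)=0$, $\mu_p(C^c)=0$. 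The decisive step is to show that any $\nu\in ca_+$ with $\nu\ll\mu$ automatically satisfies $\nu\ll\mu_c$; indeed $\nu|_C\ll\mu_p$, and picking $A_n\downarrow\emptyset$ with $\mu_p(A_n^c)=0$ (possible by pure finite additivity of $\mu_p$) forces $\nu|_C(A_n^c)=0$, so countable additivity of $\nu$ yields $\nu(C)=\lim_n\nu|_C(A_n)=0$; combined with $\mu_p(C^c)=0$, a short computation delivers $\nu\ll\mu_c$ on all of $\CF$. Applied to each $\P\in\CP$ this shows $\P\ll\mu_c$, so $\mu_c$ dominates $\CP$; since $\CP\neq\emptyset$, $\mu_c(\Omega)>0$, and $\P^*:=\mu_c/\mu_c(\Omega)$ is the sought equivalent dominating probability.
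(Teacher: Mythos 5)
Your architecture differs from the paper's: the paper proves the cycle $(1)\Rightarrow(2)\Rightarrow(3)\Rightarrow(4)\Rightarrow(1)$, outsourcing $(1)\Rightarrow(2)$ and the identity $\sca=\ca$ in $(2)\Rightarrow(3)$ to Luschgy--Mussmann's Theorem 5, and closing the cycle by citing a lemma of Nendel for $(4)\Rightarrow(1)$ ($\sigma$-Dedekind completeness plus a strictly positive functional yields super Dedekind completeness). Your hub-at-$(2)$ plan is legitimate; your $(1)\Rightarrow(3)$ and $(3)\Rightarrow(2)$ arguments are correct and give self-contained substitutes for those citations, and your $(3)\Rightarrow(2)$ is in substance the same $\P^*=\sum_n2^{-n}\P_n$ construction the paper uses for its step $(3)\Rightarrow(4)$. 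The genuinely new ingredient is your Yosida--Hewitt route for $(4)\Rightarrow(2)$, which, once repaired as below, is more elementary than the paper's reliance on the Nendel lemma.

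The repair is needed because the set $C$ you invoke, with $\mu_c(C)=0$ and $\mu_p(C^c)=0$, does not exist in general: lattice disjointness of the countably additive and purely finitely additive parts in $ba$ is \emph{not} witnessed by a single measurable set. Take $\Omega=\N$, $\CF=2^{\N}$, $\mu_c=\sum_n2^{-n}\delta_n$ and $\mu_p$ a Banach-limit charge vanishing on finite sets: $\mu_c(C)=0$ forces $C=\emptyset$, while $\mu_p(C^c)=0$ would force $\mu_p(\N)=0$. Fortunately your key claim ($\nu\in ca_+$ and $\nu\ll\mu$ imply $\nu\ll\mu_c$) is true and provable without $C$, essentially with the sets you already have in hand: fix $N$ with $\mu_c(N)=0$ and $\e>0$; since $\mu_p\wedge\nu=0$ in $ba$ ($\mu_p$ purely finitely additive, $\nu$ countably additive), choose $B_k\in\CF$ with $\mu_p(B_k)<2^{-k}$ and $\nu(B_k^c)<\e 2^{-k}$, and set $A_\e:=\bigcup_{k\ge1}B_k^c$. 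Then $\nu(A_\e)<\e$ while $\mu_p(A_\e^c)\le\inf_k\mu_p(B_k)=0$, so $\mu(N\cap A_\e^c)\le\mu_c(N)+\mu_p(A_\e^c)=0$, whence $\nu(N\cap A_\e^c)=0$ by $\nu\ll\mu$ and $\nu(N)\le\nu(A_\e)<\e$; letting $\e\downarrow0$ gives $\nu(N)=0$. With this substitution (and noting $\mu_c(\Omega)>0$, since otherwise $\P\ll\mu_c=0$ for $\P\in\CP$), your $(4)\Rightarrow(2)$ and hence the whole proposal goes through.
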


Note that Proposition~\ref{thm:countsup} is a variation of and draws from~\cite[Theorem 5]{Luschgy}. We also refer to the references quoted there. 

\begin{proof}
(1) implies (2): Suppose $\Linfty$ is super Dedekind complete. Let $\CA\subset\CF$ be a family of events such that $\ind_A\neq 0$ holds for all $A\in\CA$, and $\ind_A\wedge\ind_B=0$ holds in $\Linfty$ whenever $A,B\in\CA$ satisfy $A\neq B$. By super Dedekind completeness of $\Linfty$, $\sup_{A\in\CA}\ind_A$ exists in $\Linfty$. Moreover, we may select a countable subset $\mathcal B\subset\CA$ such that $\sup_{A\in\CA}\ind_A=\sup_{B\in\CB}\ind_B$. Assume for contradiction that we can find $A^*\in\CA\backslash \CB$. Then 
\[\ind_{A^*}=\ind_{A^*}\wedge\sup_{A\in\CA}\ind_A=\ind_{A^*}\wedge\sup_{B\in\CB}\ind_B=\sup_{B\in\CB}\ind_{A^*}\wedge\ind_B=0,\]
where we have used \cite[Lemma 1.5]{AliBurk} in the penultimate equality. 
This contradicts the choice of $\CA$. As such, the Boolean algebra $\{\ind_A\mid A\in\CF\}\subset\Linfty$ is of countable type; see \cite[p.\ 187]{Luschgy}. By \cite[Theorem 5]{Luschgy}, $\CP$ is dominated. 

(2) implies (3): Applying a classical exhaustion argument to the dominated set $\CP$, the dominating measure $\P^*$ can be chosen as a countable convex combination of elements in $\CP$  which satisfies $\P^*\approx\CP$. $\Linfty$ is consequently lattice isomorphic to $L^\infty_{\P^*}$, which is super Dedekind complete. This also gives the countable sup property in (3).
The identity $\sca=\ca$ follows from the implication (D.1) $\Rightarrow$ (D.8) in \cite[Theorem 5]{Luschgy}. 

(3) implies (4): For each $\P\in\CP$ consider $\ind_{S(\P)}\in\Linfty$. By Lemma~\ref{lem:supremum}, $\sup_{\P\in\CP}\ind_{S(\P)}=\ind_\Omega$. By the countable sup property, there is a sequence $(\P_n)_{n\in\N}\subset\CP$ such that $\ind_\Omega=\sup_{n\in\N}\ind_{S(\P_n)}$. Using \cite[Lemma 1.5]{AliBurk} in the second equality, we get for all $N\in\CF$ that
\begin{equation}\label{eq:null}\ind_N=\ind_{N}\wedge \ind_\Omega=\sup_{n\in\N}\ind_{N}\wedge\ind_{S(\P_n)}=\sup_{n\in\N}\ind_{N\cap S(\P_n)}.\end{equation}
If $0=\P_n(N)=\P_n(N\cap S(\P_n))$ holds for some $n\in\N$, then $\ind_{N\cap S(\P_n)}=0$ follows from the definition of a support. By \eqref{eq:null}, $\sup_{n\in\N}\P_n(N)=0$ therefore implies $\ind_N=0$.
Setting 
\[\sum_{n=1}^\infty2^{-n}\P_n=:\P^*,\]
the linear functional $\E_{\P^*}[\cdot]$ is strictly positive on $\Linfty$. This is (4).

(4) implies (1):  As $\Linfty$ is $\sigma$-Dedekind complete (each countable order bounded subset $\mathcal D\subset\Linfty$ has a supremum) and a strictly positive linear functional with values in $\R$ exists, \cite[Lemma A.3]{Nendel} shows super Dedekind completeness. 
\end{proof}

We will now apply Proposition~\ref{thm:countsup} to the \textit{Kreps-Yan property}, see \cite[Definition 4.7]{Platen} or \cite{Napp, Rokhlin2}.  For a detailed discussion of the Kreps-Yan Theorem we refer to \cite[Chapter 5]{DS}.

\begin{definition}
Let $\CX$ be a vector lattice with positive cone $\CX_+$ and let $\tau$ be a locally convex-solid Hausdorff topology on $\CX$. $(\CX,\tau)$ has the \emph{Kreps-Yan property} if for all convex cones $\CC$ with the properties $-\CX_+\subset\CC$ and $\overline\CC\cap\CX_+=\{0\}$ for the $\tau$-closure $\overline{\CC}$ of $\CC$, there is a strictly positive functional $\xi\in(\CX,\tau)^*$ such that $\xi(x)\le 0$ for all $x\in\CC$.
\end{definition}

We recall that the Kreps-Yan property is closely related to the existence of state-price deflators, see for instance \cite{DS, Napp}. In fact, consider a financial market model given by a space of investment opportunities $\CX$ and the convex cone $ \CC\subset \CX$ of zero-cost investments. It is natural to assume that $-\CX_+\subset \CC$. A typical version of the FTAP would state that the absence of free lunches (no arbitrage), that is $\overline\CC \cap \CX_+=\{0\}$, is equivalent to the existence of a state price deflator, that is a strictly positive functional $\xi\in(\CX,\tau)^*$ such that $\xi(x)\le 0$ for all $x\in\CC$. Corollaries~\ref{cor:KrepsYan} and \ref{cor:KrepsYan2} below show that such theorems necessitate dominated uncertainty simply because the mentioned type of state price deflators does not exist when $\CP$ is not dominated.

\begin{corollary}\label{cor:KrepsYan}
Let $\Linfty\subset\CX\subset\Lzero$ be an ideal. Let $\tau$ be a locally convex-solid Hausdorff topology on $\CX$. If $(\CX,\tau)$ has the Kreps-Yan property, then $\CP$ is dominated. 
\end{corollary}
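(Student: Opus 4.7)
The plan is to apply the Kreps-Yan property to the simplest available convex cone, namely $\CC:=-\CX_+$, and then lift the resulting strictly positive continuous linear functional to the criterion in Proposition~\ref{thm:countsup}.

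First I would verify that $\CC=-\CX_+$ meets the hypotheses of the Kreps-Yan property. It is a convex cone, and $-\CX_+\subset\CC$ is automatic. Because $\tau$ is a locally convex-solid (in particular, locally solid) Hausdorff topology on the vector lattice $\CX$, the lattice operations are $\tau$-continuous. Hence the map $\CX\ni x\mapsto x^-=(-x)\vee 0$ is $\tau$-continuous, and $\CX_+=\{x\in\CX\mid x^-=0\}$ is the preimage of the closed set $\{0\}$ under a continuous map, so $\CX_+$ and thus $-\CX_+$ is $\tau$-closed. Therefore $\overline{\CC}\cap\CX_+=-\CX_+\cap\CX_+=\{0\}$, the last equality being a purely order-theoretic fact in a vector lattice.

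Next I would invoke the Kreps-Yan property to obtain a strictly positive $\xi\in(\CX,\tau)^*$ with $\xi(x)\le 0$ for all $x\in-\CX_+$; equivalently $\xi\succeq 0$ and $\xi(Y)>0$ for every $0\neq Y\in\CX_+$. Since $\Linfty\subset\CX$ is an ideal, the restriction $\xi|_{\Linfty}$ is a well-defined real-valued linear functional on $\Linfty$, and it is strictly positive because $\Linftyplus\subset\CX_+$. This places us in situation (4) of Proposition~\ref{thm:countsup}. The equivalence (4)$\Leftrightarrow$(2) in that proposition then yields that $\CP$ is dominated, which is the desired conclusion.

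I do not anticipate a genuine obstacle: the argument is essentially a one-line reduction. The only point that needs care is the $\tau$-closedness of $\CX_+$, which one could also derive from \cite[Theorem 2.22]{AliBurk} (the dual of a locally convex-solid space is an ideal in $\CX^\sim$, so the positive cone agrees with the intersection of half-spaces $\{\xi\ge 0\}$ for $\xi$ in the positive part of the dual, hence is weakly, and a fortiori $\tau$-, closed). Everything else is a direct application of Proposition~\ref{thm:countsup}.
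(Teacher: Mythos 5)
Your proposal is correct and follows essentially the same route as the paper: apply the Kreps--Yan property to the cone $-\CX_+$ (whose $\tau$-closedness the paper obtains from \cite[Theorem 2.21(b)]{AliBurk2}, while you derive it from the $\tau$-continuity of the lattice operations), restrict the resulting strictly positive functional to $\Linfty$, and conclude via Proposition~\ref{thm:countsup}. No gaps.
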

\begin{proof}
The positive cone $\CX_+$ of $\CX$ is closed with respect to the locally convex-solid topology $\tau$; cf.\ \cite[Theorem 2.21(b)]{AliBurk2}.
Applying the Kreps-Yan property to the convex cone $\CC:=-\CX_+$, we infer that $(\CX,\peq,\tau)$ admits a continuous strictly positive functional $\xi$. $\xi|_{\Linfty}$ is strictly positive as well, whence dominatedness of $\CP$ follows with Proposition~\ref{thm:countsup}. 
\end{proof}

\begin{corollary}\label{cor:KrepsYan2}
Let $(\CX,\peq,\Norm_\CX)$ be a Banach lattice such that  $\Linfty\subset\CX\subset {L^0_{\mbf c}}$ is an ideal.
Then the following are equivalent:
\begin{itemize}
\item[\tn{(1)}]$\CP$ is dominated. 
\item[\tn{(2)}]$\CX$ has the Kreps-Yan property.
\end{itemize}
\end{corollary}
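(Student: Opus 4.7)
I would prove the two directions separately. First, (2)$\Rightarrow$(1) follows by applying Corollary~\ref{cor:KrepsYan} to the norm topology $\tau$ on $\CX$. Since $(\CX,\peq,\|\cdot\|_\CX)$ is a Banach lattice, $\|\cdot\|_\CX$ is a lattice norm, so $\tau$ is locally convex-solid (the open balls form a base of solid convex neighbourhoods of $0$, cf.\ \cite[Theorem 2.21]{AliBurk2}) and Hausdorff as a norm topology. Combined with the ideal hypothesis $\Linfty\subset\CX\subset\Lzero$, this puts us directly into the setting of Corollary~\ref{cor:KrepsYan}, which forces $\CP$ to be dominated.

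For the converse (1)$\Rightarrow$(2), I would carry out the classical Kreps-Yan argument, which becomes tractable once $\CP$ is dominated. Choose $\P^*\approx\CP$ so that the $\CP$-q.s.\ order coincides with the $\P^*$-a.s.\ order, $\Linfty=L^\infty_{\P^*}$, and $\CX$ sits as an ideal in $L^0_{\P^*}$. By Proposition~\ref{thm:countsup}, $\Linfty$ is super Dedekind complete, and via \cite[Lemma 8]{surplus} this property lifts to $\CX$, yielding the countable sup property. Now let $\CC\subset\CX$ be a convex cone with $-\CX_+\subset\CC$ and $\overline{\CC}\cap\CX_+=\{0\}$. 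For each $0\neq X\in\CX_+$, the Hahn-Banach Theorem separates $X$ from the $\|\cdot\|_\CX$-closed convex set $\overline{\CC}$, producing a positive $\xi_X\in\CX^*$ with $\xi_X\le 0$ on $\CC$ and $\xi_X(X)>0$; the inclusion $-\CX_+\subset\CC$ forces positivity, and normalisation places all such separators in the convex set $\CG:=\{\xi\in\CX^*_+\mid \|\xi\|_{\CX^*}\le 1,\,\xi\le 0\text{ on }\CC\}$.

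The final step is to exhaust $\CG$ into a single strictly positive $\xi^*\in\CG$. To each $\xi\in\CG$ I would associate a support event $B(\xi)\in\CF$ of the countably additive part of the restriction $\xi|_{\Linfty}\in ba(\P^*)$, obtained from the Yosida-Hewitt decomposition, set $s:=\sup\{\P^*(B(\xi))\mid\xi\in\CG\}$, and use the countable sup property to select $(\xi_n)_{n\in\N}\subset\CG$ with $\P^*\bigl(\bigcup_n B(\xi_n)\bigr)=s$. The convex combination $\xi^*:=\sum_{n\in\N}2^{-n}\xi_n\in\CG$ then satisfies $\P^*(B(\xi^*))=s$; if $B(\xi^*)^c$ had positive $\P^*$-measure, the Hahn-Banach step applied to $\ind_{B(\xi^*)^c}\in\Linfty\subset\CX$ would yield $\eta\in\CG$ with $\P^*(B(\eta)\setminus B(\xi^*))>0$, contradicting the maximality of $s$. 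Hence $B(\xi^*)=\Omega$ up to $\P^*$-nullity, and the ideal structure $\CX\supset\Linfty$ transfers strict positivity to $\CX_+$: for $0\neq Y\in\CX_+$ choose $\eps>0$ with $\P^*(Y\ge\eps)>0$, so that $\xi^*(Y)\ge\eps\xi^*(\ind_{\{Y\ge\eps\}})>0$. The hard part will be the bookkeeping around $B(\xi)$---taming the purely finitely additive part of $\xi|_{\Linfty}$ and transferring strict positivity from $\Linfty$ up to $\CX$---and this is precisely where the dominatedness hypothesis is indispensable.
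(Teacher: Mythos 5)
Your direction (2)$\Rightarrow$(1) is exactly the paper's argument: the norm topology of the Banach lattice $\CX$ is locally convex-solid and Hausdorff, so Corollary~\ref{cor:KrepsYan} applies verbatim. For (1)$\Rightarrow$(2), however, the paper does something quite different from you: it chooses $\w\P\approx\CP$, passes to the $\w\P$-completion of $\CF$ (checking that this changes nothing up to lattice isomorphism) so that $\CX$ becomes a Banach ideal space in the sense of \cite{Rokhlin2}, and then simply invokes \cite[Theorem 1]{Rokhlin2}. You instead attempt to re-derive that theorem by the classical separation-plus-exhaustion scheme, and this is where your argument has a genuine gap.

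The gap is in the exhaustion step. Your separating functionals live in $\CX^*$ for the \emph{norm} topology, so their restrictions to $\Linfty=L^\infty_{\P^*}$ are in general only finitely additive; you rightly run the exhaustion on the supports $B(\xi)$ of their countably additive parts. But when $\P^*(B(\xi^*)^c)>0$ and you separate $\ind_{B(\xi^*)^c}$ from $\overline{\CC}$, the resulting $\eta\in\CG$ only satisfies $\eta(\ind_{B(\xi^*)^c})>0$, and this positivity may be carried entirely by the purely finitely additive part of $\eta|_{\Linfty}$. In that case $B(\eta)\subset B(\xi^*)$ up to $\P^*$-null sets and no contradiction with the maximality of $s$ arises. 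Nor can you repair this by replacing $\eta$ with its countably additive part $\eta_c$: for $x\in\CC$ one only gets $\eta_c(x)=\eta(x)-\eta_p(x)\le-\eta_p(x)$, and since $\eta_p(x)$ may well be negative for non-positive $x\in\CC$, the functional $\eta_c$ need not lie in $\CG$ at all. Taming exactly these singular functionals is the substance of Rokhlin's theorem and is the reason the Kreps-Yan property for norm topologies (as opposed to $\sigma(L^\infty,L^1)$-type topologies, where separating functionals are automatically order continuous) is delicate; it cannot be dismissed as bookkeeping. Either supply an actual argument for this step, or follow the paper: reduce to a complete measure space and cite \cite[Theorem 1]{Rokhlin2}.
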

\begin{proof}
(1) implies (2): As in the proof of Proposition~\ref{thm:countsup} we can choose a probability measure $\w\P\approx\CP$ which satisfies $\Linfty=L^\infty_{\w\P}\subset\CX\subset L^0_{\w\P}={L^0_{\mbf c}}$ and such that the $\CP$-q.s.\ order agrees with the $\w\P$-a.s.\ order. 
Consider the $\w\P$-completion $(\Omega,\CF(\w\P))$ of $(\Omega,\CF)$ as defined in \eqref{eq:defenlarge}. The $\sigma$-algebra $\CF(\w\P)$ collects all $B\subset\Omega$ with the property that, for appropriate $A,N\in\CF$, $A\triangle B\subset N$ and $\w\P(N)=0$. In particular, $\w\P$ extends uniquely to a probability measure $\w\P^\sharp$ on $\CF(\w\P)$. 
One can show that $L^0_{\w\P}$ over $(\Omega,\CF)$ and $L^0_{\w\P^\sharp}$ over $(\Omega,\CF(\w\P))$ are lattice isomorphic. Hence, we can assume without loss of generality that the underlying $\sigma$-algebra $\CF$ is $\w\P$-complete (i.e.\ $\CF=\CF(\w\P)$), which qualifies $(\CX,\peq,\Norm_\CX)$ to be a Banach ideal space in the sense of \cite{Rokhlin2}. \cite[Theorem 1]{Rokhlin2} shows that (2) holds. 

(2) implies (1): This is Corollary~\ref{cor:KrepsYan}. 
\end{proof}

\begin{remark}
In \cite[Section 2]{Napp}, it is shown that the Banach lattice 
\[(\ell^1(\R_+),\le,\Norm_{\ell^1(\R_+)}),\]
where $\ell^1(\R_+)$ comprises all functions $f\colon\R_+\to\R$ satisfying 
\begin{center}$\|f\|_{\ell^1(\R_+)}:=\sup_{F\subset\R_+\tn{ finite}}\sum_{\om\in F}|f(\om)|<\infty$,\end{center}
and $\le$ denotes the pointwise order, 
does not have the Kreps-Yan property. The linear functional 
\[\xi\colon\begin{array}{l}\ell^1(\R_+)\to\R,\\
f\mapsto\sum_{\om\in\R_+}f(\om),\end{array}\]
is strictly positive though. In light of  Corollary~\ref{cor:KrepsYan2}, the decisive feature of this example is that there is no $\sigma$-algebra $\CF$ and a set $\CP\subset\Delta(\CF)$ such that the associated upper probability $\mbf c$ satisfies $\Linfty\subset\CX\subset {L^0_{\mbf c}}$. 
\end{remark}

\subsection{The Fatou property of risk measures}\label{sec:Grothendieck}

In this subsection, we consider {\em convex risk measures} $\rho$ defined on the space $\Linfty$, i.e., functions $\rho\colon\Linfty\to\R$ with the following properties:
\begin{enumerate}[(a)]
\item convexity.
\item {\em monotonicity:} $X\peq Y$ implies $\rho(X)\le\rho(Y)$.
\item {\em cash-additivity:} For all $X\in \Linfty$ and all $m\in\R$, $\rho(X+m\ind_\Omega)=\rho(X)+m$.
\end{enumerate}
In order to justify the choice of monotonicity, we interpret the elements of $\Linfty$ as losses net of gains. 
It is well known that the defining properties of a convex risk measure $\rho\colon\Linfty\to\R$ imply that its acceptance set,
\[\CA_\rho:=\{X\in \Linfty\mid \rho(X)\le 0\},\]
is nonempty, $\Norm_\Linfty$-closed, convex, and monotone in that $X\peq Y$ and $Y\in\CA_\rho$ together imply $X\in\CA_\rho$ (slightly adapt \cite[Proposition~4.6]{FS}). Also $\rho$ may be recovered from $\CA_\rho$ via $$\rho(X)=\inf\{m\in \R\mid X-m\in \CA_\rho\},\quad X\in \Linfty.$$

One of the most widely studied topics in the theory of risk measures are so-called robust representations. Of particular interest is a representation of the convex risk measure $\rho$ as a worst-case expected net loss $\E_\Q[X]$ with respect to various conceivable probability measures $\Q$ on $(\Omega,\CF)$ whose plausibility is expressed by an adjustment $\alpha(\Q)$. That is, 
\begin{itemize}
\item[\namedlabel{G2}{\tn{\textbf{(F1)}}}]There is a function $\alpha\colon\ca\cap\Delta(\CF)\to\R\cup\{\infty\}$ bounded from below such that $\alpha\not\equiv\infty$ and 
\[\rho(X)=\sup_{\Q\in\Delta(\CF):~\Q\ll\CP}\E_\Q[X]-\alpha(\Q),\quad X\in\Linfty.\footnote{~Probability measures $\Q$ appearing in such a representation have to satisfy $\Q\ll\CP$ as $\mbf c(X=0)=1$ implies $\rho(X)=0$.}\]
\end{itemize}
The relevant question is what necessary and sufficient conditions guarantee the existence of representations as in \ref{G2}. 
If $\CP$ is dominated, and hence there is a probability measure $\P \approx\CP$, 
the answer to this question is well known. Indeed, an important result from risk measure theory (e.g., \cite[Theorem 4.33]{FS}) states that a convex risk measure $\rho$ on $\Linfty=L^\infty_\P$ satisfies \ref{G2} if and only if it has the 
{\em sequential Fatou property}:
\begin{quote}
For all sequences $(X_n)_{n\in\N}\subset L^\infty_\P$ which are bounded in norm and converge $\P$-a.s.\ to some $X\in L^\infty_\P$, $\rho(X)\le\liminf_{n\to\infty}\rho(X_n)$.
\end{quote}
This equivalence goes back at least to \cite{Delbaen} and has enjoyed various extensions to risk measures on larger model spaces than  $L^\infty_\P$ (under dominated uncertainty). \cite{GaoFatou}, for instance, proves the equivalence of the sequential Fatou property and $\ref{G2}$ when the underlying model space is an Orlicz space. \cite{Chen} shares our reverse approach and aims to characterise which conditions on the model space $\CX$ have to be satisfied so that a continuous {\em law-invariant} risk measure automatically has the Fatou property. 
Super Dedekind completeness of these model spaces for dominated $\CP$ shows that the sequential Fatou property is in fact a special case of the following general Fatou property:
\begin{itemize}
\item[\namedlabel{G1}{\tn{\textbf{(F2)}}}]$\rho$ has the Fatou property:
For all nets $(X_\alpha)_{\alpha\in I}$ and $X$ in $\Linfty$, 
\[X_\alpha\overset o\longrightarrow X\quad\implies\quad\rho(X)\le \liminf_{\alpha \in I}\rho(X_\alpha).\footnote{~\cite[Theorem 4.3]{Fatou} studies the {\em  sequential} Fatou property of quasiconvex risk measures under not necessarily dominated uncertainty and illustrates its insufficiencies. This explains why in \ref{G1} sequences are generalised by nets. 
We refer the interested reader to the discussion in \cite{Fatou}.}\] 
\end{itemize}
Observe that \cite{Namioka} shares with this subsection the usage of general lattice methodology. 
In particular, \cite[Proposition~24]{Namioka} shows that \ref{G1} is sufficient for obtaining a robust dual representation for convex monotone functionals on a Fr\'echet lattice $L$ if, in addition, one assumes the so-called ``C-property" for the weak topology $\sigma(L,L^\sim_n)$. The latter is debated and has been shown to fail already in dominated ambient spaces (see \cite{Cproperty}).

\medskip

This subsection is devoted to the question how \ref{G2} and \ref{G1} are related if $\CP$ is not dominated. 
A first issue arises from the observation that $\sca\neq \ca$ in general. Indeed, recall that the order-continuous dual $(\Linfty)^\sim_n$ is given by $\sca$ (Proposition~\ref{prop:(A1)-(A3)}(3)). Hence, in view of the order convergence in~\ref{G1}, it is natural to exclude non-supported probability measures in representation~\ref{G2}:
\begin{itemize}
\item[\namedlabel{G3}{\tn{\textbf{(F3)}}}]There is a function $\beta\colon\sca\cap\Delta(\CF)\to\R\cup\{\infty\}$ bounded from below such that $\beta\not\equiv\infty$ and
\[\rho(X)=\sup_{\Q\in\sca\cap\Delta(\CF)}\E_\Q[X]-\beta(\Q),\quad X\in\Linfty.\]
\end{itemize}
As shown below, $\ref{G3}\implies\ref{G1}$, whereas the general implication $\ref{G2}\implies\ref{G1}$ fails if $\ca\neq \sca$.
Also note that the---to our knowledge---only proof of $\ref{G2}\iff\ref{G1}$ in dominated frameworks combines an argument attributed to Grothendieck with the Krein-\v{S}mulian Theorem; cf.\ \cite[p.\ 1329]{Fatou}. 
Attempting to adapt this proof idea to non-dominated frameworks, we notice that  the Krein-\v{S}mulian Theorem requires that $\Linfty$ carries a weak* topology, so we are back to the discussion in Section~\ref{sec:weak:star}. In particular, this also motivates the following relaxation of \ref{G2} and \ref{G3}: Suppose $(\Linfty,\Norm_\Linfty)$ is the dual space of a normed space $(\CY,\Norm_\CY)$---which has to be isometrically isomorphic to a subspace of $ba_{\mbf c}$ equipped with the total variation norm $TV$.
\begin{itemize}
\item[\namedlabel{G4}{\tn{\textbf{(F4)}}}]There is a function $\gamma \colon\CY\to\R\cup\{\infty\}$ bounded from below such that $\gamma\not\equiv\infty$ and 
\[\rho(X)=\sup_{\mu\in\CY}\int X\,d\mu-\gamma(\mu),\quad X\in\Linfty.\]
\end{itemize}
Before we turn to the main results of this section, a remark elaborates some technical aspects.
\begin{remark}\label{rem:Fatou}
\begin{enumerate}[(1)]
\item The topology $\sigma(\Linfty,\ca)$ is always locally convex and Hausdorff. By the Fenchel-Moreau Theorem, \ref{G2} is equivalent to $\sigma(\Linfty,\ca)$-lower semicontinuity of a convex monetary risk measure. Due to cash-additivity, the latter holds if and only if the acceptance set $\CA_\rho$ is  $\sigma(\Linfty,\ca)$-closed.
\item The topology $\sigma(\Linfty,\sca)$ is  locally convex, but Hausdorff iff $\CP$ is of class (S). Without the latter, we may therefore not be able to invoke the Fenchel-Moreau Theorem. While a risk measure satisfying \ref{G3} is $\sigma(\Linfty,\sca)$-lower semicontinuous, the converse implication may not hold. 
\item \ref{G3} implies \ref{G2} and \ref{G1}. The first assertion is due to $\sca\subset\ca$. For the second, take a net $(X_\alpha)_{\alpha\in I}\subset\Linfty$ order converging to $X\in\Linfty$. Since $\sca=(\Linfty)^\sim_n$ (Proposition~\ref{prop:(A1)-(A3)}(3)), we have \[\E_\Q[X]-\beta(\Q)=\liminf_{\alpha\in I}\E_\Q[X_\alpha]-\beta(\Q)\le\liminf_{\alpha\in I}\rho(X_\alpha)\]
for all $\Q\in\sca\cap\Delta(\CF)$. Hence,
\[\rho(X)=\sup_{\Q\in\sca\cap\Delta(\CF)}\E_\Q[X]-\beta(\Q)\le\liminf_{\alpha\in I}\rho(X_\alpha).\]
\end{enumerate}
\end{remark}

The next theorem characterises when equivalences between \ref{G1}--\ref{G4} hold for general $\CP$ under the condition that the Krein-\v{S}mulian Theorem remains applicable. If a generalisation of the classical result on the relation of Fatou property and robust representations to not necessarily dominated frameworks holds ($\ref{G4}\iff\ref{G1}$), then in fact the equivalence $\ref{G1}\iff\ref{G3}$ follows. The latter therefore is the canonical generalisation and requires $\Linfty$ to have a particular structure in that $\sca^*=\Linfty$. 

\begin{theorem}\label{thm:Grothendieck}
Suppose there is a normed space $(\CY,TV)\subset(ba_{\mbf c},TV)$ whose dual space is given by $(\Linfty,\Norm_\Linfty)$.
\begin{itemize}
\item[\tn{(1)}]If, for all convex monetary risk measures $\rho$ on $\Linfty$, \ref{G1} and \ref{G4} are equivalent properties, then $\sca^*=\Linfty$. 
\item[\tn{(2)}]If $\sca^*=\Linfty$, \ref{G1} and \ref{G3} are equivalent properties  
of convex monetary risk measures $\rho$ on $\Linfty$. 
\end{itemize}
\end{theorem}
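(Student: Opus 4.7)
For part (2), the direction \ref{G3}~$\Rightarrow$~\ref{G1} is Remark~\ref{rem:Fatou}(3). For the reverse, assume $\rho$ satisfies \ref{G1}, so that its acceptance set $\CA_\rho$ is norm-closed, convex, monotone, and order-closed. Under the hypothesis $\sca^*=\Linfty$, the space $\Linfty$ is the dual of the Banach space $(\sca,TV)$, so Krein--\v{S}mulian reduces the $\sigma(\Linfty,\sca)$-closedness of $\CA_\rho$ to that of each intersection $\CA_\rho\cap rB_\Linfty$, $r>0$. The key structural input is that the norm on $(\sca,TV)$ is order-continuous --- by Lemma~\ref{lem:Luschgy}, $\sca$ is isomorphic to an $L^1$-space of a localisable measure --- so the classical Grothendieck--Mazur argument (as in \cite{Delbaen,Fatou}) transplants: given a norm-bounded net $(X_\alpha)\subset\CA_\rho$ converging weak${}^*$ to $X$, convexity permits the extraction of a net of convex combinations order-converging to $X$, whereupon order closedness of $\CA_\rho$ delivers $X\in\CA_\rho$. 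Weak${}^*$-closedness of $\CA_\rho$, combined with cash-additivity and monotonicity, yields via Fenchel--Moreau the representation $\rho(X)=\sup_{\Q\in\sca\cap\Delta(\CF)}\E_\Q[X]-\beta(\Q)$ with $\beta$ bounded below and not identically infinite, which is \ref{G3}.

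For part (1), I would derive both Dedekind completeness of $\Linfty$ and the class~(S) property of $\CP$ and then invoke Theorem~\ref{thm:perfect}. For Dedekind completeness, test the hypothesis on the worst-case risk measure $\rho_*(X):=\inf\{m\in\R\mid X\peq m\ind_\Omega\}$: its acceptance set $-\Linfty_+$ is order-closed, so $\rho_*$ enjoys \ref{G1}; by hypothesis \ref{G4} also holds, making $\Linfty_+$ $\sigma(\Linfty,\CY)$-closed, whence Proposition~\ref{prop:onlyDC} delivers Dedekind completeness of $\Linfty$. For class~(S), test the hypothesis on linear risk measures. For $\Q\in\sca\cap\Delta(\CF)$, the expectation $\rho_\Q=\E_\Q[\cdot]$ has \ref{G3}, hence \ref{G1} by Remark~\ref{rem:Fatou}(3), hence \ref{G4} by hypothesis; since a linear $\sigma(\Linfty,\CY)$-lower-semicontinuous functional is automatically $\sigma(\Linfty,\CY)$-continuous and hence identified with an element of $\CY$, $\Q\in\CY$, and the vector-lattice structure of $\sca$ then upgrades this to $\sca\subset\CY$. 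Symmetrically, for $\mu\in\CY\cap\Delta(\CF)$ the expectation $\rho_\mu$ trivially has \ref{G4} and hence \ref{G1}; since the Fatou property for a linear functional coincides with order continuity, $\mu\in(\Linfty)^\sim_n=\sca$ by Proposition~\ref{prop:(A1)-(A3)}(3). In particular $\CY_+=\sca_+$.

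With $\CY_+=\sca_+$ established and $\Linfty_+$ already weak${}^*$-closed in $\langle\Linfty,\CY\rangle$, the bipolar theorem yields
\[\Linfty_+=\Big\{X\in\Linfty\,\Big|\,\int X\,d\mu\ge 0\text{ for all }\mu\in\sca_+\Big\}.\]
Any $X\in\sca^\perp$ satisfies this criterion with both $X$ and $-X$, forcing $\pm X\in\Linfty_+$ and thus $X=0$; hence $\sca$ separates the points of $\Linfty$, which by Proposition~\ref{prop}(1) is the class~(S) property of $\CP$. Theorem~\ref{thm:perfect} then concludes $\sca^*=\Linfty$. The principal obstacle is the Grothendieck--Mazur passage in part~(2) from weak${}^*$-convergence of norm-bounded nets to order convergence of convex combinations; the order-continuous norm on $(\sca,TV)$ supplied by Lemma~\ref{lem:Luschgy} is the key ingredient that makes this classical argument survive transplantation to our nondominated setting.
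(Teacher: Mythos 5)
Your part (1) follows the paper's route almost exactly: the worst-case risk measure $\rho_*(X)=\inf\{m\mid X\peq m\ind_\Omega\}$ yields weak* closedness of $\Linftyplus$ and hence Dedekind completeness via Proposition~\ref{prop:onlyDC}, and then one tests linear functionals to locate the positive part of $\CY$ inside $\sca$. One imprecision: you only test $\mu\in\CY\cap\Delta(\CF)$, but the elements of $\CY\subset ba_{\mbf c}$ are a priori merely finitely additive, so the cone $\mathcal Z=\{\mu\in\CY\mid \int X\,d\mu\ge0\ \forall X\in\Linftyplus\}$ that actually appears in the bipolar description of $\Linftyplus$ need not consist of countably additive measures. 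You must run the test on \emph{every} nonzero $\mu\in\mathcal Z$ (normalised, $X\mapsto\mu(\Omega)^{-1}\int X\,d\mu$ is still a convex monetary risk measure with \ref{G4}); order continuity, and with it countable additivity and supportedness, then come out as a \emph{conclusion} via $(\Linfty)^\sim_n=\sca$. This is what the paper's auxiliary risk measure $\widehat\rho=\max\{\rho,\int\cdot\,d\mu-\iota\}$ accomplishes. Your claimed identity $\CY_+=\sca_+$ is stronger than needed (only $\mathcal Z\subset\sca$ enters the separation argument) and the inclusion $\sca\cap\Delta(\CF)\subset\CY$ is superfluous. These are repairable.

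Part (2) contains a genuine gap. You reduce, via Krein--\v{S}mulian, to showing that each $\CA_\rho\cap rB_{\Linfty}$ is $\sigma(\Linfty,\sca)$-closed, and then assert that the ``classical Grothendieck--Mazur argument transplants'': from a norm-bounded net in $\CA_\rho$ converging weak* to $X$, convexity allegedly produces convex combinations order-converging to $X$. This is precisely the step that fails in the nondominated setting, and order continuity of the norm on $(\sca,TV)\cong L^1_\mu$ does not rescue it. Mazur's lemma applies to the weak topology $\sigma(E,E^*)$, not to the weak* topology $\sigma(E^*,E)$; in the dominated case one circumvents this by embedding the bounded set into $L^2(\P)$, where weak and weak* convergence agree on bounded sets and $L^1$-norm convergence yields an a.s.-convergent subsequence. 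Here $\mu$ is localisable but not $\sigma$-finite, so $\ind_\Omega\notin L^1_\mu$ and bounded elements of $\Linfty$ do not embed into $L^1_\mu$ or $L^2_\mu$ at all; moreover norm convergence in $L^1_\mu$ would not produce order convergence in $\Linfty$. The paper's proof takes a genuinely different path: it first translates and takes moduli, replacing the (non-solid) set $\CC_r=\CA_\rho\cap rB_\Linfty$ by $\CB=\CC_r+\{r\ind_\Omega\}$ and the \emph{solid} convex order-closed set $\CB'=\{X\mid |X|\in\CB\}$, and then invokes Corollary~\ref{prop:Grothendieck}(1) --- the robust Brannath--Schachermayer machinery for the locally convex-solid order-continuous topology $|\sigma|(\Linfty,\sca)$ --- to conclude that $\CB'$, hence $\CB=\CB'\cap\Linftyplus$, hence $\CC_r$, is $\sigma(\Linfty,\sca)$-closed; only then does Krein--\v{S}mulian and Fenchel--Moreau finish the proof. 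Your sketch is missing both the solidity reduction (without which Corollary~\ref{prop:Grothendieck} does not apply) and a valid substitute for the Mazur step.
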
 
\begin{proof}
For statement (1), consider the convex monetary risk measure 
\[\rho(X):=\inf\{m\in\R\mid X-m\preceq 0\}=\inf\{k\in\R\mid \mbf c(X>k)=0\}.\]
$\rho$ can be shown to have the Fatou property \ref{G1} and satisfies $\CA_\rho=-\Linftyplus$. By assumption, $\rho$ also satisfies \ref{G4}. In particular, it is a $\sigma(\Linfty,\CY)$-lower semicontinuous function, whence $\sigma(\Linfty,\CY)$-closedness of the acceptance set $\CA_\rho=-\Linftyplus$ follows. 
In particular, $\Linftyplus$ is also $\sigma(\Linfty,\CY)$-closed. 
By Proposition~\ref{prop:onlyDC}, $\Linfty$ is Dedekind complete.\\
Next we will show that the nontrivial set $\mathcal Z:=\{\mu\in\CY\mid\forall\,X\in\Linftyplus :~\smallint X\,d\mu\ge 0\}$
satisfies $\mathcal Z\subset\sca$. To this end, fix $\mu\in\mathcal Z$ as well as a net $(X_\alpha)_{\alpha\in I}\subset-\Linftyplus$ such that $X_\alpha\uparrow 0$ in order. 
In particular, $\iota:=\sup_{\alpha\in I}\int X_\alpha\,d\mu\le 0$. 
Set 
$$\widehat\rho(X):=\max\{\rho(X),\int X\,d\mu-\iota\},\quad X\in\Linfty,$$
a convex risk measure with property \ref{G4}. It therefore also has the Fatou property \ref{G1}. In particular, 
\[|\iota|=\widehat\rho(0)\le\liminf_{\alpha\in I}\widehat\rho(X_\alpha)\le 0.\]
Hence, $\iota=0$, and $\mu\in(\Linfty)^\sim_n=\sca$ by Proposition~\ref{prop:(A1)-(A3)}(3).\\
The class (S) property of $\CP$ now follows from $\mathcal Z\approx\CP$. To verify the latter, let $A\in\CF$ be such that $\mu(A)=0$ holds for all $\mu\in\mathcal Z$. By \eqref{eq:cone},
$\rho(s\ind_A)\le 0$ holds for all $s>0$. Equivalently, $-s\ind_A\in\Linfty_+$ holds for all $s>0$. This is only possible if $\mbf c(A)=0$, which entails $\mathcal Z\approx\CP$. At last,  Theorem~\ref{thm:perfect} yields $\sca^*=\Linfty$. 

\smallskip

Let us turn to statement (2). We have already observed in Remark~\ref{rem:Fatou} that a convex monetary risk measure $\rho$ with property \ref{G3} also has property \ref{G1}. Conversely, suppose a convex risk measure $\rho$ has the Fatou property \ref{G1}. 
We prove first that for each choice of $r>0$ the set \[\CC_r:=\{X\in\CA_\rho\mid\|X\|_\Linfty\le r\}\] is $\sigma(\Linfty,\sca)$-closed. Without loss of generality, we may assume $\CC_r\neq\emptyset$. As each $X\in\CC_r$ satisfies $X\succeq -r\ind_\Omega$ and $\CA_\rho$ is monotone, $-r\ind_\Omega=\inf\CC_r\in\CC_r$. 
Now define 
\[\mathcal B:=\{X+r\ind_\Omega\mid X\in\CC_r\}\quad\text{and}\quad\CB':=\{X\in\Linfty\mid |X|\in\CB\}.\]
Both sets are order closed and convex. Moreover, $\CB'$ is solid. Indeed, if $Z\in\CB'$ satisfies $|Z|=r\ind_\Omega+X$ for some $X\in\CC_r$, and $Y\in\Linfty$ satisfies $|Y|\peq|Z|$, we have
$$X=|Z|-r\ind_\Omega\succeq |Y|-r\ind_\Omega\succeq-r\ind_\Omega.$$
Using monotonicity of $\CA_\rho$ and the fact that $X,-r\ind_\Omega\in\CC_r$, we infer $|Y|-r\ind_\Omega\in\CC_r$ and eventually $Y\in\CB'$. As $\sca^*=\Linfty$ entails the class (S) property of $\CP$, Proposition~\ref{prop:Grothendieck}(1) implies that 
$\CB'$ is $\sigma(\Linfty,\sca)$-closed. 
As $\CB=\CB'\cap\Linftyplus$, $\CB$ is also $\sigma(\Linfty,\sca)$-closed.
Hence, the convex set $\CC_r=\CB-\{r\ind_\Omega\}$ is $\sigma(\Linfty,\sca)$-closed as well. We may now invoke the Krein-\v{S}mulian Theorem to see that $\CA_\rho$ is $\sigma(\Linfty,\sca)$-closed. Noting that, for all $k\in\R$, $\{X\in\Linfty\mid \rho(X)\le k\}=\CA_\rho+\{k\ind_\Omega\}$, this implies that $\rho$ is $\sigma(\Linfty,\sca)$-lower semicontinuous. As elaborated in Remark~\ref{rem:Fatou}, the Fenchel-Moreau Theorem yields \ref{G3}. 
\end{proof}

The following corollary returns to the starting point of our discussion, axiom~\ref{G2}. In fact, equivalence between \ref{G2} and \ref{G1} can only be expected if the fundamental condition $\ca^*=\Linfty$ from \cite{Fatou} holds.

\begin{corollary}\label{cor:Grothendieck}
Suppose $\Linfty$ is Dedekind complete. Then the following are equivalent:
\begin{itemize}
\item[\tn{(1)}]For all convex monetary risk measures $\rho$ on $\Linfty$, \ref{G2} and \ref{G1} are equivalent.
\item[\tn{(2)}]$\ca^*=\Linfty$.
\end{itemize}  
\end{corollary}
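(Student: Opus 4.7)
My plan is to prove the two directions separately, using Theorem~\ref{thm:perfect} to move between the class (S) / Dedekind-completeness framework and dual-space representations, and re-using the auxiliary-risk-measure device from the proof of Theorem~\ref{thm:Grothendieck}(1). For the direction (2)$\Rightarrow$(1), I would observe that $\ca^*=\Linfty$ places us into the equivalent conditions of Theorem~\ref{thm:perfect} (with the predual $\CY:=\ca\subset ba_{\mbf c}$). This supplies $\sca^*=\Linfty$, and the argument (4)$\Rightarrow$(5) of that theorem shows that the predual $\ca$ embeds as a sublattice of $(\Linfty)^\sim_n=\sca$ by Proposition~\ref{prop:(A1)-(A3)}(3), so $\ca=\sca$. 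Theorem~\ref{thm:Grothendieck}(2) then yields \ref{G1}$\iff$\ref{G3}; since $\ca=\sca$, the property \ref{G3} coincides with \ref{G2}, delivering (1).

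For (1)$\Rightarrow$(2) I would first apply (1) to the canonical convex monetary risk measure $\rho(X)=\inf\{k\in\R\mid X\preceq k\ind_\Omega\}$, whose Fatou property \ref{G1} is clear because each sublevel set $\{Z\preceq k\ind_\Omega\}=-\Linftyplus+k\ind_\Omega$ is order closed. By (1), $\rho$ satisfies \ref{G2}, so $\CA_\rho=-\Linftyplus$ is $\sigma(\Linfty,\ca)$-closed. A standard Hahn--Banach argument then produces the dual description
\[\Linftyplus=\bigl\{X\in\Linfty\mid\forall\,\mu\in\ca_+:\ \textstyle\int X\,d\mu\ge 0\bigr\}.\]

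The key technical step is to deduce $\ca=\sca$. Given $\mu\in\ca_+$ with $\mu(\Omega)>0$, I would normalise to $\widetilde\mu:=\mu/\mu(\Omega)\in\ca\cap\Delta(\CF)$, fix $(X_\alpha)_{\alpha\in I}\subset-\Linftyplus$ with $X_\alpha\uparrow 0$ in order, set $\iota:=\sup_\alpha\int X_\alpha\,d\widetilde\mu\le 0$, and introduce the auxiliary convex monetary risk measure
\[\widehat\rho(X):=\max\bigl\{\rho(X),\ \E_{\widetilde\mu}[X]-\iota\bigr\},\]
mirroring the device in the proof of Theorem~\ref{thm:Grothendieck}(1). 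The affine part $X\mapsto\E_{\widetilde\mu}[X]-\iota$ admits a trivial \ref{G2}-representation with penalty $\iota$ at $\widetilde\mu$ and $+\infty$ elsewhere, and the pointwise maximum of two \ref{G2}-representable functionals is itself \ref{G2}-representable with penalty the pointwise minimum of the two originals. Hence $\widehat\rho$ satisfies \ref{G2}, and by (1) also \ref{G1}. Evaluating the Fatou inequality at $0$ yields $-\iota=\widehat\rho(0)\le\liminf_\alpha\widehat\rho(X_\alpha)\le 0$, so $\iota=0$; Proposition~\ref{prop:(A1)-(A3)}(3) then places $\widetilde\mu$ in $(\Linfty)^\sim_n=\sca$, and consequently $\mu\in\sca$.

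Finally, the dual description of $\Linftyplus$ forces $\ca_+\approx\CP$: any $A\in\CF$ with $\mu(A)=0$ for all $\mu\in\ca_+$ satisfies $-s\ind_A\in\Linftyplus$ for every $s>0$, forcing $\mbf c(A)=0$. Thus $\sca=\ca$ separates the points of $\Linfty$, and Proposition~\ref{prop}(1) yields the class (S) property of $\CP$. Combined with the standing assumption of Dedekind completeness of $\Linfty$, Theorem~\ref{thm:perfect} furnishes $\sca^*=\Linfty$, hence $\ca^*=\Linfty$. The main obstacle is the construction of $\widehat\rho$: normalising $\mu$ to recover cash-additivity, and verifying that its maximum with $\rho$ retains the \ref{G2}-representation, are the delicate steps, in close parallel with the proof of Theorem~\ref{thm:Grothendieck}(1).
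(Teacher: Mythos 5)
Your proof is correct, and its overall architecture coincides with the paper's: both directions reduce to showing $\sca=\ca$ and then invoke Theorem~\ref{thm:perfect} (for (1)$\Rightarrow$(2)) respectively Theorem~\ref{thm:Grothendieck}(2) together with the identification of \ref{G2} and \ref{G3} when $\sca=\ca$ (for (2)$\Rightarrow$(1)). The one place where you take a genuinely longer route is the key step of (1)$\Rightarrow$(2), namely placing a given $0\neq\mu\in\ca_+$ into $\sca$. You re-run the two-stage device from the proof of Theorem~\ref{thm:Grothendieck}(1): first the worst-case risk measure to get weak closedness of $\Linftyplus$, then the auxiliary $\widehat\rho=\max\{\rho,\E_{\widetilde\mu}[\cdot]-\iota\}$, whose \ref{G2}-representability you justify via the (correct) observation that a maximum of representable functionals is representable with the pointwise minimum of the penalties. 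The paper short-circuits all of this: the normalised functional $\rho(X)=\mu(\Omega)^{-1}\int X\,d\mu$ is \emph{itself} a convex monetary risk measure trivially satisfying \ref{G2}, so hypothesis (1) hands it the Fatou property \ref{G1} directly; for a linear functional, \ref{G1} applied to both $X$ and $-X$ yields order continuity, and Proposition~\ref{prop:(A1)-(A3)}(3) gives $\mu\in\sca$ in one line. (The $\widehat\rho$ construction is genuinely needed in Theorem~\ref{thm:Grothendieck}(1) only because there one starts from a functional in an abstract predual $\CY$ and must manufacture a risk measure around it; here the measure already produces one.) Your closing detour through the dual description of $\Linftyplus$ to get $\ca_+\approx\CP$ is likewise harmless but dispensable, since $\ca$ always separates the points of $\Linfty$, so $\sca=\ca$ alone delivers the class (S) property via Proposition~\ref{prop}(1). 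Everything you write is sound; it is simply heavier machinery than the statement requires.
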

\begin{proof}
(1) implies (2): Let $0\neq \mu\in\ca_+$. Then
\[\rho(X):=\tfrac 1{\mu(\Omega)}\int X\,d\mu,\quad X\in\Linfty,\]
is a convex monetary risk measure which satisfies \ref{G2} and therefore also \ref{G1}. By Proposition~\ref{prop:(A1)-(A3)}(3), this is only possible if $\mu\in\sca$. 
As consequently $\sca=\ca$, Dedekind completeness of $\Linfty$ in conjunction with Theorem~\ref{thm:perfect} finally imply $\ca^*=\sca^*=\Linfty$. 

(2) implies (1): If $\ca^*=\Linfty$, we have $\sca=\ca$ and the equivalence of \ref{G1} and \ref{G3} for convex monetary risk measures $\rho\colon\Linfty\to\R$ (Theorem~\ref{thm:Grothendieck}(2)). As then \ref{G2} and \ref{G3} are the same, the assertion is proved. 
\end{proof}

\begin{remark}
In our investigation of the Fatou property under uncertainty we could have equivalently worked with the larger class of quasiconvex risk measures. These are generally not cash-additive. While the mathematical approach---verifying closedness properties of convex monotone sublevel sets---is the same, the dual representation of a quasiconvex risk measure is less informative and transparent. 
\end{remark}

\subsection{Concrete instances of $\Linfty$ are Dedekind complete only if $\CP$ is of class \tn{(S)}}\label{sec:logic:cor}

This concluding subsection demonstrates how nondominated robust models test the limits of {\bf ZFC}. Although there is no proof that Dedekind completeness of $\Linfty$ implies that $\CP$ is of class \tn{(S)}, any \textit{concrete} example of $\CP$ such that $\Linfty$ is Dedekind complete will turn out to be of class \tn{(S)}; cf.\ Corollary~\ref{cor:example}.

A nonempty set $\mf X$ admits a solution to Banach's measure problem if there is a diffuse probability measure $\pi\colon2^{\mf X}\to[0,1]$ on the power set $2^{\mf X}$, i.e.\ $\pi(\{\mf x\})=0$ for all $\mf x\in \mf X$. Banach's measure problem is said to have a solution if there is a nonempty set $\mf X$ which admits a solution to Banach's measure problem. The following lemma follows with \cite[Lemmas 6 \& 7, Theorem 3]{Luschgy}.

\begin{lemma}\label{lem:main1}
Suppose $\CP\subset\Delta(\CF)$ is nonempty. Then the following are equivalent: 
\begin{itemize}
\item[\tn{(1)}]${ca_{\mbf c}}^*=\Linfty$.
\item[\tn{(2)}]$\CP$ is of class \tn{(S)}, $\Linfty$ is Dedekind complete, and no disjoint supported alternative $\CQ\approx \CP$ admits a solution to Banach's measure problem. 
\end{itemize}
\end{lemma}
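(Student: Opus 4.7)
The plan is to combine the structural results of Theorem~\ref{thm:perfect} and Lemma~\ref{lem:Luschgy} with \cite[Lemmas 6, 7 \& Theorem 3]{Luschgy}, which supply the bridge between non-existence of diffuse measures on power sets of disjoint supported alternatives and localisability properties of $\mbf c$. I would handle each direction separately.

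For the implication $(1) \Rightarrow (2)$: assuming $\ca^* = \Linfty$, the predual $\ca$ is a Banach lattice with dual $\Linfty$, so Theorem~\ref{thm:perfect} immediately delivers that $\CP$ is of class \tn{(S)} and $\Linfty$ is Dedekind complete. Next I would verify the identity $\sca = \ca$: since $\sca$ is a norm-closed band in $\ca$ (Lemma~\ref{lem:Luschgy}), a proper inclusion would allow Hahn--Banach to produce a non-zero $\phi \in \ca^*$ vanishing on $\sca$; identifying $\phi$ with some $X \in \Linfty$ via the assumed duality and invoking that $\sca$ separates the points of $\Linfty$ under class \tn{(S)} (Proposition~\ref{prop}(1)), I would obtain $X = 0$, a contradiction. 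To exclude that any disjoint supported alternative $\CQ$ admits a diffuse $\pi \colon 2^\CQ \to [0,1]$, I would assume such a $\pi$ and form the countably additive mixture $\mu(E) := \int_\CQ \Q(E)\,\pi(d\Q) \in \ca$. Adapting the proof of Proposition~\ref{prop:Polish}, disjointness of supports gives $\mu(S(\Q_0)) = \pi(\{\Q_0\}) = 0$ for every $\Q_0 \in \CQ$; combining this with $\sup_{\Q \in \CQ}\ind_{S(\Q)} = \ind_\Omega$ in $\Linfty$ together with order continuity of $\wedge$, I would deduce $\mu \in \ca \setminus \sca$, contradicting the already established $\sca = \ca$.

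For the implication $(2) \Rightarrow (1)$: Theorem~\ref{thm:perfect} immediately yields $\sca^* = \Linfty$ under the hypotheses, so it suffices to show $\sca = \ca$. Fix $\mu \in \ca_+$ and a disjoint supported alternative $\CQ$. Countable additivity of $\mu$ and disjointness of the $S(\Q)$ force all but countably many values $\mu(S(\Q))$ to vanish; collect the exceptions in $\CQ_0 \subset \CQ$. The plan is then to decompose $\mu = \mu_1 + \mu_2$, where $\mu_1$ concentrates on the measurable countable union $\bigcup_{\Q \in \CQ_0} S(\Q)$ and is manifestly supported, while any non-zero residual $\mu_2$ would push forward to a non-trivial diffuse probability on $2^{\CQ \setminus \CQ_0}$, contradicting the assumed absence of solutions to Banach's measure problem. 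Hence $\mu_2 = 0$ and $\mu \in \sca$, which gives $\sca = \ca$ and therefore $\ca^* = \sca^* = \Linfty$.

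The main obstacle is the rigorous construction of the push-forward measure on $2^{\CQ \setminus \CQ_0}$ associated with the residual $\mu_2$ and the verification of its diffuseness. Realising this requires a careful interplay between Dedekind completeness of $\Linfty$ (which ensures that arbitrary suprema $\sup_{\Q} \ind_{S(\Q)}$ are meaningful) and the transfinite management of $\CP$-polar sets outside the countable part $\CQ_0$; this is exactly where \cite[Lemmas 6, 7 \& Theorem 3]{Luschgy} perform the heavy lifting, so the cleanest route is to import their statements verbatim rather than reconstruct them.
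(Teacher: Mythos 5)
Your proposal is correct, but note that the paper does not actually write out a proof of this lemma: it disposes of it in one sentence by citing \cite[Lemmas 6 \& 7, Theorem 3]{Luschgy}, which is also your declared fallback. What you add on top of that is a workable explicit reconstruction, and it does go through. In $(1)\Rightarrow(2)$, the dual-space hypothesis feeds into Theorem~\ref{thm:perfect}(4), the Hahn--Banach argument for $\sca=\ca$ is sound (though the separation of points of $\Linfty$ by $\sca$ under class (S) is Proposition~\ref{prop}(2) combined with $\CQ\subset sca(\Linfty)$, not Proposition~\ref{prop}(1), which is the converse implication), and the mixture $\mu(E)=\int_\CQ\Q(E)\,\pi(d\Q)$ is shown to be unsupported exactly as in the proof of Proposition~\ref{prop:Polish}: $\mu(S(\Q))=\pi(\{\Q\})=0$ for every $\Q$, and then $\ind_{S(\mu)}=\ind_{S(\mu)}\wedge\sup_{\Q}\ind_{S(\Q)}=\sup_{\Q}\bigl(\ind_{S(\mu)}\wedge\ind_{S(\Q)}\bigr)=0$ by Lemma~\ref{lem:supremum} and \cite[Lemma 1.5]{AliBurk}, contradicting $\mu(S(\mu))=1$. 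In $(2)\Rightarrow(1)$, the obstacle you flag---defining the pushforward on $2^{\CQ\setminus\CQ_0}$ when arbitrary unions $\bigcup_{\Q\in\mathcal E}S(\Q)$ need not be measurable---is resolved precisely by Dedekind completeness: $\sup_{\Q\in\mathcal E}\ind_{S(\Q)}$ exists and equals $\ind_{B_{\mathcal E}}$ for some $B_{\mathcal E}\in\CF$ by Lemma~\ref{lem:sets}, so $\pi(\mathcal E):=\mu_2(B_{\mathcal E})/\mu_2(\Omega)$ is well defined, countably additive (countable suprema of indicators are indicators of unions up to $\CP$-polar, hence $\mu_2$-null, sets), diffuse because $\mu_2(S(\Q))=0$ for all $\Q$, and has total mass one by Lemma~\ref{lem:supremum}. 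The remaining details (at most countably many $\Q$ with $\mu(S(\Q))>0$ by essential disjointness; $\mu_1\ll\sum_n 2^{-n}\Q_n$ and hence supported) are routine. So each approach buys something: the paper's citation is economical but opaque, while your argument makes visible exactly where Dedekind completeness and the absence of a diffuse measure on the power set of the index set enter.
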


\begin{lemma}\label{lem:main2}Consider the following statements:
\begin{itemize}
\item[\tn{(1)}]Banach's measure problem has no solution.
\item[\tn{(2)}]$\ca^*=\Linfty$ if and only if $\Linfty$ is Dedekind complete.
\item[\tn{(3)}]$\Linfty$ is Dedekind complete only if $\CP$ is of class \tn{(S)}.
\end{itemize}
Then $\tn{(1)}\iff\tn{(2)}\implies\tn{(3)}$.
\end{lemma}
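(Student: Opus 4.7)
The approach factors into three implications: $(2)\Rightarrow(3)$, $(1)\Rightarrow(2)$, and $(2)\Rightarrow(1)$. All three reduce to Lemma~\ref{lem:main1}, which identifies $\ca^* = \Linfty$ with the conjunction of class \tn{(S)}, Dedekind completeness of $\Linfty$, and the absence of a Banach-measure-problem solution on any disjoint supported alternative.

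First I would show $(2)\Rightarrow(3)$: assuming (2) and Dedekind completeness of $\Linfty$, (2) gives $\ca^* = \Linfty$, so $\Linfty$ is the dual of the Banach lattice $(\ca, TV)$; invoking Theorem~\ref{thm:perfect} (the implication $(4)\Rightarrow(1)$) then delivers class \tn{(S)}.

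Next I would prove $(1)\Rightarrow(2)$. Under (1) the Banach-clause in Lemma~\ref{lem:main1} is vacuous, so the forward direction of (2) is immediate from that lemma. The reverse direction amounts to showing that Dedekind completeness of $\Linfty$ combined with (1) forces $\CP$ to be of class \tn{(S)}; by Lemma~\ref{lem:Luschgy} it suffices to argue that every $\mu \in \ca_+$ is supported. Given such $\mu$, Dedekind completeness and Lemma~\ref{lem:simple}(2) yield $S \in \CF$ with $\ind_S = \inf\{\ind_A \mid \mu(A^c)=0\}$, and supportedness of $\mu$ is equivalent to $\mu(S^c)=0$. If instead $\mu(S^c) > 0$, the minimality of $S$ confines a nowhere-supportable piece of $\mu$ to $S^c$; by the structure theorems of \cite[Theorem~3 and Lemmas 6--7]{Luschgy}, this atomless measure-algebra phenomenon produces a diffuse probability on the power set of some nonempty set, contradicting (1).

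Finally for $(2)\Rightarrow(1)$ I would argue contrapositively. If $\pi\colon 2^{\mf X}\to[0,1]$ is a diffuse probability, take the Dirac experiment $\Omega:=\mf X$, $\CF:=2^{\mf X}$, $\CP:=\{\delta_{\mf x}\mid \mf x\in\mf X\}$: here every $\CP$-polar set is empty, $\Linfty=\ell^\infty(\mf X)$ is Dedekind complete, and $\CP$ is its own disjoint supported alternative (each $\delta_{\mf x}$ is supported by $\{\mf x\}$, and these are pairwise disjoint). Yet $\pi\in\ca$ is unsupported, for any candidate support $S$ must lie inside $\mf X\setminus\{\mf x\}$ whenever $\pi(\{\mf x\})=0$, forcing $S=\emptyset$ while $\pi(S^c)=1$. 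Consequently Lemma~\ref{lem:main1} (with the Banach-clause witnessed by $\pi$ on $2^\CP\cong 2^{\mf X}$) yields $\ca^*\neq\Linfty$ despite $\Linfty$ being Dedekind complete, so (2) fails.

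The key obstacle is the reverse direction of $(1)\Rightarrow(2)$: turning the failure of supportedness of a single measure, in the Dedekind-complete regime, into a genuine Banach-measure-problem solution. This is precisely where the measure-algebra classification in \cite{Luschgy,Fremlin2} does the heavy lifting; the rest of the argument is bookkeeping around Lemma~\ref{lem:main1} and Theorem~\ref{thm:perfect}.
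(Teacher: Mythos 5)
Your decomposition and two of the three implications are sound: for $\tn{(2)}\Rightarrow\tn{(3)}$ your direct route via Theorem~\ref{thm:perfect} (dual of a Banach lattice $\Rightarrow$ class (S)) works, and your $\tn{(2)}\Rightarrow\tn{(1)}$ via the Dirac experiment on $(\mf X,2^{\mf X})$ together with Lemma~\ref{lem:main1} is essentially the paper's argument in contrapositive form. The problem lies in the hard half of $\tn{(1)}\Rightarrow\tn{(2)}$, namely that Dedekind completeness of $\Linfty$ together with $\tn{(1)}$ forces every $\mu\in\ca_+$ to be supported. Your argument there reduces, via Lemma~\ref{lem:simple}(2), to excluding the case $\mu(S^c)>0$ with $\ind_S=\inf\{\ind_A\mid\mu(A^c)=0\}$, and then asserts that ``the minimality of $S$ confines a nowhere-supportable piece of $\mu$ to $S^c$'' and that the structure theorems of \cite{Luschgy} ``produce a diffuse probability on the power set of some nonempty set.'' That is precisely the statement to be proved, not a proof of it. To extract a solution of Banach's measure problem from a non-supported $\mu$ one must (a) exhibit a concrete index set $\mf X$ (e.g.\ a maximal disjoint system of non-polar events inside $S^c$) and (b) show that $\mu$ induces a countably additive, diffuse probability on the \emph{full power set} $2^{\mf X}$; step (b) is where Dedekind completeness has to be used in an essential way (to aggregate arbitrary disjoint subfamilies inside the lattice), and neither your sketch nor the results of \cite{Luschgy} as invoked for Lemma~\ref{lem:main1} deliver it off the shelf -- those results characterise $\ca^*=\Linfty$ and presuppose supported alternatives, so they cannot be applied to the non-supported piece of $\mu$ without circularity.

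The paper closes exactly this gap by citing \cite[363S, Theorem]{Fremlin3}: statement $\tn{(1)}$ is equivalent to the assertion that \emph{every} Dedekind complete vector lattice $\CX$ satisfies $\CX^\sim_c=\CX^\sim_n$. Combined with Proposition~\ref{prop:(A1)-(A3)}, which identifies $\ca=(\Linfty)^\sim_c$ and $\sca=(\Linfty)^\sim_n$, this yields $\ca=\sca$ under $\tn{(1)}$ and Dedekind completeness, hence the class (S) property and then $\ca^*=\Linfty$ via Theorem~\ref{thm:perfect}. If you wish to keep your route, you should replace the appeal to ``measure-algebra classification'' by this (or an equivalent) citation; as written, the central implication of the lemma rests on an unproven claim.
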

\begin{proof}
(1) implies (2): By \cite[363S, Theorem]{Fremlin3}, (1) holds if and only if every Dedekind complete vector lattice $\CX$ satisfies $\CX^\sim_c=\CX^\sim_n$. If $\Linfty$ is Dedekind complete, Proposition~\ref{prop:(A1)-(A3)} therefore implies ${ca_{\mbf c}}=(\Linfty)^\sim_c=(\Linfty)^\sim_n=\sca$. In particular,  $\CP$ is of class (S). Theorem~\ref{thm:perfect} yields $\ca^*=\Linfty$. Conversely, if $\ca^*=\Linfty$, Dedekind completeness of the latter space follows from Theorem~\ref{prop}(2).

(2) implies (1): Let $\Omega$ be an arbitrary nonempty set. Consider the measurable space $(\Omega,2^\Omega)$ and the set $\CP=\{\delta_\om\mid \om\in\Omega\}$ of probability measures which is of class (S). As $\Linfty$ agrees with the space of all bounded functions $f\colon\Omega\to\R$ and this space is Dedekind complete, (2) implies ${ca_{\mbf c}}^*=\Linfty$, and Lemma~\ref{lem:main1} implies that $\CP$ does not admit a solution to Banach's measure problem, so neither does $\Omega$.

At last, we have seen above that (1) implies the class (S) property of $\CP$. 
\end{proof}

In the subsequent corollary, we say that a property is {\em verifiable in {\bf ZFC}} if in {\bf ZFC} one can show that it holds. The terminology ``to construct an example in {\bf ZFC}" means to give an example in {\bf ZFC} in which the involved properties are verifiable. 
\begin{corollary}\label{cor:example}
It is impossible to construct an example in {\bf ZFC} of a measurable space $(\Omega,\CF)$ and a nonempty set $\CP\subset\Delta(\CF)$ such that $\Linfty$ is Dedekind complete and  additionally at least one of the following properties holds:
\begin{itemize}
\item[\tn{(1)}]$\ca^*\neq\Linfty$.
\item[\tn{(2)}]$\CP$ is not of class \tn{(S)}.
\item[\tn{(3)}]$\Linfty$ is not the dual space of a normed space $(\CY,\Norm_\CY)$. 
\end{itemize}
\end{corollary}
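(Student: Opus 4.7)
The plan is to leverage Lemma~\ref{lem:main2} together with the well-known set-theoretic fact that the nonexistence of a solution to Banach's measure problem is consistent with \textbf{ZFC}. Specifically, if no measurable cardinal exists, then Banach's measure problem has no solution, and the nonexistence of measurable cardinals is consistent relative to \textbf{ZFC}. Hence there is a model $\mathcal{M}$ of \textbf{ZFC} in which statement (1) of Lemma~\ref{lem:main2} holds.

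Suppose for contradiction that one could construct in \textbf{ZFC} an example $(\Omega,\CF,\CP)$ in which $\Linfty$ is Dedekind complete and at least one of (1)--(3) of the corollary is verifiable. Then the same example exists in $\mathcal{M}$ and the relevant properties remain verified there. I would then argue in $\mathcal{M}$ as follows:
\begin{itemize}
    \item For case (1): By Lemma~\ref{lem:main2}, statement (1) of the lemma implies statement (2), so $\ca^*=\Linfty$ holds for every Dedekind complete $\Linfty$ in $\mathcal{M}$. This directly contradicts assumption (1) of the corollary.
    \item For case (2): By Lemma~\ref{lem:main2}, statement (1) of the lemma implies statement (3), so Dedekind completeness of $\Linfty$ forces $\CP$ to be of class (S) in $\mathcal{M}$, contradicting assumption (2).
    \item For case (3): Combining the two previous deductions, $\CP$ is of class (S) and $\Linfty$ is Dedekind complete in $\mathcal{M}$; Theorem~\ref{thm:perfect}, in particular the implication (1)$\Rightarrow$(4), then identifies $\Linfty$ with the dual space of the Banach lattice $\sca$. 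This contradicts assumption (3).
\end{itemize}

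In each case the existence of the putative \textbf{ZFC}-constructed example is incompatible with a model of \textbf{ZFC} that certainly exists (namely $\mathcal{M}$). Hence no such example can be constructed in \textbf{ZFC}, which is the desired conclusion. The main conceptual step is the appeal to the consistency of ``Banach's measure problem has no solution" with \textbf{ZFC}; once this is taken as input, everything else is a direct bookkeeping exercise combining Lemma~\ref{lem:main2} and Theorem~\ref{thm:perfect}, so there is no serious technical obstacle beyond invoking the correct set-theoretic fact.
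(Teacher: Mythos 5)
Your architecture is correct and is, modulo the completeness theorem, the same as the paper's: the paper argues that such a construction would yield a \textbf{ZFC}-proof that Banach's measure problem has a solution and hence (via the fact that the least cardinal admitting a solution is weakly inaccessible) a \textbf{ZFC}-proof of the existence of weakly inaccessible cardinals, which is known to be impossible; you instead pass to a model $\mathcal M$ of \textbf{ZFC} in which Banach's measure problem has no solution and derive a contradiction there from Lemma~\ref{lem:main2} and Theorem~\ref{thm:perfect}. Your treatment of the three cases, including routing case (3) through Theorem~\ref{thm:perfect}, is fine.

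There is, however, one step that is false as stated: the nonexistence of measurable cardinals does \emph{not} imply that Banach's measure problem has no solution. A solution on a set $\mf X$ amounts to the existence of a real-valued measurable cardinal $\le|\mf X|$, and real-valued measurable cardinals need not be (two-valued) measurable: it is consistent, relative to a measurable cardinal, that $2^{\aleph_0}$ is real-valued measurable while no measurable cardinal exists, so the model $\mathcal M$ you need is not delivered by ``no measurable cardinals'' alone. The repair is exactly the ingredient the paper cites from Jech: the least cardinal admitting a solution to Banach's measure problem is weakly inaccessible, and \textbf{ZFC} (if consistent) cannot prove the existence of weakly inaccessible cardinals, since for the least weakly inaccessible $\kappa$ the structure $L_\kappa$ models \textbf{ZFC} and G\"odel's second incompleteness theorem applies; equivalently, \textbf{ZFC}${}+V=L$ proves that Banach's measure problem has no solution, so $\mathcal M$ exists. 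With ``measurable'' replaced by ``weakly inaccessible'' (or ``real-valued measurable''), your proof is complete and coincides in substance with the paper's.
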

\begin{proof}
Suppose the construction of an example as described in (1)--(2) is possible in {\bf ZFC}. According to Lemma~\ref{lem:main2}, Banach's measure problem must have a solution.  
Similarly, if an example as in (3) exists, Theorem~\ref{thm:perfect} implies that one deals with a Dedekind complete space $\Linfty$ such that $\CP$ is not of class (S). Hence, also such an example proves that Banach's measure problem has a solution. 
Let $\kappa=|\mf X|$ be the least cardinal which admits such a solution. By \cite[Corollaries 10.7 \& 10.15]{Jech}, $\kappa$ is weakly inaccessible. As such, the construction would provide a proof of the existence of weakly inaccessible cardinals in {\bf ZFC}, and such a proof is known to be impossible; cf.\ \cite[p.\ 16]{Kanamori}. 
\end{proof}
Let us close with two important consequences of Corollary~\ref{cor:example}. Consider a concrete example of $\CP$ for which:
\begin{itemize}
\item $\CP$ being of class \tn{(S)} and $\Linfty$ being Dedekind complete are decidable in {\bf ZFC}.\footnote{~A property is decidable in {\bf ZFC} if said property or its negation are verifiable in {\bf ZFC}.} Then $\Linfty$ is Dedekind complete only if $\CP$ is of class \tn{(S)} (Corollary~\ref{cor:example}(2)).
\item $\Linfty$ being Dedekind complete and $\ca^*=\Linfty$ are decidable in {\bf ZFC}. Then $\Linfty$ is Dedekind complete only if $\ca^*=\Linfty$ (Corollary~\ref{cor:example}(1)). The latter is a crucial addendum to \cite{FatouCor}. 
\end{itemize}

\appendix

\section{Vector lattices and spaces of measures}\label{sec:prelim}

This appendix is a brief recap of the theory as presented in the monographs \cite{Ali,AliBurk,AliBurk2,MeyNie}. 

\smallskip

A tuple $(\CX,\peq)$ is a \emph{vector lattice} if $\CX$ is a real vector space and $\peq$ is a partial order on $\CX$ with the following properties: 
\begin{itemize}
\item For $x,y,z\in\CX$ and scalars $\alpha\geq 0$, $x\peq y$ implies $\alpha x+z\peq \alpha y+z$.
\item For all $x,y\in\CX$ there is a least upper bound $z:=x\vee y=\sup\{x,y\}$, the \textit{maximum} of $x$ and $y$, which satisfies $x\peq z$ and $y\peq z$ as well as $z\peq z'$ whenever $x\peq z'$ and $y\peq z'$ hold.
\end{itemize}
The existence of the absolute value $|x|=x\vee (-x)$, the positive part $x^+=x\vee 0$, the negative part $x^-=(-x)\vee 0$, and the minimum $x\wedge y:=\inf\{x,y\}=-\sup\{-x,-y\}$ follow. The \emph{positive cone} $\CX_+$ is the set of all $x\in\CX$ such that $x\succeq 0$. 
If $\CX$ is additionally carries a norm $\Norm$ which satisfies $\|x\|\leq \|y\|$ whenever $|x|\peq |y|$, $(\CX,\peq,\Norm)$ is a \emph{normed vector lattice}. A normed vector lattice $(\CX,\peq,\Norm)$ is a {\em Banach lattice} if $\Norm$ is complete. 

\smallskip

\textbf{Linear functionals:} 
If for a linear functional $\phi\colon\CX\to\R$ each set $\{\phi(z)\mid x\peq z\peq y\}\subset\R$ is bounded, $x,y\in\CX$,  $\phi$ is \emph{order bounded} and belongs to the {\em order dual} $\CX^\sim$ of $\CX$. The latter carries an order given by the positive cone $\CX^\sim_+$ comprising all order bounded linear functionals $\phi\in\CX^\sim$ satisfying $\phi(x)\geq 0$ for all $x\in\CX_+$.

A net $(x_\alpha)_{\alpha\in I}\subset\CX$ is order convergent to $x\in\CX$ if there is another net $(y_\alpha)_{\alpha\in I}$ which is \textit{decreasing} ($\alpha,\beta\in I$ and $\alpha\leq \beta$ implies $y_\beta\peq y_\alpha$), satisfies $\inf_{\alpha\in I}y_\alpha:=\inf\{y_\alpha\mid\alpha\in I\}=0$, and for all $\alpha\in I$ it holds that $0\peq |x_\alpha-x|\peq y_\alpha$. The \emph{order continuous dual} is the space $\CX^\sim_n\subset\CX^\sim$ of all order bounded linear functionals $\phi$ which are order continuous, i.e.\ $\phi$ carries an order convergent net with limit $x$ in $\CX$ to a net converging to $\phi(x)$ in $\R$. The \emph{$\sigma$-order continuous dual} is the space $\CX^\sim_c\subset\CX^\sim$ of all order bounded linear functionals $\phi$ which carry an order convergent sequence with limit $x$ in $\CX$ to a convergent sequence with limit $\phi(x)$ in $\R$. Obviously, $\CX^\sim_n\subset\CX^\sim_c$, and both spaces are vector lattices in their own right. 

Given an ideal $\CB\subset\CX^\sim$ (see below) and $x\in\CX$, the linear functional 
\begin{equation}\label{eq:embedding}\ell_x\colon\CB\ni\phi\mapsto \phi(x)\end{equation}
is order continuous on the lattice $(\CB,\peq)$. The map $\ell_\bullet\colon\CX\to\CB^\sim_n$ is a lattice homomorphism (i.e.\ it preserves the lattice structure) and is injective if and only if $\CB$ separates the points of $\CX$ (i.e.\ $\phi(x)=0$ holds for all $\phi\in\CB$ only if $x=0$). A vector lattice is \emph{perfect} if $\ell_\bullet\colon\CX\to(\CX^\sim_n)^\sim_n$ is bijective, i.e.\ $(\CX^\sim_n)^\sim_n$ may be canonically identified with $\CX$. 

\smallskip

\textbf{Vector sublattices, ideals and bands:} Given a vector lattice $\CX$, a subspace $\CY\subset\CX$ is a \emph{vector sublattice} if for every $x,y\in\CY$, the maximum $x\vee y$ computed in $\CX$ lies in $\CY$. It is \emph{order dense} in $\CX$ if for all $0\prec x\in\CX$ we can find some $y\in\CY$ such that $0\prec y\peq x$. It is \emph{majorising} if for every $x\in\CX$ there is $y\in\CY$ such that $x\peq y$. 

A vector subspace $\CB$ of $\CX$ with the property that, for all $y\in\CB$, $\{x\in\CX\mid |x|\peq |y|\}\subset\CB$, is an \emph{ideal}. Every ideal is a vector sublattice. A subset $\CC\subset\CX$ is {\em order closed} if the limit of each order convergent net $(x_\alpha)_{\alpha\in I}\subset\CC$ lies in $\CC$. An ideal $\CB\subset\CX$ is a {\em band} if it is an order closed subset of $\CX$. The \emph{disjoint complement} of an ideal $\CB$ is defined by 
\[\CB^{\tn d}:=\{x\in\CX\mid\,\forall\,y\in\CB:~|x|\wedge |y|=0\}.\]
It is always a band.
Given $\phi\in\CX^\sim$, its \emph{null ideal} is the ideal $N(\phi):=\{x\in\CX\mid |\phi|(|x|)=0\}$, and its \emph{carrier} is $C(\phi):=N(\phi)^{\tn d}$.

We call a Dedekind complete vector lattice $(\CX^\delta,\tle)$ the \emph{Dedekind completion} of $(\CX,\peq)$ if there is a linear map $J\colon\CX\to {\CX^\delta}$ such that (i) $J$ is a strictly positive lattice homomorphism ($J(|x|)=0$ for $x\in\CX$ implies $x=0$), (ii) $J(\CX)$ is an order dense and  majorising vector sublattice of ${\CX^\delta}$. $\CX$  has a Dedekind completion if and only if $\CX$ is Archimedean, i.e.\ $\tfrac 1 nx\overset o\longrightarrow 0$, $n\to\infty$, for every $x\in\CX_+$. Last, a \emph{lattice isomorphism} is a bijective lattice homomorphism.\footnote{~The term ``lattice isomorphism" is ambiguous in the literature. \cite[Definition 1.30]{AliBurk}, for instance, replaces our assumption of bijectivity by mere injectivity. By \cite[p.\ 16]{AliBurk}, however, two vector lattices are \textit{lattice isomorphic} if there is a surjective lattice isomorphism between them. It is therefore worth pointing out that the results from \cite{AliBurk} we use deal with lattice isomorphisms which are bijective.} 

\medskip

\section{Auxiliary results}\label{sec:appendix}

\begin{lemma}\label{lem:sets}
Suppose that for a set of events $\CA\subset\CF$ the supremum $\sup_{A\in\CA}\ind_A$ exists in ${L^0_{\mbf c}}$. Then there is an event $B\in\CF$ such that 
\begin{equation}\label{eq:app1}\ind_B=\sup_{A\in\CA}\ind_A.\end{equation}
Analogously, if $\inf_{A\in\CA}\ind_A$ exists in ${L^0_{\mbf c}}$, there is an event $C\in\CF$ such that 
\begin{equation}\label{eq:app2}\ind_C=\inf_{A\in\CA}\ind_A.\end{equation}
\end{lemma}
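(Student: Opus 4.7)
The plan is to establish the band-idempotence identity $X\wedge (\ind_\Omega - X)=0$ for $X:=\sup_{A\in\CA}\ind_A$, since elements of the order interval $[0,\ind_\Omega]\subset L^0_{\mbf c}$ with this property are exactly indicator classes of measurable events. First I observe that $0\peq \ind_A\peq \ind_\Omega$ for every $A$, so $0\peq X\peq \ind_\Omega$. Then I invoke the standard distributive law $\sup_\alpha(x_\alpha\wedge y)=(\sup_\alpha x_\alpha)\wedge y$ in a vector lattice, which holds whenever the supremum on the right exists (see \cite[Theorem~1.5]{AliBurk}; note that no upward-directedness of $(\ind_A)_{A\in\CA}$ is required), to obtain
\[
X\wedge(\ind_\Omega - X)=\sup_{A\in\CA}\big(\ind_A\wedge(\ind_\Omega-X)\big).
\]
Each term on the right vanishes: from $\ind_A\peq X$ one deduces $\ind_\Omega - X\peq \ind_\Omega - \ind_A=\ind_{A^c}$, and therefore $\ind_A\wedge(\ind_\Omega-X)\peq \ind_A\wedge \ind_{A^c}=0$.

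Second, I would transfer this lattice identity to a representative. Pick any $f\in X$; after the pointwise truncation $f\mapsto (f\vee 0)\wedge 1$, which leaves the equivalence class $X$ unchanged because $0\peq X\peq \ind_\Omega$, we may assume $0\le f\le 1$ everywhere on $\Omega$. Reading $X\wedge(\ind_\Omega-X)=0$ at the level of representatives gives $\min(f,\,1-f)=0$ $\CP$-q.s., i.e.\ $f$ takes only the values $0$ and $1$ outside a $\CP$-polar event. Letting $B:=\{f=1\}\in\CF$, we then have $f=\chi_B$ $\CP$-q.s., so $\ind_B=[f]=X$, which is~\eqref{eq:app1}.

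The infimum statement~\eqref{eq:app2} follows by duality. Applying the same distributive law to $-\inf_{A\in\CA}\ind_A=\sup_{A\in\CA}(-\ind_A)$, one has
\[
\ind_\Omega-\inf_{A\in\CA}\ind_A=\sup_{A\in\CA}\big(\ind_\Omega-\ind_A\big)=\sup_{A\in\CA}\ind_{A^c},
\]
and the first part yields an event $B\in\CF$ with $\sup_{A\in\CA}\ind_{A^c}=\ind_B$. Setting $C:=B^c$ then gives $\ind_C=\ind_\Omega-\ind_B=\inf_{A\in\CA}\ind_A$, which is~\eqref{eq:app2}. The only conceptually delicate point in the whole argument is the appeal to distributivity for an arbitrary (not necessarily directed) family, but this is a standard lattice fact needing nothing beyond the stated existence of the supremum; the remaining steps are routine passages between equivalence classes and measurable representatives.
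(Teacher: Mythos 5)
Your argument is correct, and it rests on the same two pillars as the paper's proof: the infinite distributive law $(\sup_\alpha x_\alpha)\wedge y=\sup_\alpha(x_\alpha\wedge y)$ from \cite[Lemma 1.5]{AliBurk} applied to the family $\{\ind_A\}_{A\in\CA}$, followed by reading off $B$ as a level set of a suitably normalised representative; the reduction of the infimum case via $\inf_{A\in\CA}\ind_A=\ind_\Omega-\sup_{A\in\CA}\ind_{A^c}$ is identical. The only difference is the intermediate identity: the paper derives $U=nU\wedge\ind_\Omega$ for all $n$ (using $\ind_A=(n\ind_A)\wedge\ind_\Omega$) and lets $n\uparrow\infty$ to land on $\ind_{\{u>0\}}$, whereas you show $X\wedge(\ind_\Omega-X)=0$, i.e.\ that $X$ is a component of $\ind_\Omega$, and then observe that components of $\ind_\Omega$ in $L^0_{\mbf c}$ are precisely the indicator classes. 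Both routes are equally elementary; yours is slightly more conceptual (it isolates the ``component'' characterisation, with $B=\{f=1\}$), the paper's avoids the pointwise dichotomy $f\in\{0,1\}$ by producing $B=\{u>0\}$ directly from a countable supremum. Every step you take checks out, including the delicate one you flag: distributivity for an arbitrary, not necessarily directed, family is indeed valid in any Riesz space once the supremum exists.
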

\begin{proof}
For \eqref{eq:app1}, suppose $U:=\sup_{A\in\CA}\ind_A$ exists. In particular, $U=U^+$ and $U\peq\ind_{\Omega}$ has to hold, i.e.\ $0\peq U\peq \ind_\Omega$. As for all $n\in\N$ the identity 
$\{\ind_A\mid A\in\CA\}=\{(n\ind_A)\wedge \ind_\Omega\mid A\in\CA\}$ holds, we obtain from \cite[Lemma 1.5]{AliBurk} for all $n\in\N$
\begin{align*}U&=\sup_{A\in\CA}((n\ind_A)\wedge \ind_\Omega)=nU\wedge \ind_{\Omega}.
\end{align*}
Note that $\sup_{n\in\N}(nU\wedge\ind_\Omega)=\ind_{\{u>0\}}$, where $u\in U$ is an arbitrary representative. Hence, we may set $B:=\{u>0\}$. \eqref{eq:app2} follows from \eqref{eq:app1} as $\inf_{A\in\CA}\ind_A=\ind_\Omega-\sup_{A\in\CA}\ind_{A^c}$ by \cite[Lemma 1.4]{AliBurk}. 
\end{proof}

\begin{lemma}\label{lem:supremum}
Suppose $\CP\subset\Delta(\CF)$ is of class \tn{(S)} with supported alternative $\CQ$. For all $X\in {L^0_{\mbf c}}_+$, $\sup_{\QW\in\CQ}X\ind_{S(\QW)}$ exists and is given by $X$. 
\end{lemma}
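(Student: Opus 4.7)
The plan is to verify that $X$ satisfies the two defining properties of a supremum of the set $\{X\ind_{S(\QW)}\mid \QW\in\CQ\}\subset{L^0_{\mbf c}}_+$: first that $X$ is an upper bound, and second that every upper bound $Y$ dominates $X$. The first part is immediate, since for every $\QW\in\CQ$ we have $0\peq\ind_{S(\QW)}\peq\ind_\Omega$ and $X\succeq 0$, whence $X\ind_{S(\QW)}\peq X$.

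For the nontrivial direction I would fix an upper bound $Y\in{L^0_{\mbf c}}$ and pick representatives $x\in X$ and $y\in Y$. The goal is to show that the event $N:=\{x>y\}\in\CF$ is $\CP$-polar, which is equivalent to $X\peq Y$. For each $\QW\in\CQ$, the inequality $X\ind_{S(\QW)}\peq Y$ translates into $x\chi_{S(\QW)}\le y$ holding $\CP$-q.s., and therefore $\QW$-a.s. (because $\CQ\approx\CP$). Restricting attention to $S(\QW)$, this yields $\QW(N\cap S(\QW))=0$.

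The main (and really only) step is now to invoke condition (b) of Definition~\ref{def:support}(1), which is available because each $\QW\in\CQ$ is supported: the $\QW$-nullity of $N\cap S(\QW)$ upgrades to the stronger conclusion that $N\cap S(\QW)$ is itself $\CP$-polar. Since this holds for every $\QW\in\CQ$, I can then test $N$ against any $\QW'\in\CQ$: using $\QW'(S(\QW')^c)=0$ and the fact that $N\cap S(\QW')$ is $\CP$-polar, I obtain
\[\QW'(N)=\QW'(N\cap S(\QW'))=0.\]
Thus $N$ is $\CQ$-polar, and the equivalence $\CQ\approx\CP$ implies $N$ is $\CP$-polar. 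Hence $x\le y$ holds $\CP$-q.s., i.e.\ $X\peq Y$, as required.

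I do not expect any step to pose a real obstacle; the only subtle point is to use condition (b) of the order support to promote a $\QW$-null statement inside $S(\QW)$ to $\CP$-polarity, rather than only working with $\QW$-a.s.\ inequalities. This, together with the equivalence $\CQ\approx\CP$ that allows passage between $\CP$-q.s.\ and $\QW$-a.s.\ statements for every $\QW\in\CQ$, is the crux of the argument.
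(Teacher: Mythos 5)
Your proof is correct and follows essentially the same route as the paper's: verify that $X$ is an upper bound, then show that any upper bound $Y$ satisfies $\Q(X\le Y)=1$ for every $\Q\in\CQ$ and conclude via $\CQ\approx\CP$. The only (harmless) difference is that your appeal to condition (b) of Definition~\ref{def:support}(1) is superfluous: once you have $\Q'(N\cap S(\Q'))=0$ and $\Q'(S(\Q')^c)=0$, the identity $\Q'(N)=\Q'(N\cap S(\Q'))=0$ already follows, so the argument really only uses $\Q(S(\Q))=1$ and the equivalence $\CQ\approx\CP$, exactly as in the paper.
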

\begin{proof}
Consider the set $\CC:=\{X\ind_{S(\QW)}\mid\QW\in\CQ\}$ which is order bounded from above by $X$. Moreover, $X$ is indeed the least upper bound of $\CC$. In order to prove this, consider any upper bound $Y$. Then, for all $\Q\in\CQ$, $X\ind_{S(\Q)}\peq Y\ind_{S(\Q)}$, whence we infer $\Q(X\le Y)=1$.
As $\CP\approx\CQ$, $\mbf c(X>Y)=0$. Equivalently, $X\peq Y$. 
\end{proof}

Given a Banach lattice $(\CX,\Norm_\CX)$, where $\Linfty\subset\CX\subset\Lzero$ holds, $ca(\CX)$ denotes in the following results the set of all $\mu\in\ca$ such that each $X\in\CX$ is $|\mu|$-integrable and such that
\[\phi_\mu(X):=\int X\,d\mu,\quad X\in\CX,\]
defines a continuous linear functional on $\CX$. $sca(\CX)$ is defined in complete analogy.

\begin{proposition}\label{prop:(A1)-(A3)}
Let $\emptyset\neq\CP\subset\Delta(\CF)$ and let $\Linfty\subset\CX\subset\Lzero$ be a Banach lattice such that $\CX$ is an ideal in $\Lzero$. 
\begin{itemize}
\item[\tn{(1)}]$ca(\CX)=\CX^\sim_c$.
\item[\tn{(2)}]For $\mu\in sca(\CX)$, carrier and null ideal of $\phi_\mu$ are given by $C(\phi_\mu)=\ind_{S(\mu)}\CX$ and $N(\phi_\mu)=\ind_{S(\mu)^c}\CX$, respectively. Both are bands in $\CX$.
\item[\tn{(3)}] $sca(\CX)=\CX^\sim_n$.
\end{itemize}
\end{proposition}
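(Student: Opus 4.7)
My plan is to prove the three statements in order, exploiting the duality between measures in $\ca$ and (suitably continuous) order-bounded linear functionals on $\CX$.

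For (1), the inclusion $ca(\CX) \subset \CX^\sim_c$ is the routine part. If $\mu \in ca(\CX)$, then $\phi_\mu$ is norm continuous and hence order bounded (continuous linear functionals on a Banach lattice are order bounded), and if $X_n \downarrow 0$ in $\CX$ then representatives can be chosen so that $f_n \downarrow 0$ $\CP$-q.s., hence $|\mu|$-a.e., which together with $f_1 \in L^1(|\mu|)$ yields $\phi_\mu(X_n) \to 0$ via dominated convergence. Conversely, for $\phi \in \CX^\sim_c$ I define $\mu(A) := \phi(\ind_A)$ (well-defined modulo $\CP$-polar events, since $\mbf c(A)=0$ gives $\ind_A=0$ in $\CX$). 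Countable additivity follows from $\sigma$-order continuity applied to $\ind_{\bigcup_{k \le n}A_k} \uparrow \ind_{\bigcup_k A_k}$ for disjoint $A_k$; finite total variation is bounded by $|\phi|(\ind_\Omega)<\infty$; and the identity $\phi = \phi_\mu$ extends from indicators to simple functions by linearity, then to $\Linfty$ by $\sigma$-order continuity, and finally to $\CX$ via the truncation $(X \wedge n\ind_\Omega) \vee (-n\ind_\Omega) \to X$ in order.

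For (2), I use the standard identity $|\phi_\mu| = \phi_{|\mu|}$ (from the Hahn decomposition). Then $X \in N(\phi_\mu)$ is equivalent to $\int |X|\, d|\mu| = 0$, i.e., $|\mu|(\{|X|>0\}) = 0$, which by the defining property of $S(\mu)$ means that $\{|X|>0\} \cap S(\mu)$ is $\CP$-polar, equivalently $X\ind_{S(\mu)} = 0$ in $\CX$, equivalently $X \in \ind_{S(\mu)^c}\CX$. Hence $N(\phi_\mu) = \ind_{S(\mu)^c}\CX$, which is a band as the range of the band projection $X \mapsto \ind_{S(\mu)^c}X$ on the ideal $\CX \subset \Lzero$. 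The carrier $C(\phi_\mu) = N(\phi_\mu)^{\tn d}$ then equals $\ind_{S(\mu)}\CX$ thanks to the disjointness $\ind_{S(\mu)} \wedge \ind_{S(\mu)^c} = 0$ together with the observation that $X \in C(\phi_\mu)$ iff $|X|\ind_{S(\mu)^c} = 0$.

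For (3), the easy direction $sca(\CX) \subset \CX^\sim_n$ proceeds by reduction to a dominated case. For $\mu \in sca(\CX)_+$ and $X_\alpha \downarrow 0$ in $\CX$, the truncated net $X_\alpha\ind_{S(\mu)}$ also decreases to $0$ in $\CX$. Setting $\P^* := |\mu|/|\mu|(\Omega)$, the defining property of $S(\mu)$ together with $|\mu| \ll \CP$ shows that on $\ind_{S(\mu)}\CX$ the $\CP$-q.s.\ and $\P^*$-a.s.\ orders coincide. Consequently $X_\alpha\ind_{S(\mu)} \downarrow 0$ also in the super Dedekind complete space $L^0_{\P^*}$; extracting a decreasing sequence $X_{\alpha_n}\ind_{S(\mu)} \downarrow 0$ $\P^*$-a.s., dominated convergence with dominator $X_{\alpha_1} \in L^1(\P^*)$ gives $\phi_\mu(X_{\alpha_n}) \to 0$, and the monotonicity of $\phi_\mu$ along the net upgrades this to $\phi_\mu(X_\alpha) \downarrow 0$. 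Signed $\mu$ are handled by the Jordan decomposition.

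The converse direction $\CX^\sim_n \subset sca(\CX)$ is the principal obstacle. Given $\phi \in \CX^\sim_n$, part (1) supplies $\mu \in ca(\CX)$ with $\phi = \phi_\mu$; since $\CX^\sim_n$ is an ideal in $\CX^\sim$, also $\phi_{|\mu|} = |\phi_\mu| \in \CX^\sim_n$. Hence the null ideal $N(\phi_{|\mu|})$ is an order-closed ideal and therefore a band in $\CX$. The delicate step is to identify this band as $\ind_{S^c}\CX$ for some $S \in \CF$; once that is available, the support properties follow as in the proof of (2): $|\mu|(S^c) = 0$ because $\ind_{S^c} \in N(\phi_{|\mu|})$, and for $N \in \CF$ with $|\mu|(N \cap S) = 0$ the inclusion $\ind_{N \cap S} \in N(\phi_{|\mu|}) = \ind_{S^c}\CX$ forces $\ind_{N \cap S} = \ind_{N \cap S}\ind_{S^c} = 0$, making $N \cap S$ $\CP$-polar. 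The structural identification of the band, which amounts to exhausting the $\sigma$-ideal of $|\mu|$-null events modulo $\CP$-polar sets, is precisely the content of \cite[Theorem~1]{Luschgy} characterising supportedness of $|\mu|$ in terms of the order continuity of $\phi_{|\mu|}$, and I would invoke that result to close the argument.
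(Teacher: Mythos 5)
Your treatment of (1), (2), and the inclusion $sca(\CX)\subset\CX^\sim_c\cap\CX^\sim_n$ is correct and substantively the same as the paper's: for (1) the paper simply cites the Daniell--Stone Theorem where you carry out the extension from indicators by hand, and for the easy half of (3) the paper extracts the countable subfamily from the countable sup property of the carrier $C(\phi_\mu)=\ind_{S(\mu)}\CX$ (via \cite[Lemma 1.80]{AliBurk}) where you pass to $L^0_{\P^*}$ with $\P^*=|\mu|/|\mu|(\Omega)$ and use its super Dedekind completeness --- the same mechanism in different clothing; your intermediate claim that $\inf_\alpha X_\alpha\ind_{S(\mu)}=0$ persists in $L^0_{\P^*}$ does need the one-line verification that any $\P^*$-a.s.\ lower bound can be modified into a $\CP$-q.s.\ lower bound supported on $S(\mu)$. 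The genuine divergence is the converse inclusion $\CX^\sim_n\subset sca(\CX)$. The paper proves this from scratch: it extends $\psi=\phi_\mu\in(\CX^\sim_n)_+$ to the Dedekind completion $\CX^\delta$, splits $J(\ind_\Omega)=Y^*+Z^*$ along the band decomposition $\CX^\delta=N(\w\psi)\oplus C(\w\psi)$, exhausts $Z^*$ by an increasing sequence $J(\ind_{S_n})$ using order density and the countable sup property of the carrier, and checks that $S=\bigcup_n S_n$ is an order support. You instead reduce the problem to ``order continuity of $\phi_{|\mu|}$ forces $|\mu|$ to be supported'' and delegate it to \cite[Theorem 1]{Luschgy}. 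This is a legitimate strategy --- the paper's own remark after the proposition concedes that part (3) can be obtained from the literature, citing \cite[326O, Proposition]{Fremlin3} --- and it buys brevity at the cost of self-containedness. Two points should be made explicit to close your argument: first, the reduction from $\CX$ to $\Linfty$ (Luschgy works on the $L^\infty$-space of the experiment), which is harmless because $\Linfty$ is an ideal in $\CX$, so a net with infimum $0$ in $\Linfty$ also has infimum $0$ in $\CX$ and hence $\phi|_{\Linfty}\in(\Linfty)^\sim_n$; second, you should verify that the cited theorem really states the equivalence in the form you need --- elsewhere the paper invokes \cite[Theorem 1]{Luschgy} for the identification of the band generated by a majorised family with $\sca$ rather than for an order-continuity criterion, so the safer citation for your step is \cite[326O, Proposition]{Fremlin3}.
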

\begin{proof}For statement (1), $ca(\CX)\subset \CX^\sim_c$ is clear by dominated convergence. Conversely, $\sigma$-order continuity of $\phi\in \CX^\sim_c\cap \CX^\sim_+$ together with the Daniell-Stone Theorem~\cite[Theorem 7.8.1]{Bogachev} provides a unique finite measure $\mu$ on $\CF$ such that \[\phi(X)=\int X\,d\mu,\quad X\in\CX.\] $\mu\in ca(\CX)$ follows from $\CX^\sim_c\subset\CX^\sim = \CX^*$ where $\CX^\sim = \CX^*$ is due to \cite[Proposition~1.3.7]{MeyNie}. For general $\phi\in \CX^\sim_c$ we have that $\phi=\phi^+-\phi^-$ where $\phi^+,\phi^-\in \CX^\sim_c\cap \CX^\sim_+$. 

\smallskip

Concerning statement (2), one first shows that $|\phi_\mu|=\phi_{|\mu|}$ for all $\mu\in ca(\CX)$. 
As $C(\phi_\mu)=C(\phi_{|\mu|})$, $N(\phi_\mu)= N(\phi_{|\mu|})$, and $S(\mu)=S(|\mu|)$, we can thus assume $\mu\in sca(\CX)_+$ without loss of generality. For all $X\in\CX$, $\mu(S(\mu)^c)=0$ implies 
\[\phi_\mu(|X\ind_{S(\mu)^c}|)=\int |X|\ind_{S(\mu)^c}\,d\mu=0.\]
Conversely, if $X\in\CX$ satisfies $\phi_\mu(|X|)=0$, then
\begin{align*}
0&=\phi_\mu(|X|)\geq \phi_\mu(|X|\ind_{S(\mu)})=\int|X|\ind_{S(\mu)}d\mu\geq 0.
\end{align*}
This implies $|X|\ind_{S(\mu)}=0$ in $\CX$ by property (b) in Definition~\ref{def:support}(1), and $X=X\ind_{S(\mu)^c}$. At last, 
$C(\phi_\mu)=N(\phi_\mu)^{\tn d}=(\ind_{S(\mu)^c}\CX)^{\tn d}=\ind_{S(\mu)}\CX$.
Both are indeed bands. We will only prove this for $\ind_{S(\mu)}\CX$. If $(X_\alpha)_{\alpha\in I}\subset \ind_{S(\mu)}\CX$ is a net which converges in order to $X\in\CX$, we also have $X_\alpha\ind_{S(\mu)}\to X\ind_{S(\mu)}$ in order. However, $X_\alpha\ind_{S(\mu)}=X_\alpha$, and order limits are unique. Hence, $X\ind_{S(\mu)}=X$, which means precisely that $X\in\ind_{S(\mu)}\CX$.

\smallskip

At last, we turn to statement (3). $sca(\CX)$ is a lattice (Lemma~\ref{lem:Luschgy}) and $\CX^\sim_n$ is a band in $\CX^\sim=\CX^*$ (\cite[Proposition 1.3.9]{MeyNie}). Hence, it suffices to focus on positive elements.\\
Let $\mu\in sca(\CX)_+$. In order to see that $\phi_\mu\in\CX^\sim_n$, let $(X_\alpha)_{\alpha\in I}$ be a net such that $X_\alpha\downarrow 0$. Then $X_\alpha\ind_{S(\mu)}\downarrow 0$. By (2) and \cite[Lemma 1.80]{AliBurk}, the carrier $C(\phi_\mu)=\ind_{S(\mu)}\CX$ of the functional $\phi_\mu\in\CX^\sim_c$ has the countable sup property when equipped with the $\CP$-q.s.\ order $\peq$. Thus there is a countable subnet $(\alpha_n)_{n\in\N}$ such that
$X_{\alpha_n}\ind_{S(\mu)}\downarrow 0$ in order, in particular $\mu$-almost everywhere. By monotone convergence, 
\[0\leq \inf_{\alpha}\phi_\mu(X_\alpha)= \inf_{\alpha}\phi_\mu(X_\alpha \ind_{S(\mu)})\leq \inf_{n\in\N}\phi_\mu(X_{\alpha_n} \ind_{S(\mu)})=\inf_{n\in\N}\int X_{\alpha_n} \ind_{S(\mu)}\,d\mu=0,\]
whence order continuity of $\phi_\mu$ follows.\\
For the converse inclusion, pick $\psi\in(\CX^\sim_n)_+$ and let $\mu\in ca(\CX)_+$ be such that $\psi=\phi_\mu$ (see (1)). Let $(\CX^\delta,\tle)$ be the Dedekind completion of $(\CX,\peq)$, which exists because $\CX$ is Archimedean; see Appendix~\ref{sec:prelim}. Let $J\colon\CX\to\CX^\delta$ be a lattice isomorphism such that $J(\CX)$ is order dense and majorising in $\CX^\delta$. The latter gives rise to a lattice isomorphism $\w\cdot\colon\CX^\sim_n\to(\CX^\delta)^\sim_n$ given on positive functionals $\phi\in(\CX^\sim_n)_+$ by
\begin{equation}\label{eq:ext:2}\w\phi\colon \CX^\delta\to\R,\quad Y\mapsto\sup\{\phi(X)\mid X\in\CX,\, J(X)\tle Y\};\end{equation}
cf.\  \cite[Theorem 1.84]{AliBurk} and its proof. 
Both the null ideal $N(\w\psi)$ and the carrier $C(\w\psi)$ are bands in $\CX^\delta$. By Dedekind completeness of the latter space, $\CX^\delta=N(\w\psi)\oplus C(\w\psi)$. In particular, there is $Y^*\in N(\w\psi)$ and $Z^*\in C(\w\psi)$ such that $J(\ind_\Omega)=Y^*+Z^*$.  $J(\CX)$ is order dense in $\CX^\delta$, hence one can show 
\[Z^*=\sup\{J(X)\mid X\in \CX,\,J(X)\tle Z^*\}=\sup\{J(\ind_A)\mid A\in\CF,\,J(\ind_A)\tle Z^*\}.\]
$C(\w\psi)$ has the countable sup property, hence there is an increasing sequence $(S_n)_{n\in\N}\subset\CF$ such that $J(\ind_{S_n})\uparrow Z^*$ in $\CX^\delta$. We claim that $S:=\bigcup_{n\in\N}S_n$ is a version of the support of $\mu$. Indeed, 
\[\mu(\Omega)=\w\psi(J(\ind_\Omega))=\w\psi(Y^*)+\w\psi(Z^*)=\w\psi(Z^*)=\lim_{n\to\infty}\mu(S_n)=\mu(S).\]
Moreover, for $N\in\CF$ with $\mu(N)=0$ and $N\subset S$, $J(\ind_{N\cap S_n})\in N(\w\psi)\cap C(\w\psi)$, $n\in\N$, whence $J(\ind_{N\cap S_n})=0\in\CX^\delta$ follows. As $J$ is strictly positive, $\ind_{N\cap S_n}=0\in\CX$ has to hold, and we infer $\ind_N=\sup_{n\in\N}\ind_{N\cap S_n}=0$. Definition~\ref{def:support}(1) provides $S=S(\mu)$.
\end{proof}

\begin{remark}
Proposition~\ref{prop:(A1)-(A3)}(3) is proved in the spirit of \cite{AliBurk,AliBurk2}. Another proof, which would in our opinion be less accessible for readers of this paper, can be given using results in \cite{Fremlin3}, in particular \cite[326O, Proposition]{Fremlin3}.
\end{remark}

\begin{proposition}\label{prop}
Let $\emptyset\neq\CP\subset\Delta(\CF)$ and let $\Linfty\subset\CX\subset\Lzero$ be a Banach lattice such that $\CX$ is an ideal in $\Lzero$. 
\begin{itemize}
    \item[(1)]If $sca(\CX)$ separates the points of $\CX$, then $\CP$ of class \tn{(S)}.
    \item[(2)]The converse implication holds if the supported alternative $\CQ$ can be chosen such that $\CQ\subset sca(\CX)$. 
\end{itemize}
\end{proposition}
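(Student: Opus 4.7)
For part (1), the plan is to build a supported alternative $\CQ\approx\CP$ directly from $sca(\CX)$. First I would observe that $sca(\CX)$ is closed under taking moduli: either by noting that $sca(\CX)=\CX^\sim_n$ (Proposition \ref{prop:(A1)-(A3)}(3)) is automatically a vector lattice, or by invoking the identity $|\phi_\mu|=\phi_{|\mu|}$ from the proof of Proposition \ref{prop:(A1)-(A3)}(2) together with $\CX^\sim=\CX^*$. Setting
\[
\CQ:=\Bigl\{\tfrac{|\mu|}{|\mu|(\Omega)}\,\Bigm|\,\mu\in sca(\CX),\,\mu\neq 0\Bigr\},
\]
every element is automatically a supported probability measure, and $\CQ\ll\CP$ is immediate from $sca(\CX)\subset\ca$. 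For $\CP\ll\CQ$, I would pick $A\in\CF$ with $\mbf c(A)>0$, note that $\ind_A\neq 0$ in $\Linfty\subset\CX$, and use the separation hypothesis to produce $\mu\in sca(\CX)$ with $\mu(A)=\phi_\mu(\ind_A)\neq 0$; then $|\mu|(A)>0$, so the associated normalisation in $\CQ$ charges $A$. This yields $\CQ\approx\CP$ and hence the class \tn{(S)} property.

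For part (2), I would fix $X\in\CX\setminus\{0\}$ and construct $\nu\in sca(\CX)$ with $\phi_\nu(X)\neq 0$. Since $|X|\neq 0$, Lemma \ref{lem:supremum} applied to the given supported alternative $\CQ\subset sca(\CX)$ produces some $\Q\in\CQ$ with $|X|\ind_{S(\Q)}\neq 0$; this forces $\phi_\Q(|X|)>0$ and hence $\phi_\Q(X^+)+\phi_\Q(X^-)>0$, so at least one of $\phi_\Q(X^\pm)$ is strictly positive. Assuming without loss of generality $\phi_\Q(X^+)>0$, I would choose a representative $f\in X$ and set $\nu(A):=\Q(A\cap\{f>0\})$; this is well defined on $\CF$ up to $\CP$-polar, hence $\Q$-null, perturbations of $\{f>0\}$. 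Supportedness of $\nu$ follows from $0\le\nu\le\Q$ together with supportedness of $\Q$, while continuity on $\CX$ follows from $|\phi_\nu(Y)|\le\int|Y|\,d\Q=\phi_\Q(|Y|)\le\|\phi_\Q\|_{\CX^*}\|Y\|_\CX$ (using the Banach lattice structure). Finally, $\phi_\nu(X)=\phi_\Q(X^+)>0$ separates $X$ from $0$.

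The main obstacle is the step just outlined in part (2), namely the transition from $\phi_\Q(|X|)>0$ to a functional actually distinguishing $X$ from $0$: a priori $\phi_\Q(|X|)$ can be strictly positive while $\phi_\Q(X)$ vanishes, when the positive and negative parts balance. The localisation $\nu=\Q(\cdot\cap\{f>0\})$ resolves this, and its success hinges on the conjunction of three features---$\CX$ being an ideal in $\Lzero$ (so that the restricted integrand stays in $\CX$), the Banach lattice structure of $\CX$ (giving continuity of the restricted functional via the $\Q$-bound above), and supportedness being inherited by positive measures dominated by a supported one. With this technical point addressed, both implications follow from the constructions described above.
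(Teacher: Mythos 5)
Your proposal is correct and part (1) follows essentially the same route as the paper: the separation hypothesis applied to $\ind_A$ produces $\mu\in sca(\CX)$ with $\mu(A)\neq 0$, hence $\mu^+(A)>0$ (the paper) or $|\mu|(A)>0$ (your variant), and the normalised positive part of $sca(\CX)$ is the supported alternative. For part (2) the paper merely declares the proof ``straightforward''; your localisation $\nu:=\Q(\,\cdot\,\cap\{f>0\})$ is a correct and complete way to pass from $\phi_\Q(|X|)>0$ to a functional in $sca(\CX)$ not vanishing at $X$ (it is the concrete form of the Riesz--Kantorovich component argument, with supportedness of $\nu$ guaranteed because $\sca$ is an ideal containing $\Q$), so it validly fills the gap the paper leaves to the reader.
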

\begin{proof}For (1), suppose $A\in\CF$ satisfies $\mbf c(A)>0$. 
Then there is $\mu\in sca(\CX)$ such that $\mu(A)=\phi_\mu(\ind_A)>0$. 
This entails that also $\mu^+(A)>0$. The set $\CQ:=\{\mu(\Omega)^{-1}\mu\mid \mu\in sca(\CX)_+\}$ therefore serves as a supported alternative to $\CP$. The proof of statement (2) is straightforward.
\end{proof}

\medskip

\section{Enlargements of $\sigma$-algebras}\label{sec:enlargements}

This paper has uncovered the necessity of  Dedekind completeness for the validity of important results from mathematical finance in the presence of uncertainty. On another note, many pertinent contributions---see the references below---weaken the notion of measurability \textit{a posteriori} by enlarging the underlying $\sigma$-algebra $\CF$ on $\Omega$. This additional appendix therefore asks under which circumstances this produces the Dedekind completion of $\Linfty$ (by additionally changing to a new set of probability measures on the larger $\sigma$-algebra). Particular emphasis will be put on the so-called universal completion.

\begin{definition}\label{def:enlargement}
Given a measurable space $(\Omega,\CF)$ and a nonempty $\CP\subset\Delta(\CF)$, an \emph{enlargement} is a tuple $(\CG,\w\CP)$, where $\CG\supset\CF$ is a $\sigma$-algebra on $\Omega$, and $\w\CP\subset\Delta(\CG)$ is such that 
\[\w\CP|_\CF:=\{\w\P|_{\CF}\mid\w\P\in\w\CP\}\approx\CP.\]
Denoting by $\mbf{\w c}$ the upper probability associated with $\w\CP$ and by $\la f\ra$ the equivalence class generated by an $\CF$-measurable random variable $f$ in $L^\infty_{\mbf{\w c}}$, we introduce the lattice homomorphism
\begin{equation}\label{eq:iota}J_\CG\colon\begin{array}{l}\Linfty\to L^\infty_{\mbf{\w c}},\\
X=[f]\mapsto\la f\ra.\end{array}\end{equation}
An enlargement $(\CG,\w\CP)$ \emph{completes} $\Linfty$ if $L^\infty_{\mbf{\w c}}$ is the Dedekind completion of $\Linfty$ and $J_\CG(\Linfty)$ is order dense and majorising in $L^\infty_{\mbf{\w c}}$. 
\end{definition}

The condition $\w\CP|_\CF\approx\CP$ ensures that $J_\CG$ is well defined and one-to-one, which is necessary if $L^\infty_{\mbf{\w c}}$ completes $\Linfty$. 
We shall introduce two relevant candidates for completing enlargements based on completing the original $\sigma$-algebra; cf.~\eqref{eq:defenlarge}.
The \textit{universal enlargement} $(\CH,\CP^\CH)$ is defined by $\CH:=\CF(\Delta(\CF))$, the universal completion of $\CF$, and the set $\CP^\CH:=\{\P^\CH\mid\P\in\CP\}$ of unique extensions of the initial probability measures $\P\in\CP$ to $(\Omega,\CH)$. The resulting upper probability on $\CH$ is denoted by $\mbf c^\CH$. Each $\mu\in ca$ has a unique extension to $\CH$ which we will denote by $\mu^\CH$. The $\CP^\CH$-q.s.\ order on $L^\infty_{\mbf c^\CH}$ is denoted by $\peq^\CH$.
The universal completion of the Borel $\sigma$-algebra plays an important role in discrete-time financial modeling under uncertainty as it admits the application of measurable selection arguments in the iterative construction of superhedging strategies \cite{BN} or for the optimal solution of the Merton problem \cite{Ba19}. Similarly in continuous time universal completions guarantee the concatenation property over the path space (e.g. \cite{BBKN14,NV}). Moreover, it is sometimes convenient to employ \textit{medial limits} (\cite{BCK19,Nutz}).

If $\CP$ is of class (S), we shall consider the \emph{supported enlargement} $(\CS,\CQ^\sharp)$, where $\CS:=\CF( \sca_+)$ is the completion along all supported measures, and $\CQ^\sharp:=\{\QW^\sharp\mid\QW\in\CQ\}$ is the set of extensions of a supported alternative $\CQ\approx\CP$ to $\CS$. The resulting upper probability on $\CS$ is denoted by $\mbf c^\sharp$, the $\CQ^\sharp$-q.s.\ order on the space $L^\infty_{\mbf c^\sharp}$ by $\peq^\sharp$. Note that $L^\infty_{\mbf c^\sharp}$ does not depend on the particular choice of the supported alternative $\CQ$, but any choice produces the same space. 

Completions along a particular set of probability measures are crucial in \cite[Section 7]{STZ} and throughout \cite{Cohen} to construct a conditional version of sublinear expectations.
They are adopted for the pathwise construction of the stochastic integral in \cite{NutzPathwise} and the resolution of the Holmstr\"om \& Milgrom contracting problem under uncertainty \cite{MP18}.
In \cite{BFM16} the underlying $\sigma$-algebra is completed along the set $\CP$ of all martingale measures for the underlying price process in order to obtain the measurability of the so-called ``arbitrage aggregator''.
We also refer to \cite{B+al16}, where the supported alternative of finitely supported martingale measures replaces the entire class of martingale measures considered in \cite{BFM16}.
In the statistical literature, this procedure is called \emph{completion of an experiment}; cf.\ \cite{Sufficiency,LeCam,Luschgy}.
We particularly highlight the discussion in \cite[Section 6]{Luschgy}.

\textit{A priori} it is clear that $\CH\subset\CS$ whenever $\CP$ is of class (S). 

\begin{lemma}\label{lem:propsuniversal}
Consider the enlargement $(\CH,\CP^\CH)$. For a signed measure $\mu\in ca$, $\mu\in{ca_{\mbf c}}$ is equivalent to $\mu^\CH\in ca_{\mbf c^\CH}$, and $\mu\in\sca$ is equivalent to $\mu^\CH\in sca_{\mbf c^\CH}$. In the latter case, $S(\mu)=S(\mu^\CH)$ holds.\end{lemma}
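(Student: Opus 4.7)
The plan proceeds in three stages. First, I would collect some uniform facts about the universal completion: since $\CH = \bigcap_\rho \CF(\rho)$ is contained in $\CF(|\mu|/|\mu|(\Omega))$ whenever $\mu \neq 0$, standard extension theory yields $|\mu^\CH| = |\mu|^\CH$ (the Hahn decomposition of $\mu$ lives in $\CF$ and carries unchanged to $\CH$), the inner-regularity identities
\[
|\mu|^\CH(N) = \sup\{|\mu|(A) \mid A \in \CF,\, A \subset N\} \quad \text{and} \quad \P^\CH(N) = \sup\{\P(A) \mid A \in \CF,\, A \subset N\}
\]
for all $N \in \CH$ and $\P \in \CP$, and the restriction identity $\mbf c^\CH|_\CF = \mbf c$.

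Given these tools, the first equivalence reduces to a short verification. For the ``only if'' direction, if $\mu \in \ca$ and $N \in \CH$ is $\mbf c^\CH$-polar, then every $A \in \CF$ with $A \subset N$ is $\mbf c$-polar, hence $|\mu|$-null by $\mu \ll \CP$; the supremum in the inner-regularity identity then forces $|\mu^\CH|(N) = 0$. The converse is immediate since $\mbf c$-polarity of $N \in \CF$ entails $\mbf c^\CH$-polarity, giving $|\mu|(N) = |\mu^\CH|(N) = 0$.

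For the second equivalence I would verify Definition~\ref{def:support} directly in both directions and simultaneously identify the supports, assuming without loss of generality that $\mu \ge 0$. The forward implication uses $S(\mu) \in \CF$ as the candidate support of $\mu^\CH$: if $N \in \CH$ satisfies $\mu^\CH(N \cap S(\mu)) = 0$, then inner regularity plus supportedness of $\mu$ shows each $\CF$-approximant $A \subset N \cap S(\mu)$ is $\mbf c$-polar, hence $\mbf c^\CH(N \cap S(\mu)) = 0$ by the inner-regularity formula for each $\P^\CH$. For the reverse implication, given $\mu^\CH$ supported at some $T' \in \CH$, I would pick $T \in \CF$ with $T \subset T'$ and $\mu^\CH(T' \setminus T) = 0$ via inner regularity; since $T' \setminus T \subset T'$, supportedness of $\mu^\CH$ upgrades this to $\mbf c^\CH(T' \setminus T) = 0$. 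Then for any $N \in \CF$ with $\mu(N \cap T) = 0$, the decomposition $N \cap T' = (N \cap T) \cup (N \cap (T' \setminus T))$ forces $\mu^\CH(N \cap T') = 0$, hence $\mbf c^\CH(N \cap T') = 0$ by supportedness of $\mu^\CH$, and finally $\mbf c(N \cap T) \le \mbf c^\CH(N \cap T') = 0$. This certifies $T$ as a version of $S(\mu)$, and combining the two directions identifies $S(\mu) = S(\mu^\CH) = T$ modulo the respective polar sets.

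The main obstacle is the reverse direction of the second equivalence: the support $T' \in \CH$ is only specified up to $\CP^\CH$-polar events, so one must engineer an $\CF$-approximant $T \subset T'$ whose defect $T' \setminus T$ is $\mbf c^\CH$-polar rather than merely $\mu^\CH$-negligible. Choosing $T$ as an \emph{inner}, rather than outer, approximation is the crucial lever: it lodges $T' \setminus T$ inside $T'$, enabling the supportedness of $\mu^\CH$ to promote $\mu^\CH$-negligibility into genuine $\mbf c^\CH$-polarity.
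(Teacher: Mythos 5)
Your proposal is correct and follows essentially the same route as the paper: both arguments hinge on the fact that $\CH\subset\CF(\{\rho\})$ for every finite measure $\rho$, which yields inner $\CF$-approximants of $\CH$-sets with negligible defect, and both use $S(\mu)$ itself as the candidate support of $\mu^\CH$ in the forward direction and an inner $\CF$-approximant of $S(\mu^\CH)$ in the reverse direction (a step the paper declares ``straightforward to verify'' and you carry out explicitly). The only cosmetic difference is that you package the approximation as a supremum identity uniform over $\P\in\CP$, whereas the paper selects one approximant per measure.
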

\begin{proof}
Let $\mu\in ca_+$ and suppose $N\in\CH$ satisfies $\mbf c^\CH(N)=0$. As $\CH\subset\CF(\{\mu\})$, we can choose $A\in\CF$, $A\subset N$, such that $N\backslash A\in\mf n(\mu)$. This entails $\mbf c(A)=\mbf c^\CH(A)=0$. Hence, we see that $\mu\ll\CP$ implies $\mu^\CH\ll\CP^\CH$.
The converse implication is clear.\\
Now suppose that $\mu\in\sca_+$ and let $S(\mu)\in\CF$ be its order support. Suppose $N\in\CH$ satisfies $N\subset S(\mu)$ and $\mu^\CH(N)=0$. Let $\P\in\CP$. As $\CH\subset\CF(\{\P\})$, there is $A\in\CF$, $A\subset N$, such that $N\backslash A\in\mf n(\P)$. From $\mu(A)=0$, we infer that $\mbf c(A)=0$ and that $\P^\CH(N)=\P(A)=0$. In conclusion, $S(\mu)$ is also the $\CP^\CH$-q.s.\ order support of $\mu^\CH$.\\
Conversely, note first that each finite measure $\nu$ on $(\Omega,\CH)$ satisfies $\nu=(\nu|_\CF)^\CH$. Hence, if $S(\mu^\CH)\in\CH$ is the $\CP^\CH$-q.s.\ order support of $\mu^\CH\in{sca_{\mbf c^\CH}}_+$, and $S\in\CF$ satisfies $S\subset S(\mu^\CH)$ and $S(\mu^\CH)\backslash S\in\mf n(\mu)$, it is straightforward to verify that $S$ is a version of the $\CP$-q.s.\ order support of $\mu$. 
\end{proof}

\begin{lemma}\label{prop:augmentation}
Suppose $\CP$ is of class \tn{(S)} and $\CQ$ is a supported alternative to $\CP$. Then $\CS=\CF(\CQ)$ and $J_\CS(\Linfty)$ is order dense and majorising in $L^\infty_{\mbf c^\sharp}$.
\end{lemma}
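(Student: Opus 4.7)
The statement asks for two things: the set-theoretic identity $\CS = \CF(\CQ)$, and that $J_\CS$ embeds $\Linfty$ order-densely and in a majorising fashion into $L^\infty_{\mbf c^\sharp}$. I would approach the $\sigma$-algebra identity via two opposite inclusions. The easy direction $\CS \subset \CF(\CQ)$ is immediate from $\CQ \subset \sca_+$ (each $\Q \in \CQ$ is supported and satisfies $\Q \ll \CP$ by $\CQ \approx \CP$) together with the antimonotonicity of $\CF(\cdot)$ in its indexing family: intersecting over a larger set yields a smaller $\sigma$-algebra.

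The serious step is the reverse inclusion $\CF(\CQ) \subset \CS$. I would fix $A \in \CF(\CQ)$ and an arbitrary $\mu \in \sca_+$, and produce $A^{\ast} \in \CF$ with $A \triangle A^{\ast}$ contained in an $\CF$-measurable $\mu$-null set. The plan is to first distill a countable subfamily $\CQ_\mu \subset \CQ$ whose supports cover $\mu$. This rests on two ingredients already in the paper: Lemma~\ref{lem:supremum} gives $\sup_{\Q \in \CQ}\bigl(\ind_{S(\Q)} \wedge \ind_{S(\mu)}\bigr) = \ind_{S(\mu)}$ in $\Linfty$, and Proposition~\ref{prop:(A1)-(A3)}(2)--(3) identifies the carrier of $\E_\mu[\cdot]$ with $\ind_{S(\mu)}\Linfty$, which enjoys the countable sup property by \cite[Lemma 1.80]{AliBurk}. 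Setting $\Omega_\mu := \bigcup_{\Q \in \CQ_\mu} S(\Q) \in \CF$, we obtain $\mu(\Omega \setminus \Omega_\mu) = 0$. For each $\Q \in \CQ_\mu$, the membership $A \in \CF(\CQ)$ furnishes $A_\Q, M_\Q \in \CF$ with $\Q(M_\Q) = 0$ and $A \triangle A_\Q \subset M_\Q$. I would then set $A^{\ast} := \bigcup_{\Q \in \CQ_\mu}\bigl(A_\Q \cap S(\Q)\bigr) \in \CF$ and verify, using Definition~\ref{def:support}(1)(b) to conclude that each $M_\Q \cap S(\Q)$ is $\CP$-polar and hence $\mu$-null, that $A \triangle A^{\ast}$ is contained in the $\CF$-measurable $\mu$-null set $(\Omega \setminus \Omega_\mu) \cup \bigcup_{\Q \in \CQ_\mu}(M_\Q \cap S(\Q))$.

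For the second assertion, majorisation is immediate: any $Y \in L^\infty_{\mbf c^\sharp}$ satisfies $|Y| \peq^\sharp m\ind_\Omega = J_\CS(m\ind_\Omega)$ for some $m > 0$. For order denseness, given $Y \succ^\sharp 0$ with $\CS$-measurable representative $g \ge 0$, I would locate $\Q_0 \in \CQ$ and $n \in \N$ with $\Q_0^\sharp(B) > 0$ for $B := \{g \ge 1/n\}$, and complete along $\Q_0$ to obtain $A_0 \in \CF$ with $A_0 \triangle B \in \mf n(\Q_0)$. My candidate is $X := \tfrac{1}{n}\ind_A$ with $A := A_0 \cap S(\Q_0) \in \CF$: positivity of $J_\CS(X)$ in $L^\infty_{\mbf c^\sharp}$ follows from $\Q_0(A) = \Q_0(A_0) > 0$, and the inequality $J_\CS(X) \peq^\sharp Y$ reduces to $\mbf c^\sharp(A \setminus B) = 0$. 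For $\Q' \in \CQ$ distinct from $\Q_0$, the disjointness property of Lemma~\ref{lem:disjoint:sup:alt} implies $\Q'(S(\Q_0)) = 0$ and hence $\Q'^\sharp(A) = 0$; for $\Q_0$ itself, $A \setminus B \subset A_0 \triangle B$ is dominated by a $\Q_0$-null $\CF$-set.

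The main obstacle I anticipate is the reverse inclusion in part 1: the possibly uncountable $\CQ$ prevents direct gluing of the individual $\CF$-approximations of $A$, so one must first pass to a countable cover of $S(\mu)$. The order-theoretic countable sup property of the carrier of $\E_\mu[\cdot]$ is the pivotal device that enables this descent; without it, patching the $A_\Q \cap S(\Q)$ together would yield only a $\mu$-outer-measure zero discrepancy, rather than one contained in a genuine $\CF$-measurable $\mu$-null set, which is precisely what is required to place $A$ in $\sigma(\CF \cup \mf n(\mu))$.
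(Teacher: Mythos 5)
Your argument for the identity $\CS=\CF(\CQ)$ is correct but takes a genuinely different route from the paper. The paper disposes of this in two lines by citing a characterisation of $\CF(\CQ)$ in terms of the majorising measure $\nu=\sum_{\Q\in\CR}\Q$ (Luschgy--Mussmann, Lemma~10) and observing that the characterisation does not depend on the choice of supported alternative. You instead prove the nontrivial inclusion $\CF(\CQ)\subset\CS$ by hand: you extract a countable $\CQ_\mu\subset\CQ$ whose supports cover $S(\mu)$ up to a $\CP$-polar set (via Lemma~\ref{lem:supremum} and the countable sup property of the carrier $\ind_{S(\mu)}\Linfty$, exactly the device used in the paper's proof of Proposition~\ref{prop:(A1)-(A3)}(3)), and then patch the $\CF$-approximations $A_\Q$ along the supports. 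I checked the patching: $A\triangle A^\ast$ is indeed contained in $(\Omega\setminus\Omega_\mu)\cup\bigcup_{\Q\in\CQ_\mu}(M_\Q\cap S(\Q))$, which is an $\CF$-measurable $\mu$-null set since each $M_\Q\cap S(\Q)$ is $\CP$-polar by Definition~\ref{def:support}(1)(b). Your version is longer but self-contained, which is a reasonable trade-off.

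There is, however, a genuine flaw in your order-density argument. You claim that for $\Q'\in\CQ$ with $\Q'\neq\Q_0$ one has $\Q'(S(\Q_0))=0$ by ``the disjointness property of Lemma~\ref{lem:disjoint:sup:alt}''. But the $\CQ$ in the statement of the lemma is an \emph{arbitrary} supported alternative; Lemma~\ref{lem:disjoint:sup:alt} only asserts that \emph{some} disjoint supported alternative exists. For a non-disjoint $\CQ$ (e.g.\ one containing both $\Q_0$ and a mixture $\tfrac12(\Q_0+\Q_2)$ with $\Q_2\perp\Q_0$) the claim $\Q'(S(\Q_0))=0$ fails, and then $\mbf c^\sharp(A\setminus B)=0$ is not established, since $A\setminus B$ is only known to be $\Q_0$-negligible rather than $\CQ$-polar. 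The repair is exactly what the paper does: choose the $\CF$-approximant \emph{inside} $B$, i.e.\ take $A:=(A_0\setminus N)\cap S(\Q_0)$ where $N\in\CF$ is a $\Q_0$-null set containing $A_0\triangle B$. Then $A\subset B$ pointwise, so $\ind_A\peq^\sharp\ind_B$ trivially, while $\Q_0(A)=\Q_0(A_0)=\Q_0^\sharp(B)>0$ still gives $J_\CS(\tfrac1n\ind_A)\succ^\sharp 0$. (Alternatively you could first replace $\CQ$ by a disjoint supported alternative, but then you owe an argument that $\mbf c^\sharp$ and $\peq^\sharp$ are unchanged.) The majorisation part and the reduction to indicators are fine and agree with the paper.
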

\begin{proof}
For the identity $\CS=\CF(\CQ)$, let $\CQ,\CR\approx\CP$ be supported alternatives to $\CP$ and assume that $\CR$ is disjoint. Then the experiment $(\Omega,\CF,\CQ)$ is majorised by the measure $\nu:=\sum_{\Q\in\CR}\Q$ which additionally satisfies $\nu\approx\CP$. By \cite[Lemma 10]{Luschgy}, 
\[\CF(\CQ)=\big\{A\subset\Omega\mid \forall\,F\subset\Omega:\,\nu(F)<\infty\, \Rightarrow~A\cap F\in\CF(\{\nu\})\big\}.\]
As the choice of $\CQ$ and $\CR$ was arbitrary, this shows that $\CS=\CF(\CQ)$. 

For the second assertion, $J_\CS(\Linfty)$ is majorising because the space contains equivalence classes of constant random variables. For its order density, it suffices to consider $B\in\CS$ with $\ind_B\neq 0$ in $L^\infty_{\mbf c^\sharp}$. Fix $\Q_*\in\CQ$ such that $\Q_*^\sharp(B)>0$. As $\CS\subset\CF(\{\Q_*\})$, there is $A\in\CF$ such that $A\subset B\cap S(\Q_*)$ and $\Q_*(A)=\Q_*^\sharp(B)>0$. 
Hence, $0\prec^\sharp \ind_A\peq^\sharp \ind_B$.
\end{proof}

The last necessary technical lemma is due to \cite[p.\ 45]{Fell}. {\bf CH} denotes the continuum hypothesis: there is no set $\mf X$ whose cardinality satisfies $|\N|=\aleph_0<|\mf X|<2^{\aleph_0}=|\R|$. 

\begin{lemma}\label{rem:continuum}
In {\bf ZFC+CH} let $\CP\subset\Delta(\CF)$ be of class (S) and suppose that there is a disjoint supported alternative to $\CP$ of cardinality $0<\kappa\le 2^{\aleph_0}$. Then $\ca^*=\Linfty$ if and only if $\CP$ is  Dedekind complete. In that case, we also have $\ca=\sca$.
\end{lemma}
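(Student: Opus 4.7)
The plan is to reduce the biconditional to Lemma~\ref{lem:main1} once I have ruled out Banach's measure problem on $\CQ$. Fix a disjoint supported alternative $\CQ\approx\CP$ of cardinality $\kappa\le 2^{\aleph_0}$. Under \textbf{CH} one has $2^{\aleph_0}=\aleph_1$, a successor cardinal, hence not weakly inaccessible. By the classical Ulam-Banach theorem, cited exactly as in the proof of Corollary~\ref{cor:example} via \cite[Corollaries 10.7 \& 10.15]{Jech}, every cardinal admitting a diffuse probability measure on its power set must be weakly inaccessible; so $\kappa\le\aleph_1$ forces $\CQ$ not to admit a solution to Banach's measure problem.

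With this in hand I would invoke Lemma~\ref{lem:main1}. Its condition~(2) is the conjunction of three items: $\CP$ of class~\tn{(S)}, $\Linfty$ Dedekind complete, and no disjoint supported alternative admits a solution to Banach's problem. The first holds by hypothesis, and the third is what the previous paragraph delivers for the specific $\CQ$ at hand. The underlying Luschgy characterization on which Lemma~\ref{lem:main1} rests proceeds through a single, arbitrarily chosen disjoint supported alternative, so verifying the Banach-problem condition on $\CQ$ alone already suffices. What remains of Lemma~\ref{lem:main1} is then precisely the announced biconditional $\ca^*=\Linfty\iff\Linfty$ is Dedekind complete.

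For the final clause, suppose this common condition holds. Proposition~\ref{prop:(A1)-(A3)} gives $\sca=(\Linfty)^\sim_n\subset(\Linfty)^\sim_c=\ca$ in any case. The absence of a Banach-measure solution on $\CQ$ is exactly the condition, within the Luschgy dichotomy underlying Lemma~\ref{lem:main1}, which upgrades sequential order continuity to full order continuity of functionals on $\Linfty$; in other words $(\Linfty)^\sim_c=(\Linfty)^\sim_n$, which reads $\ca=\sca$. Alternatively one may observe that Theorem~\ref{thm:perfect} yields $\sca^*=\Linfty$, so that $\sca$ and $\ca$ are two Banach-lattice preduals of $\Linfty$; comparing them at the level of order continuous versus sequentially order continuous functionals forces their equality.

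The step I anticipate as the main obstacle is the transfer in the second paragraph from ``our specific $\CQ$ does not admit a solution to Banach's problem'' to the uniform condition appearing in Lemma~\ref{lem:main1}(2). Resolving it requires either inspecting \cite[Theorem 3]{Luschgy} to confirm that the characterization uses only a single disjoint supported alternative, or invoking a Maharam-type invariance of the cardinality of maximal disjoint systems in the Dedekind complete lattice $\Linfty$ to propagate the cardinality bound $\kappa\le\aleph_1$ from $\CQ$ to every disjoint supported alternative. Beyond this, the rest of the argument is bookkeeping between Lemma~\ref{lem:main1}, Proposition~\ref{prop:(A1)-(A3)} and Theorem~\ref{thm:perfect}.
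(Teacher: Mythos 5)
The paper never proves this lemma---it is simply attributed to \cite[p.~45]{Fell}---so your derivation from Lemma~\ref{lem:main1} is a legitimate alternative route built on material the paper has already assembled, and your first step is sound: under \textbf{CH} one has $2^{\aleph_0}=\aleph_1$, a successor, so the least cardinal admitting a solution to Banach's measure problem (which is weakly inaccessible by \cite[Corollaries 10.7 \& 10.15]{Jech}) exceeds $\aleph_1$, and no set of cardinality $\le 2^{\aleph_0}$ admits a solution. What you leave open, and flag yourself, is the quantifier mismatch with Lemma~\ref{lem:main1}(2), which demands that \emph{no} disjoint supported alternative admit a solution. This step does need an argument, but it is available and short: if $\CQ'$ is any disjoint supported alternative, then each $\ind_{S(\Q')}\neq 0$ is charged by some $\Q\in\CQ$ (because $\CQ\approx\CP$), while each fixed $\Q\in\CQ$ can charge at most countably many of the pairwise $\CP$-q.s.\ disjoint sets $S(\Q')$; hence $|\CQ'|\le|\CQ|\cdot\aleph_0\le 2^{\aleph_0}$, and the first observation applies to $\CQ'$ as well. (In particular the cardinality of an infinite disjoint supported alternative is indeed an invariant, by symmetry of this bound, so your second proposed fix is the right one; you should not rely on the unverified guess about the internal structure of \cite[Theorem~3]{Luschgy}.)

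The weaker part of the proposal is the final clause $\ca=\sca$. The ``alternative'' argument does not work: $\sca$ and $\ca$ being two preduals of $\Linfty$ forces nothing---a Banach lattice can have genuinely different preduals, and no comparison ``at the level of order continuous versus sequentially order continuous functionals'' is actually carried out. The appeal to ``the Luschgy dichotomy'' is correct in spirit but is exactly the point that needs proof, namely the \emph{local} analogue of \cite[363S]{Fremlin3}. A self-contained argument runs as follows: $\sca$ is a band in the Dedekind complete lattice $\ca$ (Lemma~\ref{lem:Luschgy}), so if $\ca\neq\sca$ there is $0\neq\mu\in\sca^{\tn d}\cap\ca_+$. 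From $\mu\wedge\Q=0$ and the support property one gets $\mu(S(\Q))=0$ for every $\Q\in\CQ$. For $T\subset\CQ$ let $\ind_{B_T}:=\sup_{\Q\in T}\ind_{S(\Q)}$, which exists by Dedekind completeness and is an indicator by Lemma~\ref{lem:sets}; then $T\mapsto\mu(B_T)/\mu(\Omega)$ is well defined (as $\mu\ll\CP$), countably additive, diffuse, and has total mass $1$ by Lemma~\ref{lem:supremum}. This is a solution to Banach's measure problem on $\CQ$, contradicting the first paragraph. With the transfer step and this construction supplied, your proposal becomes a complete proof; as written, both are gaps.
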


We now exclude Dedekind completeness of the universal enlargement in many situations. 
The result is in the spirit of \cite{Rao} and applies, for instance, in the situation of Example~\ref{ex:big}.

\begin{corollary}\label{cor:Polish}
In {\bf ZFC+CH} assume  that $\CP$ is of class \tn{(S)}. 
\begin{itemize}
\item[\tn{(1)}]If there is a disjoint supported alternative of cardinality $0<\kappa\le 2^{\aleph_0}$ and if ${ca_{\mbf c}}\backslash\sca$ is nonempty, then $L^\infty_{\mbf c^\CH}$ is not Dedekind complete.
\item[\tn{(2)}]If $\Omega$ is Polish, $\CF$ is the Borel-$\sigma$-algebra on $\Omega$, $\CP$ is a set of Borel probability measures, and a disjoint supported alternative $\CQ$ contains a perfect or uncountable analytic or Borel subset $\CR$ of $\Delta(\CF)$, then $L^\infty_{\mbf c^\CH}$ is not Dedekind complete. 
\end{itemize}
\end{corollary}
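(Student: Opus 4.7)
The plan is to reduce (2) to (1), and then prove (1) by applying Lemma~\ref{rem:continuum} to the universal enlargement. The key observation is that the hypotheses of (1) lift cleanly from $(\Omega,\CF,\CP)$ to $(\Omega,\CH,\CP^\CH)$ via Lemma~\ref{lem:propsuniversal}. I expect the main obstacle to be bookkeeping around this lift, in particular verifying that the disjoint supported alternative and the ``bad'' measure $\mu \in {ca_{\mbf c}}\setminus\sca$ transfer correctly to the enlarged setting.

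For (1), I argue by contradiction: assume $L^\infty_{\mbf c^\CH}$ is Dedekind complete. Starting from a disjoint supported alternative $\CQ\approx\CP$ of cardinality $0<\kappa\le 2^{\aleph_0}$, I form $\CQ^\CH:=\{\Q^\CH\mid\Q\in\CQ\}\subset\Delta(\CH)$. Lemma~\ref{lem:propsuniversal} guarantees $\Q^\CH\in sca_{\mbf c^\CH}$ with $S(\Q^\CH)=S(\Q)$, so the disjointness of the supports persists in $L^\infty_{\mbf c^\CH}$ (using that an event $A\in\CF$ is $\CP$-polar iff it is $\CP^\CH$-polar, hence $\mbf c(S(\Q)\cap S(\Q'))=0$ forces $\mbf c^\CH(S(\Q)\cap S(\Q'))=0$). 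Since $\CQ^\CH\approx\CP^\CH$, the enlarged experiment is again of class (S) and admits $\CQ^\CH$ as a disjoint supported alternative of cardinality $\kappa\le 2^{\aleph_0}$. Lemma~\ref{rem:continuum} then yields $ca_{\mbf c^\CH}=sca_{\mbf c^\CH}$. Now pick any $\mu\in{ca_{\mbf c}}\setminus\sca$, which exists by assumption; its extension $\mu^\CH$ lies in $ca_{\mbf c^\CH}=sca_{\mbf c^\CH}$, so by the second half of Lemma~\ref{lem:propsuniversal} $\mu$ itself must be supported, a contradiction.

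For (2), I verify that the hypotheses of (1) are met. Since $\Omega$ is Polish, $\Delta(\CF)$ is Polish (\cite[Theorem~15.15]{Ali}) and thus has cardinality at most $2^{\aleph_0}$; in particular any disjoint supported alternative $\CQ\subset\Delta(\CF)$ satisfies $|\CQ|\le 2^{\aleph_0}$. The subset $\CR\subset\CQ\subset\Delta(\CF)\cap\sca$ inherits the disjointness property $\ind_{S(\Q)}\wedge\ind_{S(\Q')}=0$ for $\Q\ne\Q'$ in $\CR$ (since disjointness was built into the choice of $\CQ$). Proposition~\ref{prop:Polish} therefore applies and yields a probability measure $\mu\in{ca_{\mbf c}}\setminus\sca$, so that ${ca_{\mbf c}}\setminus\sca$ is nonempty. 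All hypotheses of~(1) are thus verified, and the conclusion follows. The only delicate point is checking that the perfectness/Borel/analytic condition on $\CR$ is genuinely used only through Proposition~\ref{prop:Polish} to produce the non-supported measure, while the cardinality constraint on the full $\CQ$ comes for free from the Polish structure of $\Delta(\CF)$.
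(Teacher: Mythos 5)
Your proposal is correct and follows essentially the same route as the paper: part (1) is obtained by lifting the disjoint supported alternative and the non-supported measure to the universal enlargement via Lemma~\ref{lem:propsuniversal} and then invoking Lemma~\ref{rem:continuum} (the paper states this as a direct contrapositive rather than a contradiction, but the content is identical), and part (2) reduces to (1) exactly as you do, using the Polishness of $\Delta(\CF)$ for the cardinality bound and Proposition~\ref{prop:Polish} to produce the measure in ${ca_{\mbf c}}\setminus\sca$.
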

\begin{proof}For (1), $\CP$ being of class (S) implies that $\CP^\CH$ is of class (S). Moreover, $\CP^\CH$ also admits a disjoint supported alternative of cardinality $\kappa\le 2^\aleph_0$ (Lemma~\ref{lem:propsuniversal}). Again by Lemma~\ref{lem:propsuniversal}, ${ca_{\mbf c}}\backslash\sca \neq\emptyset$ implies $ca_{\mbf c^\CH}\backslash sca_{\mbf c^\CH}\neq\emptyset$.
By Lemma~\ref{rem:continuum}, $L^\infty_{\mbf c^\CH}$ cannot be Dedekind complete. 

As for (2), we first observe that $\Delta(\CF)$ is Polish by \cite[Theorem 15.15]{Ali} and its cardinality does not exceed the cardinality of the continuum. Hence, any disjoint supported alternative $\CQ\approx\CP$ must satisfy $0<|\mf Q|\le 2^{\aleph_0}$. Apply Proposition~\ref{prop:Polish}(1) and use (1) above.
\end{proof}

The second main result complements \cite[Lemma 11 \& Theorem 6(b)]{Luschgy}. 

\begin{theorem}\label{thm:disjointsupp}
Suppose $\CP$ is of class \tn{(S)} with supported alternative $\CQ$, and that an enlargement $(\CG,\w\CP)$ completes $\Linfty$. Then the following assertions hold:
\begin{itemize}
\item[\tn{(1)}]Each $\mu\in\sca$ extends uniquely to a $\w\mu\in sca_{\mbf{\w c}}$.
\item[\tn{(2)}]Each supported alternative $\CQ\approx\CP$ extends to a supported alternative $\w\CQ:=\{\w\QW\mid \QW\in \CQ\}\approx\w\CP$.
\item[\tn{(3)}]One may assume $\CG=\CG\big(\w\CQ\big)$. More precisely, if in the situation of \tn{(2)} we consider the enlargement $(\CG(\w\CQ),\w\CQ^\sharp)$, $J_{\CG(\w\CQ)}$ is a lattice isomorphism. 
\end{itemize}
\end{theorem}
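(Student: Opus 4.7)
I would prove the three parts in the natural order; part (1) supplies the extension mechanism and parts (2), (3) build on top of it.

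For (1), the key idea is to combine Proposition~\ref{prop:(A1)-(A3)}(3), which identifies $\sca$ with $(\Linfty)^\sim_n$, with the canonical extension of order continuous functionals to a Dedekind completion. Given $\mu \in \sca_+$, the associated functional $\phi_\mu \in (\Linfty)^\sim_n$ extends to a positive order continuous functional on $L^\infty_{\mbf{\w c}}$ via
\[
\w\phi_\mu(Z) := \sup\{\phi_\mu(X) \mid X \in \Linfty,\ J_\CG(X) \tle Z\}\qquad(Z \succeq 0);
\]
the supremum is finite because $J_\CG(\Linfty)$ is majorising in $L^\infty_{\mbf{\w c}}$, and the extension mechanism is essentially the one already invoked in the proof of Proposition~\ref{prop:(A1)-(A3)}(3) via \cite[Theorem 1.84]{AliBurk}. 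A second application of Proposition~\ref{prop:(A1)-(A3)}(3) to the Banach lattice $L^\infty_{\mbf{\w c}}$ provides a unique $\w\mu \in sca_{\mbf{\w c}}$ with $\phi_{\w\mu} = \w\phi_\mu$, and evaluation on indicators shows $\w\mu(A) = \w\phi_\mu(J_\CG(\ind_A)) = \phi_\mu(\ind_A) = \mu(A)$ for $A \in \CF$. Linearity extends the construction to general $\mu \in \sca$. Uniqueness follows from the fact that any two extensions in $sca_{\mbf{\w c}}$ agree on $J_\CG(\Linfty)$, and order density of $J_\CG(\Linfty)$ together with order continuity of the associated functionals propagates agreement to all of $L^\infty_{\mbf{\w c}}$.

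For (2), part (1) yields each $\w\QW \in \w\CQ$ as an element of $sca_{\mbf{\w c}}$; it is a probability because $\w\QW(\Omega) = \QW(\Omega) = 1$ and supported because it lies in $sca_{\mbf{\w c}}$. The inclusion $\w\CQ \ll \w\CP$ is immediate from $\w\CQ \subseteq ca_{\mbf{\w c}}$. For the converse $\w\CP \ll \w\CQ$, assume toward a contradiction that some $N \in \CG$ is $\w\CQ$-polar while $\mbf{\w c}(N) > 0$. Then $\ind_N \succ 0$ in $L^\infty_{\mbf{\w c}}$, and order density of $J_\CG(\Linfty)$ supplies $X \in \Linftyplus$ with $0 \prec J_\CG(X) \tle \ind_N$; in particular $X \succ 0$ in $\Linfty$. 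The equivalence $\CQ \approx \CP$ provides $\QW \in \CQ$ with $\phi_\QW(X) > 0$, but the extension identity $\phi_{\w\QW} \circ J_\CG = \phi_\QW$ (established in~(1)) together with positivity of $\w\QW$ and the bound $J_\CG(X) \tle \ind_N$ forces $0 < \phi_\QW(X) = \phi_{\w\QW}(J_\CG(X)) \leq \w\QW(N) = 0$, a contradiction.

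For (3), I would first verify that $(\CG(\w\CQ), \w\CQ^\sharp)$ is an enlargement of $(\Omega, \CF, \CP)$: the inclusion $\CF \subseteq \CG(\w\CQ)$ is clear since $\CG(\w\CQ) \supseteq \CG \supseteq \CF$, while $\w\CQ^\sharp|_\CF \approx \CP$ follows by chaining $\w\CQ^\sharp|_\CG \approx \w\CQ$ (a generic property of completion), $\w\CQ \approx \w\CP$ (from (2)), and $\w\CP|_\CF \approx \CP$ (hypothesis on the original enlargement). For the lattice-isomorphism claim, injectivity of $J_{\CG(\w\CQ)}$ follows from the equivalence just established. The remaining task is to show that every bounded $\CG(\w\CQ)$-measurable function agrees $\w\CQ^\sharp$-q.s.\ with an $\CF$-measurable one. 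The plan is to invoke a disjoint supported alternative (Lemma~\ref{lem:disjoint:sup:alt}) so that the order supports satisfy $S(\w\QW) = S(\QW) \in \CF$, restrict a candidate $\CG(\w\CQ)$-measurable bounded $h$ to each such support (on which $h$ coincides $\w\QW$-a.s.\ with a $\CG$-measurable function by definition of $\CG(\w\CQ)$, and then with an $\CF$-measurable function via the Dedekind completion property of $L^\infty_{\mbf{\w c}}$), and finally aggregate the local representatives. The main obstacle is precisely this aggregation step: piecing local $\CF$-measurable representatives into a single global $\CF$-measurable function draws on the essential-supremum/aggregation correspondence developed in Section~\ref{sec:aggregation:appl}, which is available through the Dedekind completeness of $L^\infty_{\mbf{\w c}}$ supplied by the completing-enlargement hypothesis.
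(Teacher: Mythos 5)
Parts (1) and (2) of your proposal are correct and take essentially the same route as the paper: extend $\phi_\mu\in(\Linfty)^\sim_n=\sca$ to the Dedekind completion by the formula \eqref{eq:ext:2} from \cite[Theorem 1.84]{AliBurk}, identify the extension with an element of $sca_{\mbf{\w c}}=(L^\infty_{\mbf{\w c}})^\sim_n$ via Proposition~\ref{prop:(A1)-(A3)}(3), and deduce $\w\CQ\approx\w\CP$ from order density of $J_\CG(\Linfty)$ combined with $\CQ\approx\CP$.

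Part (3) contains a genuine gap, rooted in a misreading of what $J_{\CG(\w\CQ)}$ is. In assertion (3) the tuple $(\CG(\w\CQ),\w\CQ^\sharp)$ is to be regarded as an enlargement of $(\CG,\w\CP)$, so $J_{\CG(\w\CQ)}$ is the canonical embedding of $L^\infty_{\mbf{\w c}}$ (the $\CG$-measurable classes) into the $L^\infty$-space over $\CG(\w\CQ)$; the content is that passing from $\CG$ to $\CG(\w\CQ)$ changes nothing, which is exactly what licenses the phrase ``one may assume $\CG=\CG(\w\CQ)$''. You instead try to show that every bounded $\CG(\w\CQ)$-measurable function agrees quasi-surely with an \emph{$\CF$-measurable} one. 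That statement is false whenever $\Linfty$ is not already Dedekind complete: any bounded $\CG$-measurable $h$ whose class lies in $L^\infty_{\mbf{\w c}}\setminus J_\CG(\Linfty)$ is in particular $\CG(\w\CQ)$-measurable, and since the $\w\CQ^\sharp$-polar events in $\CG$ are precisely the $\w\CP$-polar ones (by your own part (2)), $h$ cannot agree q.s.\ with any $\CF$-measurable function. The step in your sketch that breaks is the passage from a $\CG$-measurable representative to an $\CF$-measurable one ``via the Dedekind completion property'': order density and majorisation of $J_\CG(\Linfty)$ do not give $J_\CG(\Linfty)=L^\infty_{\mbf{\w c}}$.

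The intended argument is short: apply Lemma~\ref{prop:augmentation} to the experiment $(\Omega,\CG,\w\CP)$ with supported alternative $\w\CQ$ to see that the image of $L^\infty_{\mbf{\w c}}$ is order dense and majorising in the space over $\CG(\w\CQ)$; since $L^\infty_{\mbf{\w c}}$ is Dedekind complete, \cite[Theorem 1.40]{AliBurk} upgrades this image to an ideal, and a majorising ideal containing the order unit is the whole space. Your aggregation idea would in fact deliver this surjectivity if you aim for $\CG$-measurable rather than $\CF$-measurable representatives: each bounded $\CG(\w\CQ)$-measurable $h$ coincides $\w\Q$-a.s.\ with some $\CG$-measurable $g_{\Q}$, this family is consistent, and Dedekind completeness of $L^\infty_{\mbf{\w c}}$ together with the class (S) property of $\w\CP$ makes \ref{AGR} available (Theorem~\ref{thm:perfect} and Section~\ref{sec:aggregation:appl}), producing a single $\CG$-measurable aggregator.
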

\begin{proof} To prove statement (1), note that $(\Linfty)^\sim_n$ and $(L^\infty_{\mbf{\w c}})^\sim_n$ are by \cite[Theorem 1.84]{AliBurk} and its proof lattice isomorphic via the bijection given on positive functionals $\phi\in{(\Linfty)^\sim_n}$ as in \eqref{eq:ext:2}.
By Proposition~\ref{prop:(A1)-(A3)}(3), $(\Linfty)^\sim_n=\sca$ and $(L^\infty_{\mbf{\w c}})^\sim_n=sca_{\mbf{\w c}}$.

\smallskip

For (2), the set $\w\CQ:=\{\w\Q\mid \Q\in\CQ\}$ associated with a supported alternative $\CQ\approx\CP$ is a subset of $sca_{\mbf{\w c}}$; cf.\ (1). In order to see that $\w\CQ\approx\w\CP$, use order density of $J_\CG(\Linfty)$ in $L^\infty_{\mbf{\w c}}$ to verify for each $B\in\CG$ that 
\[\sup_{\Q\in\CQ}\w\Q(B)=\sup\{\phi_\Q(X)\mid \Q\in\CQ,\,X\in\Linfty,\,J_\CG(X)\peq_\CG\ind_B\}.\]
Here, $\peq^\CG$ denotes the $\w\CP$-q.s.\ order on $L^\infty_{\w{\mbf c}}$. 
The right-hand side is positive if and only if $\ind_B\neq 0$ in $L^\infty_{\mbf{\w c}}$.

\smallskip

At last, in order to verify (3), one considers the enlargement $(\CG(\w\CQ),\w\CQ^\sharp)$ of $(\CG,\w\CP)$ and uses Lemma~\ref{prop:augmentation} to show that $\CY:=J_{\CG(\w\CQ)}(L^\infty_{\mbf{\w c}})$ is an order dense and majorising Dedekind complete sublattice of the enlarged space. By \cite[Theorem 1.40]{AliBurk}, $\CY$ is a majorising ideal, i.e.\ it has to agree with the enlarged space.
\end{proof}

\begin{remark}
Theorem~\ref{thm:disjointsupp}(3) means that {\em if} Dedekind completion of $\Linfty$ is obtained by an enlargement $(\CG,\w\CP)$ and {\em if} $\CP$ is of class (S), then $\CG$ is {\em at least as large} as the supported completion $\CS$ of $\CF$. 
Morally speaking, this suggests together with  Corollary~\ref{cor:Polish} that one should not expect $L^\infty_{\mbf c^\CH}$ to be Dedekind complete since $\CH\subsetneq\mathcal S$ is possible. The universal enlargement is therefore not a sensible choice if the aim is aggregation. Note that the class (S) assumption does not pose a severe restriction here; class (S) cannot be disproved under Dedekind completeness (Corollary~\ref{cor:example}(2)).
\end{remark}

\section{Proof of Proposition \ref{rob:bin}}\label{proof:bin}
$\CP \approx \CQ$ holds by construction, we therefore only need to show that every $\QW\in\CQ$ is supported. From the construction in Section~\ref{sec:BN}, there are stochastic kernels  $Q_t(\omega;\cdot)\in \CQ_{t}(\omega)$ for any $\omega\in \Omega_{t-1}$ and $Q_t(\omega;\cdot)\circ Y_{t}^{-1}\in \mathcal{LW}_{t}(\omega)$ such that 
$\QW=Q_1\otimes \ldots \otimes Q_{T}$.
Let $(O_n)_{n\in\N}$ be a countable basis for the topology on $(0,\infty)$ and $C_n=O_n^c$. From \cite[Lemma 4.3]{BN} we observe that the set-valued map
\[\Lambda\colon\begin{array}{l}\Omega_{t-1}\to2^{(0,\infty)},\\
\omega\mapsto\bigcap\big\{C_n\mid n\in\N,\,Q_t(\omega; Y_{t} \in C_n)=1\big\},\end{array}\]
is universally measurable and filters out the topological support of the distribution of $Y_t$ under $Q_t(\om,\cdot)$. Fix an $\omega\in\Omega_{t-1}$: By construction $Q_t(\omega;\cdot)\circ Y_{t}^{-1}\in \mathcal{L}_{t}(\omega)$, which implies $Q_t(\omega;\cdot)\circ Y_{t}^{-1}= \pi(\omega)\delta_{a(\omega)}+(1-\pi(\omega))\delta_{b(\omega)}$ 
for some $a(\omega)\in [u_{t-1}(\omega),U_{t-1}(\omega)]$ and $b(\omega)\in [d_{t-1}(\omega),D_{t-1}(\omega)]$. In particular,
$$\Lambda(\omega)=\{a(\omega),b(\omega)\}=\{m_{t-1}(\omega),M_{t-1}(\omega)\},$$
where the maps $m_{t-1}(\omega)=\min \Lambda(\omega)$, $M_{t-1}(\omega)=\max\Lambda(\omega)$ are $\CF_{t-1}$-measurable (with $\CF_{t-1}$ being the universal completion of the Borel $\sigma$-algebra $\mathcal B(\Omega_t)$). We observe that $Q_t(\omega;\cdot)\circ Y_{t}^{-1}= \tilde{\pi}(\omega)\delta_{m_{t-1}(\omega)}+(1-\tilde{\pi}(\omega))\delta_{M_{t-1}(\omega)}$.  Therefore the set $S(\QW)$ defined by
\[\{(x_0,\ldots,x_T)\in \Omega\mid \forall\,t\in \{0,\ldots,T-1\}:~Y_{t+1}(x_{t+1})=m_{t}(x_0,\ldots,x_{t})\tn{ or }Y_{t+1}(x_{t+1})=M_{t}(x_0,\ldots,x_{t})\} \]
is the candidate to be the order support of $\QW$. Indeed, setting $W_d^1(x_0,\ldots,x_T)=Y_{1}(x_{1})-m_{0}$, $W_u^1(x_0,\ldots,x_T)=Y_{1}(x_{1})-M_{0}$ and  
\[\begin{array}{l}
    W_d^t(x_0,\ldots,x_T)= Y_{t}(x_{t})-m_{t}(x_0,\ldots,x_{t-1}),\\
    W_u^t(x_1,\ldots,x_T)= Y_{t}(x_{t})-M_{t}(x_0,\ldots,x_{t-1}),
\end{array}\quad\quad(x_0,\ldots,x_T)\in\Omega,~t=2,\ldots,T.\]
all $W^t_d,W^t_u$ are $\CF$-measurable random variables, and 
\[S(\Q):=\bigcap_{t=1}^T (W_d^t)^{-1}(\{0\})\cup(W_u^t)^{-1}(\{0\})\in\CF.\] 
To show that $\QW(S(\QW))=1$ we observe that for any $\omega\in\Omega_{t-1}$ we have by construction $Q_t(\omega;A_{t})=1$ where
$A_{t}=(W_d^t)^{-1}(\{0\})\cup (W_u^t)^{-1}(\{0\})$. 
Therefore
\[
Q_t(x_1,\ldots,x_{t-1}; A_{t}) =\int_{0}^{\infty}\ind_{A_{t}}(x_1,\ldots,x_T) Q_{t}(x_1,\ldots,x_{t-1}; dx_{t}) = 1,\]
which implies 
\[\QW(A_{t})=\int_{0}^{\infty}\dots \int_{0}^{\infty}\ind_{A_{t}}(x_1,\ldots,x_T) Q_{t}(x_1,\ldots,x_{t-1}; dx_{t})\dots  Q_1(dx_1)= 1.\]
Finally we notice that for any $\QW,\QW'\in \CQ$ either $S(\QW)\equiv S(\QW')$ or $S(\QW)\cap S(\QW')=\emptyset$, as every $Y_t$ is a bijection from $(0,\infty)$ to $(0,\infty)$.

\end{document}